\documentclass[12pt, reqno]{amsart}
\usepackage{amssymb,amsthm,amsfonts,amsmath, amscd}

\usepackage{hyperref}
\usepackage{mathrsfs}

\evensidemargin0cm \oddsidemargin0cm \textwidth15cm

\newcommand\R{\mathbb R}
\newcommand\Z{\mathbb Z}
\newcommand\C{\mathbb C}
\newcommand\T{\mathbb T}
\newcommand\gt{\mathbb{GT}}

\newcommand\la{\lambda}
\newcommand\La{\Lambda}
\newcommand\epsi{\varepsilon}

\newcommand\D{\mathcal D}

\newcommand\A{E}
\newcommand\wt{\widetilde}

\newcommand\X{\mathfrak X}
\newcommand\Y{\mathfrak Y}

\newcommand\Prob{\operatorname{Prob}}
\newcommand\di{\operatorname{Dim}}
\newcommand\Ex{\operatorname{Ex}}
\newcommand\const{\operatorname{const}}

\newcommand\al{\alpha}
\newcommand\be{\beta}
\newcommand\ga{\gamma}
\newcommand\de{\delta}
\newcommand\Om{\Omega}
\newcommand\om{\omega}
\newcommand\DD{\mathbb D}
\newcommand\pd{\partial}

\newtheorem{theorem}{Theorem}[section]
\newtheorem{proposition}[theorem] {Proposition}

\newtheorem{corollary}[theorem]{Corollary}
\newtheorem{lemma}[theorem]{ Lemma}

\theoremstyle{definition}
\newtheorem{definition}[theorem]{Definition}
\newtheorem{remark}[theorem]{Remark}

\numberwithin{equation}{section}

\setcounter{secnumdepth}{2}

\begin{document}

\title[Markov processes \dots]{Markov processes on the path space of the Gelfand-Tsetlin graph
and on its boundary}

\author{Alexei Borodin}
\address{California Institute of Technology; Massachusetts Institute of Technology;
Institute for Information Transmission Problems, Russian Academy of Sciences}
\email{borodin@caltech.edu}

\author{Grigori Olshanski}
\address{Institute for Information Transmission Problems, Bolshoy
Karetny 19,  Moscow 127994, Russia; Independent University of Moscow, Russia}
\email{olsh2007@gmail.com}

\begin{abstract} We construct a four-parameter family of Markov processes on
infinite Gelfand-Tsetlin schemes that preserve the class of central (Gibbs) measures.
Any process in the family induces a Feller Markov process on the infinite-dimensional
boundary of the Gelfand-Tsetlin graph or, equivalently, the space of extreme characters
of the infinite-dimensional unitary group $U(\infty)$. The process has a unique invariant
distribution which arises as the decomposing measure in a natural problem of harmonic
analysis on $U(\infty)$ posed in \cite{Ols03}. As was shown in \cite{BO05a}, this measure
can also be described as a determinantal point process with a correlation kernel expressed
through the Gauss hypergeometric function.
\end{abstract}

\maketitle

\tableofcontents

\section{Introduction}

This work is a result of interaction of two circles of ideas. The first one
deals with a certain class of random growth models in two space dimensions
\cite{War07}, \cite{Nor10}, \cite{BF08+}, \cite{BG09}, \cite{BGR09+},
\cite{BK10} \cite{Bor10+}, while the second one addresses constructing and
analyzing stochastic dynamics on spaces of point configurations with
distinguished invariant measures that are often given by, or closely related
to, determinantal point processes \cite{BO06a}, \cite{BO06b}, \cite{BO09},
\cite{Ols10},  \cite{Ols10+}.

Our main result is a construction of a Feller Markov process that preserves the
so-called $zw$-measure on the (infinite-dimensional) space $\Omega$ of extreme
characters of the infinite-dimensional unitary group $U(\infty)$. The
four-parameter family of $zw$-measures arises naturally in a problem of
harmonic analysis on $U(\infty)$ as the decomposing measures for a distinguished
family of characters \cite{Ols03}. A $zw$-measure gives rise to a determinantal
point process on the real line with two punctures, and the corresponding
correlation kernel is given in terms of the Gauss hypergeometric function
\cite{BO05a}. Such point processes degenerate, via suitable limits and/or
specializations, to essentially all known one-dimensional determinantal
processes with correlation kernels expressible through classical special
functions.

The problem of constructing a Markov process that preserves a given
determinantal point process with infinite point configurations has been
addressed in \cite{Spo87}, \cite{KT10}, \cite{Osa09+} for the sine process, in
\cite{KT09} for the Airy process, and in \cite{Ols10+} for the Whittaker
process describing the $z$-measures from the harmonic analysis on the infinite
symmetric group.

Our approach to constructing the infinite-dimensional stochastic dynamics
differs from the ones used in previous papers. We employ the fact (of
representation theoretic origin) that the probability measures on $\Omega$ are
in one-to-one correspondence with {\it central\/} or {\it Gibbs\/} measures on
infinite Gelfand-Tsetlin schemes that can also be viewed as stepped surfaces or
lozenge tilings of a half-plane. The projections of a $zw$-measure to
suitably defined slices of the infinite schemes yield {\it orthogonal
polynomial ensembles\/} with weight functions corresponding to hypergeometric
{\it Askey-Lesky orthogonal polynomials\/}.

These orthogonal polynomials are eigenfunctions for a birth and death process
on $\Z$ with quadratic jump rates; a standard argument then shows that the
$N$-dimensional Askey-Lesky orthogonal polynomial ensemble is preserved by a
Doob's $h$-transform of $N$ independent birth and death processes.

We further show that the Markov processes on the slices are {\it consistent\/}
with respect to stochastic projections of the $N$th slice to the $(N-1)$st one
(these projections are uniquely determined by the Gibbs property). This
consistency is in no way obvious, and we do not have a conceptual explanation
for it. However, it turns out to be essentially sufficient for defining the
corresponding Markov process on $\Omega$.

We do a bit more --- using a continuous time analog of the general formalism of
\cite{BF08+} (which was based on an idea from \cite{DF90}), we construct a
Markov process on Gelfand-Tsetlin schemes that preserves the class of central
(=Gibbs) measures and that induces the same Markov process on $\Omega$.

We now proceed to a more detailed description of our work.

\subsection{Gelfand-Tsetlin graph and its boundary}
Following \cite{Wey39}, for $N\ge 1$ define a {\it signature\/} of length $N$ as an $N$-tuple of
nonincreasing integers $\la=(\la_1\ge\dots\ge \la_N)$, and denote by $\gt_N$
the set of all such signatures. Elements of $\gt_N$ parameterize irreducible
representations of $U(N)$ or $GL(N,\C)$, and they are often called {\it highest
weights\/}.

For $\la\in\gt_{N}$ and $\nu\in \gt_{N+1}$, we say that $\la\prec\nu$ if
$\nu_{j+1}\le \la_j\le\nu_j$ for all meaningful values of indices. These
inequalities are well-known to be equivalent to the condition that the
restriction of the $\nu$-representation of $U(N+1)$ to $U(N)$ contains a
$\la$-component.

Set $\gt=\bigsqcup_{N\ge 1} \gt_N$, and equip $\gt$ with edges by joining $\la$
and $\nu$ iff $\la\prec\nu$ or $\nu\prec\la$. This turns $\gt$ into a graph
that we call the {\it Gelfand-Tsetlin graph\/}. A path of length
$M\in\{1,2,\dots\}\cup\{\infty\}$ in $\gt$ is a length $M$ sequence
$$
\la^{(1)}\prec\la^{(2)}\prec\dots,\qquad \la^{(j)}\in \gt_j.
$$
Equivalently, such a path can be viewed as an array of numbers
$\bigl\{\la^{(j)}_i\bigr\}$ satisfying the inequalities $\la^{(j+1)}_{i+1}\le
\la_i^{(j)}\le \la^{(j+1)}_i$; it is also called a {\it Gelfand-Tsetlin
scheme\/}. An interpretation of paths in $\gt$ in terms of lozenge tilings or
stepped surfaces can be found in the introduction to \cite{BF08+}.

The Gelfand-Tsetlin schemes of length $N$ parameterize basis vectors in the
{\it Gelfand-Tsetlin basis\/} of the irreducible representation of $U(N)$
corresponding to $\la^{(N)}$, cf. \cite{Zhe70}. Denote by $\di_N\la$ the number of such schemes
with $\la^{(N)}=\la$; this is also the dimension of the irreducible
representation of $U(N)$ corresponding to $\la$. It is essentially equal to the
Vandermonde determinant in shifted coordinates of $\la$:
$$
\di_N(\la)=\const_N\,\prod_{1\le i<j\le N} (\la_i-i-\la_j+j).
$$

A probability measure on infinite paths in $\gt$ is called {\it central\/} (or
Gibbs) if any two finite paths with the same top end are equiprobable, cf.
\cite{Ker03}. Let $P_N$ be the projection of such a measure to
$\la^{(N)}\in\gt_N$. Centrality is easily seen to be equivalent to the relation
$\mu_N=\mu_{N+1} \Lambda^{N+1}_N$, $N\ge 1$, where $\mu_N$ and $\mu_{N+1}$ are
viewed as row-vectors with coordinates $\{\mu_N(\la)\}_{\la\in\gt_N}$ and
$\{\mu_{N+1}(\nu)\}_{\nu\in\gt_{N+1}}$, and
\begin{equation}\label{i.1}
\Lambda^{N+1}_N(\nu,\la)=\frac{\di_N(\la)}{\di_N(\nu)}\,\text{\bf
1}_{\la\prec\nu}\,,\qquad \la\in\gt_N,\ \nu\in\gt_{N+1},
\end{equation}
is the stochastic matrix of {\it cotransition probabilities\/}. There is a
one-to-one correspondence between central measures on $\gt$ and characters of
$U(\infty)$ (equivalently, equivalence classes of unitary spherical
representations of the Gelfand pair $(U(\infty)\times
U(\infty),\operatorname{diag}U(\infty))$), see \cite{Ols03}.

As shown in \cite{Ols03}, see also \cite{Voi76}, \cite{VK82}, \cite{OO98},
the space of all central probability measures is
isomorphic to the space of all probability measures on the set
$\Omega\subset\R_+^{4\infty+2}$ consisting of the sextuples
$\omega=(\alpha^+,\beta^+,\alpha^-,\beta^-,\delta^+,\delta^-)\in
\R_+^{4\infty+2}$ satisfying the conditions
\begin{gather*}
 \alpha^\pm=(\alpha_1^\pm\ge\alpha_2^\pm\ge
\dots\ge0),\quad
\beta^\pm=(\beta_1^\pm\ge\beta_2^\pm\ge \dots\ge0),\quad \delta^\pm\ge 0,\\
\sum_{i=1}^\infty(\alpha_i^\pm+\beta_i^\pm)\le \delta^\pm,\qquad
\beta_1^++\beta_1^-\le 1.
\end{gather*}

The set $\Omega$ is called the {\it boundary\/} of $\gt$; its points
parameterize the {\it extreme\/} characters of $U(\infty)$.  The map from
central measures on $\gt$ to measures on $\Omega$ amounts to certain asymptotic
relations described in Subsection \ref{Central measures} below.

\subsection{$zw$-measures}

Let $z,z',w,w'$ be four complex parameters such that
\begin{equation}\label{i.2}
(z+k)(z'+k)>0\quad\text{  and  }\quad (w+k)(w'+k)>0\quad\text{  for any  }\quad
k\in\Z
\end{equation}
and
\begin{equation}\label{i.2+}
z+z'+w+w'>-1
\end{equation}
(note that \eqref{i.2} implies that $z+z'$ and $w+w'$ are real).  For $N\ge 1$,
define a probability measure on $\gt_N$ by (below $l_i=\la_i+N-i$)
\begin{equation}\label{i.3}
M_{z,z,w,w'\mid N}(\la)=\const_N\prod_{1\le i<j\le N} (l_i-l_j)^2\prod_{i=1}^N
W_{z,z',w,w'}(l_i),
\end{equation}
where
$$
W_{z,z',w,w'}(x)=\frac1{\Gamma(z+N-x)\Gamma(z'+N-x)\Gamma(w+1+x)\Gamma(w'+1+x)}.
$$
We call it the {\it $N$th level $zw$-measure\/}. It is the $N$-point orthogonal
polynomial ensemble with weight $W(\,\cdot\,)$, see e.g. \cite{Kon05} and
references therein for general information on such ensembles.

One can show that the finite level $zw$-measures are consistent: For any $N\ge
1$, $M_{z,z',w,w'\mid N}=M_{z,z',w,w'\mid N+1}\, \Lambda^{N+1}_N$. Therefore,
the collection $\bigl\{M_{z,z',w,w'\mid N}\bigr\}_{N\ge 1}$ defines a central
measure on the paths in $\gt$ and a character of $U(\infty)$. For $z'=\bar z$,
$w'=\bar w$, this character corresponds to a remarkable substitute for the
nonexisting regular representation of $U(\infty)$, see \cite{Ols03} for
details.

The corresponding measure $M_{z,z',w,w'}$ on $\Omega$ is called the {\it
spectral} $zw$-measure. If $\omega=(\alpha^\pm,\beta^\pm,\delta^\pm)\in\Omega$
is distributed according to $M_{z,z',w,w'}$ then the random point process
generated by the coordinates
$$
\left\{\tfrac 12+\alpha_i^+,\tfrac 12-\beta_i^+, -\tfrac 12+\beta^-_i,-\tfrac
12-\alpha_i^-\right\}_{i=1}^\infty
$$
is determinantal, see \cite{BO05a}, \cite{BO05b} for details.

\subsection{Doob's transforms of $N$-fold products of birth and death processes}

It is not hard to show that the first level $zw$-measure $M_{z,z',w,w'\mid 1}$
on $\gt_1=\Z$ is the symmetrizing measure for the bilateral birth and death
process that from a point $x\in\Z$ jumps to the right with intensity
$(x-u)(x-u')$ and jumps to the left with intensity $(x+v)(x+v')$, where
$(u,u',v,v')=(z,z',w,w')$. Denote by $\D=\D_{u,u',v,v'}$ the corresponding
matrix of transition rates.

More generally, we show that the $N$th level $zw$-measure \eqref{i.3} is the
symmetrizing measure for a continuous time Markov chain on $\gt_N$ with
transition rates
\begin{multline}
\D^{(N)}(\la,\nu)=\frac{\di_N(\nu)}{\di_N(\la)}\Bigl(\D(l_1,n_1) \text{\bf
1}_{\{l_i=n_i,i\ne 1\}}+\D(l_2,n_2)\text{\bf 1}_{\{l_i=n_i,i\ne 2\}}+
\dots\\+\D(l_N,n_N)\text{\bf 1}_{\{l_i=n_i,i\ne N\}}\Bigr) -d_N\cdot\text{\bf
1}_{\la=\nu}
\end{multline}
where $l_j=\la_j+N-j$, $n_j=\nu_j+N-j$, $1\le j\le N$, $d_N$ is a suitable
constant, and we take $(u,u',v,v')=(z+N-1,z'+N-1,w,w')$ in the definition of
$\D$.

Observe that $\D^{(N)}$ can be viewed as a version of Doob's $h$-transform of
$N$ copies of the Markov chain defined by $\D$ with
$h(\,\cdot\,)=\di_N(\,\cdot\,)$. Note that in our case, $\di_N(\,\cdot\,)$ is
an eigenfunction of the corresponding matrix of transition rates with a {\it
nonzero\/} eigenvalue.

For any $N\ge 1$, let $(P_N(t))_{t\ge 0}$ be the Markov semigroup corresponding
to the matrix $\D^{(N)}$ of transition rates on $\gt_N$ (we show that
$(P_N(t))_{t\ge 0}$ is uniquely defined and it possesses the Feller property).
The key fact that we prove is the consistency (or commutativity) relation
$$
P_{N+1}(t)\Lambda^{N+1}_N=\Lambda^{N+1}_NP_N(t),\qquad t\ge 0,\quad N\ge 1.
$$

Although this relation looks natural, we have no {\it a priori\/} reason to
expect it to hold, and we verify it by a brute force computational argument.

\subsection{Main result}

We prove that for any $(z,z',w,w')\in\C^4$ subject to \eqref{i.2}-\eqref{i.2+}, there exists
a unique Markov semigroup $(P(t))_{t\ge 0}$ on $\Omega$ that preserves the
spectral $zw$-measure $M_{z,z',w,w'}$, and whose trace on $\gt_N$ coincides
with Doob's transforms $(P_N(t))_{t\ge 0}$ introduced above. Moreover, the
semigroup $(P(t))_{t\ge 0}$ is Feller (it preserves $C_0(\Omega)$, the Banach
space of continuous functions vanishing at infinity; note that the space
$\Omega$ is locally compact).

By general theory, see e.g. \cite[IV.2.7]{EK86}, this means that for any
probability measure $\mu$ on $\Omega$, there exists a Markov process on
$\Omega$ corresponding to $(P(t))_{t\ge 0}$ with initial distribution $\mu$ and
c\`adl\`ag sample paths. We also show that $M_{z,z',w,w'}$ is the unique
invariant measure for this Markov process.

\subsection{Markov process on Gelfand-Tsetlin schemes}

Via the correspondence between the probability measures on $\Omega$ and central
measures on paths in $\gt$, the semigroup $(P(t))_{t\ge 0}$ defines a Markov
evolution of central measures. It is natural to ask if there exists a Markov
process on {\it all\/} probability measures on paths in $\gt$ that agrees with
the one we have when restricted to the central measures. We construct one such
process; let us describe its transition rates.

Let $\bigl\{\la_i^{(j)}\bigr\}$ be a starting Gelfand-Tsetlin scheme. Then

\noindent $\bullet$\quad Each coordinate $\la_i^{(k)}$ tries to jump to the
right by 1 with rate
$$
(\la_i^{(k)}-i-z+1)(\la_i^{(k)}-i-z'+1)
$$
and to the left by 1 with rate
$$
(\la_i^{(k)}+k-i+w)(\la_i^{(k)}+k-i+w'),
$$
independently of other coordinates.
\smallskip

\noindent $\bullet$\quad If the $\la^{(k)}_i$-clock of the right jump rings but
$\la_i^{(k)}=\la^{(k-1)}_{i-1}$, the jump is blocked. If its left clock rings
but $\la_i^{(k)}=\la^{(k-1)}_{i}$, the jump is also blocked. (If any of the two
jumps were allowed then the resulting set of coordinates would not have
corresponded to a path in $\gt$.)
\smallskip

\noindent $\bullet$\quad If the right $\la^{(k)}_i$-clock rings and there is no
blocking, we find the greatest number $l\ge k$ such that
$\la_{i}^{(j)}=\la_i^{(k)}$ for $j=k,k+1,\dots,l$, and move all the coordinates
$\{\la_{i}^{(j)}\}_{j=k}^l$ to the right by one. Given the change
$\la^{(k)}_{i}\mapsto \la^{(k)}_{i}+1$, this is the minimal modification of
 the initial Gelfand-Tsetlin scheme that preserves interlacing.
\smallskip

\noindent $\bullet$\quad If the left $\la^{(k)}_i$-clock rings and there is no
blocking, we find the greatest number $l\ge k$ such that
$\la_{i+j-k}^{(j)}=\la_i^{(k)}$ for $j=k,k+1,\dots,l$, and move all the
coordinates $\{\la_{i+j-k}^{(j)}\}_{j=k}^l$ to the left by one. Again, given
the change $\la^{(k)}_i\mapsto \la^{(k)}_i-1$, this is the minimal modification
of the set of coordinates that preserves interlacing.

Since the update rule for each coordinate $\la_i^{(k)}$ typically depends only
on a few surrounding coordinates, one can argue that we have a model of {\it
local\/} random growth. It should be compared to the models treated in
\cite{BF08+}, \cite{BK10}, where a similar block-push mechanism was considered
with constant jumps rates, and in \cite{Bor10+}, where the jump rates were also
dependent on the location and numbering of the coordinates.

The key new feature of the Markov process above is the {\it absence of the limit
shape phenomenon\/}. Often taken for granted in local growth models, it is
simply nonexistent here.

This fact becomes more apparent if we restrict ourselves to coordinates
$\{\la^{(j)}_1\}_{j\ge 1}$ only. The evolution of this set of coordinates is
also Markov, and it represents a kind of an exclusion process. Our results
imply that this process has a unique equilibrium measure. Moreover, with
respect to this measure, the asymptotic density $\lim_{j\to\infty}
\la^{(j)}_1/j$ is well-defined and {\it random\/}. It changes over time, and
its distribution is given by a solution to the classical Painlev\'e VI (second
order nonlinear) differential equation, cf. \cite{BD02}.

\subsection{Analytic continuation viewpoint}

We have so far required the parameters $(z,z',w,w')$ to satisfy \eqref{i.2} and
\eqref{i.2+}. However, all the results would hold if \eqref{i.2} is replaced by
more general conditions, see \cite{Ols03} for a precise description, with the
only difference being that the state spaces for our Markov processes would
become smaller. In particular, if we choose
$$
z=k\in\Z_{\ge 0}, \quad z'=k+a-1, \quad w=l\in\Z_{\ge 0}, \quad w'=l+b-1,
\qquad a,\, b>0,
$$
then we have to restrict ourselves to Gelfand-Tsetlin schemes with
$-l\le\la_i^{(j)}\le k$ for all $i,j\ge 1$. As the result, there are only $k+l$
nontrivial parameters remaining on the boundary, and $(P(t))_{t\ge 0}$ turns
into a finite-dimensional diffusion with an explicit second order differential
operator as its generator. The equilibrium distribution (i.e. the spectral
$zw$-measure) becomes the $(k+l)$-point Jacobi orthogonal polynomial ensemble.
See Subsection \ref{Jacobi} for details.

One can thus think of our construction as of an {\it analytic continuation\/}
in parameters $(k,l,a,b)$ of a very well understood finite-dimensional
diffusion. This point of view can be very fruitful: In \cite{Ols10+} it was
heavily exploited in the construction and analysis of the Markov process
preserving the spectral $z$-measure arising from representation theory of the
infinite symmetric group. In that case, the starting point for analytic
continuation was the Laguerre orthogonal polynomial ensemble and the
corresponding diffusion, rather than the Jacobi one that we have here.

\subsection{Pregenerator}

As our construction of the semigroup $(P(t))_{t\ge 0}$ is fairly inexplicit, it
is tempting to look for its alternative definition, for example, via a
generator.

We were able to find a countable set of `coordinates' on $\Omega$ such that the
action of the generator of $(P(t))_{t\ge 0}$ on polynomials in these
coordinates is given by an explicit formal second order differential operator,
see Subsection \ref{Formal generator} below. However, it remains a challenge
for us to derive properties of our Markov process (or its existence) from the
resulting formula.

\subsection{Further questions}
As explained in \cite{Ols03}, the slices $\gt_N$ can be embedded into the
boundary so that as $N\to\infty$, their images form an increasingly fine grid
in $\Omega$. It is known that the $N$th level $zw$-measures weakly converge to
the spectral ones under these embeddings. It would be desirable to prove a
similar statement for the Markov semigroups.

Verifying semigroup convergence would pave the way to proving that the
equilibrium Markov process on the boundary can also be described as a
time-dependent determinantal point process. The fact that the dynamical
correlation functions are determinantal on each $\gt_N$ easily follows from
known techniques, although deriving useful formulas for the correlation kernel
is a separate task. Another possible corollary of the semigroup convergence
would be that the spectral $zw$-measure is a symmetrizing (not just an
invariant) measure for $(P(t))_{t\ge 0}$.

It seems important to continue the study of the pregenerator started in
Subsection \ref{Formal generator}. For example, it would be nice to understand
if the space of polynomials in our coordinates is a core for the generator of
the Markov process, and if not then how that space should be modified.

Another way to benefit from investigating the generator would be to obtain a proof
of the continuity of trajectories for our processes; we are only able to show
that the process has c\`adl\`ag trajectories at the moment.

All these questions and more have been settled in the case of the $z$-measures
treated in \cite{Ols10+}. Unfortunately, key features of that model are not
present here (like decomposition of the process into a one-dimensional one and
a process on a compact set, or the existence of a convenient set of functions
on the state space isomorphic to the well-studied algebra of symmetric functions),
and one would clearly need new ideas.

\subsection{Organization of the paper}
  In Section \ref{Abstract construction} we present an abstract scheme of
constructing a Markov semigroup on the boundary out of a consistent family of
semigroups on the slices. In Section \ref{Specialization} we describe how the
Gelfand-Tsetlin graph fits into this abstract scheme. Section \ref{Generalities
on Markov} is a brief collection of general facts about continuous time Markov chains on
countable spaces. Section \ref{Semigroups} provides the construction of the
Markov chains on $\gt_N$'s. In Section \ref{Commutativity} we verify the
consistency of these Markov chains. Section \ref{zw-measures} contains a brief
description of the $zw$-measures. In Section \ref{Stochastic dynamics} we
develop a general formalism of building continuous time Markov chains on paths
out of a consistent family of those on the slices. In Section \ref{Stochastic
dynamics2} we apply this formalism to our specific example and discuss the
exclusion type processes. Section \ref{Appendix} is an appendix without proofs;
it contains a description of the finite-dimensional case of integral parameters
$z$ and $w$, and an explicit formula for the generator of our
Markov process in certain coordinates.

\subsection{Acknowledgements} A.~B. was partially supported by NSF grants
DMS-0707163 and DMS-1006991. G.~O. was supported by the RFBR grant 08-01-00110,
the RFBR-CNRS grant 10-01-93114, and the project SFB 701 of Bielefeld
University.

\section{Abstract construction}\label{Abstract construction}

\subsection{Markov kernels}\label{Markov
kernels} For a more detailed exposition, see e.g. \cite[Ch. IX]{Mey66}.

Let $E$ and $E'$ be measurable spaces. A {\it Markov kernel\/} $K:E\to E'$ is a
function $K(x,A)$, where $x\in E$ and $A\subset E'$ is a measurable subset,
such that $K(x,\,\cdot\,)$ is a probability measure on $E'$ and
$K(\,\cdot\,,A)$ is a measurable function on $E$.

Let $\mathcal B(E)$ and $\mathcal B(E')$ denote the Banach spaces of
real-valued bounded measurable functions with the sup-norm on $E$ and $E'$,
respectively. A Markov kernel $K:E\to E'$ induces a linear operator $\mathcal
B(E')\to\mathcal B(E)$ of norm 1 via $(Kf)(x)=\int_{E'} K(x,dy) f(y)$.

For two Markov kernels $K_1:E\to E'$ and $K_2:E'\to E''$, their composition
$K_1\circ K_2: E\to E''$ is also a Markov kernel.

Denote by $\mathcal M(\,\cdot\,)$ the Banach space of signed measures of
bounded variation with the norm given by the total variation. Let $\mathcal
M_+(\,\cdot\,)$ be the cone of finite positive measures, and let $\mathcal
M_p(\,\cdot\,)$ be the simplex of the probability measures.

A Markov kernel $K:E\to E'$ also induces a linear operator $\mathcal
M(E)\to\mathcal M(E')$ of norm 1 via $(\mu K)(dy)=\int_{E}\mu(dx)K(x,dy)$. This
operator maps $\mathcal M_+(E)$ to $\mathcal M_+(E')$ and $\mathcal M_p(E)$ to
$\mathcal M_p(E')$. Note that $\delta_x K=K(x,\,\cdot\,)$, where $\delta_x$ is
the Dirac delta-measure at $x\in E$.

The space $\mathcal M(E)$ (and hence $\mathcal M_+(E)$ and $\mathcal M_p(E)$)
is equipped with a $\sigma$-algebra of measurable sets: Any preimage of a Borel
set under the map $\mu\mapsto \mu(A)$ from $\mathcal M(E)$ to $\R$ for any
measurable $A$ is measurable.

\subsection{Feller kernels}\label{Feller kernels}

Let $E$ and $E'$ be locally compact topological spaces with countable bases.
Let us take Borel $\sigma$-algebra as the $\sigma$-algebra of measurable sets
for both of them.

Let $C(\,\cdot\,)\subset\mathcal B(\,\cdot\,)$ be the Banach space of bounded
continuous functions, and let $C_0(\,\cdot\,)\subset C(\,\cdot\,)$ be its
subspace of functions that tend to 0 at infinity.

\begin{definition}\label{1.1}
A Markov kernel $K:E\to E'$ is called {\it Feller\/} if
the induced map $\mathcal B(E')\to \mathcal B(E)$ maps $C_0(E')$ to $C_0(E)$.
\end{definition}

Note that different authors may use different (nonequivalent) definitions for
the Feller property.

The convenience of the space $C_0(\,\cdot\,)$ is based on the fact that this
space is separable (as opposed to $C(\,\cdot\,)$ which is not separable,
except in the case when the initial topological space is compact), and $\mathcal
M(\,\cdot\,)$ is its Banach dual.

\subsection{Feller semigroups}

A {\it Markov semigroup\/} is a family of Markov kernels $P(t):E\to E$, where
$t\ge 0$, $P(0)=1$ (in the obvious sense), and $P(s)P(t)=P(s+t)$. Such a
semigroup induces a semigroup of linear operators in $\mathcal B(E)$ as well as
a semigroup of linear operators in $\mathcal M_p(E)$, see above.

We say that a Markov semigroup $(P(t))_{t\ge 0}$ is {\it Feller\/} if

\noindent $\bullet$\ $E$ is a locally compact topological space with countable
base;

\noindent $\bullet$\  the corresponding operator semigroup in $\mathcal B(E)$
preserves $C_0(E)$;

\noindent $\bullet$\  the function $t\mapsto P(t)$ is strongly continuous, i.e.
$t\mapsto P(t)f$ is a continuous map from $[0,+\infty)$ to $C_0(E)$ for any
$f\in C_0(E)$ (an equivalent condition is the continuity at $t=0$).

\subsection{Feller semigroups and Markov processes}

For more details, see e.g. \cite[IV.2.7]{EK86}.

Let $E$ be a locally compact separable metric space, and let $(P(t))_{t\ge 0}$
be a Feller semigroup on $E$. Then for each $\mu\in \mathcal M_p(E)$, there
exists a Markov process corresponding to $(P(t))_{t\ge 0}$ with initial
distribution $\mu$ and c\`adl\`ag sample paths. Moreover, this process is
strongly Markov with respect to the right-continuous version of its natural
filtration.

\subsection{Boundary}\label{Boundary}

Let $E_1,E_2,\dots$ be a sequence of measurable spaces linked by Markov kernels
$$
\Lambda_N^{N+1}:E_{N+1}\to E_N,\qquad N=1,2,\dots\,.
$$
Assume that we have another measurable space $E_\infty$ and Markov kernels
$$
\Lambda^\infty_N:E_\infty\to E_N,\qquad N=1,2,\dots,
$$
such that the natural commutativity relations hold:
\begin{equation}\label{eq1.1}
\Lambda^\infty_{N+1}\circ \Lambda^{N+1}_N=\Lambda^\infty_N,\qquad
N=1,2,\dots\,.
\end{equation}

The kernels $\Lambda_N^{N+1}$ induce the chain of maps, cf. \ref{Markov
kernels},
\begin{equation}\label{eq1.2}
\dots\to\mathcal M_p(E_{N+1})\to\mathcal M_p(E_N)\to\dots\to\mathcal
M_p(E_2)\to\mathcal M_p(E_1),
\end{equation}
and we can define the projective limit $\varprojlim \mathcal M_p(E_N)$ with
respect to these maps. By definition, it consists of sequences of measures
$(\mu_N)_{N\ge 1}$, $\mu_N\in \mathcal M_p(E_N)$, that are linked by the maps
from \eqref{eq1.2}. The space $\varprojlim \mathcal M_p(E_N)$ is
measurable; the $\sigma$-algebra of measurable sets is generated by the
cylinder sets in which $\mu_N$ must lie inside a measurable subset of $\mathcal
M_p(E_N)$, and all other coordinates $(\mu_k)_{k\ne N}$, are unrestricted.

Observe that to any $\mu_\infty\in \mathcal M_p(E_\infty)$ one can assign an
element of $\varprojlim \mathcal M_p(E_N)$ by setting $\mu_N$ equal to the
image of $\mu_\infty$ under the map $\mathcal M_p(E_\infty)\to\mathcal
M_p(E_N)$ induced by $\Lambda^\infty_N$. The commutativity relations
\eqref{eq1.1} ensure that the resulting sequence $(\mu_N)_{N\ge 1}$ is
consistent with \eqref{eq1.2}.

\begin{definition}\label{1.2}
We say that $E_\infty$ is a {\it boundary\/} of the
sequence $(E_N)_{N\ge 1}$ if the map $\mathcal M_p(E_\infty)\to \varprojlim
\mathcal M_p(E_N)$ described in the previous paragraph is a bijection and also
an isomorphism of measurable spaces.
\end{definition}

\subsection{Feller boundary}\label{Feller boundary}

In the setting of the previous subsection, let us further assume that
$(E_N)_{N\ge 1}$ and $E_\infty$ are locally compact topological spaces with
countable bases, and all the links ${(\Lambda^{N+1}_N)}_{N\ge 1}$,
${(\Lambda^{\infty}_N)}_{N\ge 1}$ are Feller kernels, cf. Subsection
\ref{Feller kernels}. Then if $E_\infty$ satisfies Definition \ref{1.2}, we
shall call it the {\it Feller boundary\/} for $(E_N)_{N\ge 1}$.

According to Subsection \ref{Feller kernels}, the links
${(\Lambda^{\infty}_N)}_{N\ge 1}$ induce linear operators $C_0(E_N)\to
C_0(E_\infty)$.

\begin{lemma}\label{1.3}
The union of images of these maps over all $N\ge 1$ is
dense in the Banach space $C_0(E_\infty)$.
\end{lemma}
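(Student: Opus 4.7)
The plan is to argue by duality: let $V \subset C_0(E_\infty)$ denote the closed linear span of the images $\Lambda^\infty_N(C_0(E_N))$ over all $N\ge 1$, and suppose for contradiction that $V \neq C_0(E_\infty)$. By Hahn--Banach combined with the Riesz representation theorem (which identifies the dual of $C_0(E_\infty)$ with the space $\mathcal M(E_\infty)$ of finite signed Borel measures), there exists a nonzero $\mu \in \mathcal M(E_\infty)$ with
\[
\int_{E_\infty}(\Lambda^\infty_N f)(\omega)\, \mu(d\omega) = 0 \qquad\text{for every } N\ge 1,\ f\in C_0(E_N).
\]

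Next I would rewrite the integral as $\int_{E_N} f\, d(\mu\Lambda^\infty_N)$, where $\mu \Lambda^\infty_N \in \mathcal M(E_N)$ is the signed pushforward (of bounded variation since $\Lambda^\infty_N$ is Markov). Because $E_N$ is locally compact with countable base and $C_0(E_N)$ is norm-dense in the predual of $\mathcal M(E_N)$, the vanishing of all such integrals forces $\mu \Lambda^\infty_N = 0$ for every $N$.

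Now write the Jordan decomposition $\mu = \mu^+ - \mu^-$ into mutually singular nonnegative measures, so $\mu^+\Lambda^\infty_N = \mu^-\Lambda^\infty_N$ for every $N$. Since each $\Lambda^\infty_N$ is a Markov kernel it preserves total mass, so $\|\mu^+\| = \|\mu^-\|$; call this common value $c$, which is strictly positive because $\mu\neq 0$. Setting $\nu^\pm = c^{-1}\mu^\pm \in \mathcal M_p(E_\infty)$, we obtain two probability measures whose pushforwards to $\mathcal M_p(E_N)$ coincide for all $N$. The commutativity relations \eqref{eq1.1} guarantee that both sequences $(\nu^\pm\Lambda^\infty_N)_{N\ge 1}$ lie in $\varprojlim \mathcal M_p(E_N)$ and represent the same element. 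The Feller boundary hypothesis (Definition \ref{1.2}) makes the map $\mathcal M_p(E_\infty)\to\varprojlim\mathcal M_p(E_N)$ a bijection, forcing $\nu^+=\nu^-$, hence $\mu^+ = \mu^-$; combined with mutual singularity this gives $\mu^+ = \mu^- = 0$, contradicting $\mu\neq 0$.

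The only delicate point is the passage from vanishing against $C_0(E_N)$ to vanishing of $\mu\Lambda^\infty_N$ as a signed measure; everything else is a clean bookkeeping of the boundary axiom against the Jordan decomposition. I would not attempt a Stone--Weierstrass style argument since the union of images need not be an algebra, whereas the duality route uses only what is already built into Definition \ref{1.2}.
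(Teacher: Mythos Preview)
Your proof is correct and follows essentially the same duality argument as the paper: both annihilate a signed measure $\mu$ by pushing it forward through the links, splitting it into positive parts, equating total masses, and invoking the bijectivity in Definition~\ref{1.2}. The only cosmetic difference is that you use the Jordan decomposition and mutual singularity to force $\mu^+=\mu^-=0$, whereas the paper writes $\mu=\alpha\mu'-\beta\mu''$ with probability measures and concludes directly from $\alpha=\beta$, $\mu'=\mu''$; your remark about $C_0(E_N)$ being ``dense in the predual'' is slightly misphrased (it \emph{is} the predual by Riesz), but the step is not actually delicate.
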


\begin{proof} Since $\mathcal M(E_\infty)$ is the Banach dual to $C_0(E_\infty)$, it
suffices to verify that if $\mu\in \mathcal M(E_\infty)$ kills all functions in
our union then $\mu=0$.

Assume $\mu$ is a signed measure on $E_\infty$ that kills the image of
$C_0(E_N)$, $N\ge 1$. This is equivalent to saying that $\mu K_N^\infty=0$ for
all $N\ge 1$. We can represent $\mu$ as difference of finite positive measures
$$
M=\alpha \mu'-\beta \mu'',\qquad \mu',\mu''\in \mathcal M_p(E_\infty),\quad
\alpha,\beta\ge 0.
$$
Hence, $\alpha \mu' K_N^\infty=\beta \mu'' K_N^\infty$ for all $N\ge 1$. Since
$\mu' K_N^\infty$ and $\mu'' K_N^\infty$ are in $\mathcal M_p(E_N)$, we must
have $\alpha=\beta$, and $\mu' K_N^\infty=\mu'' K_N^\infty$. Definition
\ref{1.2} implies $\mu'=\mu''$, thus $\mu=0$.
\end{proof}

\subsection{Extension of semigroups to the boundary}

In the setting of Subsection \ref{Boundary}, assume that for any $N\ge 1$, we
have a Markov semigroup $(P_N(t))_{t\ge 0}$ on $E_N$, and these semigroups are
compatible with the links:
\begin{equation}\label{eq1.3}
P_{N+1}(t)\circ\Lambda^{N+1}_{N}=\Lambda^{N+1}_{N}\circ P_{N}(t),\qquad t\ge 0,
\quad N=1,2,\dots\,.
\end{equation}

\begin{proposition}\label{1.4}
In the above assumptions, there exists a unique
Markov semigroup $P(t)$ on $E_\infty$ such that
\begin{equation}\label{eq1.4}
P(t)\circ \Lambda_N^\infty=\Lambda^\infty_N\circ P_N(t),\qquad t\ge 0,\quad
N=1,2,\dots\,.
\end{equation}

If $E_\infty$ is Feller (cf. Subsection \ref{Feller boundary}) and
$(P_N(t))_{t\ge 0}$ is a Feller semigroup for any $N\ge 1$, then $(P(t))_{t\ge
0}$ is also a Feller semigroup.
\end{proposition}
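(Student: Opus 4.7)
The plan is to construct $P(t)$ first on the level of measures, using the boundary identification of Definition \ref{1.2} as an isomorphism of measurable spaces, and then recover the kernel by evaluating on Dirac masses. Given $\mu\in\mathcal M_p(E_\infty)$ and $t\ge0$, set $\nu_N(t):=\mu\,\Lambda^\infty_N P_N(t)\in\mathcal M_p(E_N)$. Using \eqref{eq1.1} and \eqref{eq1.3},
$$\nu_{N+1}(t)\,\Lambda^{N+1}_N = \mu\,\Lambda^\infty_{N+1}P_{N+1}(t)\Lambda^{N+1}_N = \mu\,\Lambda^\infty_{N+1}\Lambda^{N+1}_N P_N(t) = \mu\,\Lambda^\infty_N P_N(t)=\nu_N(t),$$
so $(\nu_N(t))_{N\ge1}\in\varprojlim\mathcal M_p(E_N)$. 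By Definition \ref{1.2} this sequence lifts to a unique $T_t\mu\in\mathcal M_p(E_\infty)$ characterized by $(T_t\mu)\Lambda^\infty_N=\mu\,\Lambda^\infty_N P_N(t)$ for every $N$. Specializing $\mu=\delta_x$ and setting $P(t)(x,\,\cdot\,):=T_t\delta_x$ produces the candidate Markov kernel; its measurability in $x$ holds because each map $x\mapsto\delta_x\Lambda^\infty_N P_N(t)$ is measurable into $\mathcal M_p(E_N)$, and the measurable isomorphism from Definition \ref{1.2} transports joint coordinatewise measurability back to measurability of $x\mapsto T_t\delta_x$. The identity \eqref{eq1.4} is built into the construction, and both the semigroup relation and the uniqueness of $P(t)$ reduce to the observation that $\mu P(s)P(t)$ and $\mu P(s+t)$ have the same projection $\mu\,\Lambda^\infty_N P_N(s+t)$ onto each $\mathcal M_p(E_N)$, forcing equality via Definition \ref{1.2}.

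For the Feller part, I read \eqref{eq1.4} at the level of functions: for any $g\in C_0(E_N)$, the function $f:=\Lambda^\infty_N g$ lies in $C_0(E_\infty)$ because $\Lambda^\infty_N$ is a Feller kernel, and
$$P(t)f = P(t)\Lambda^\infty_N g = \Lambda^\infty_N P_N(t)g,$$
which lies in $C_0(E_\infty)$ again by the Feller properties of $P_N(t)$ and $\Lambda^\infty_N$. By Lemma \ref{1.3} such $f$ span a dense subspace of $C_0(E_\infty)$, and since $P(t)$ is a contraction on $\mathcal B(E_\infty)$ it therefore preserves $C_0(E_\infty)$. Strong continuity is checked first on the dense set: $\|P(t)f-f\|=\|\Lambda^\infty_N(P_N(t)g-g)\|\le\|P_N(t)g-g\|\to 0$ as $t\to 0$ by the Feller property of $P_N(t)$, and then extended to all of $C_0(E_\infty)$ by a standard $3\varepsilon$ argument using the contractivity of each $P(t)$.

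The one step that is not a formal diagram chase is the measurability of $x\mapsto P(t)(x,\,\cdot\,)$, and this is precisely where one uses that Definition \ref{1.2} demands a measurable isomorphism $\mathcal M_p(E_\infty)\cong\varprojlim\mathcal M_p(E_N)$ rather than only a bijection. Everything else --- the intertwining \eqref{eq1.4}, the semigroup identity, uniqueness, and both Feller properties --- is a clean consequence of \eqref{eq1.1}, \eqref{eq1.3}, Lemma \ref{1.3}, and the fact that Markov kernels are contractions on $\mathcal B$ and on $\mathcal M$.
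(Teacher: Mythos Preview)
Your proof is correct and follows essentially the same route as the paper's: construct $P(t;x,\cdot)$ by lifting the compatible sequence $(\delta_x\Lambda^\infty_N P_N(t))_{N\ge1}$ through the boundary isomorphism, invoke that isomorphism's measurability for the kernel property, check the semigroup relation and uniqueness on projections, and establish the Feller property on the dense set supplied by Lemma \ref{1.3}. The only cosmetic difference is that you phrase the construction for general $\mu\in\mathcal M_p(E_\infty)$ before specializing to $\delta_x$, whereas the paper works with $\delta_x$ from the start.
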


\begin{proof} Denote by $\delta_x$ the delta-measure at a point $x\in E_\infty$.
To construct the semigroup $(P(t))_{t\ge 0}$, we need to define, for any $t\ge
0$, a probability measure $P(t;x,\,\cdot\,)$ on $E_\infty$. This measure has to
satisfy
$$
P(t;x,\,\cdot\,)\Lambda^\infty_N=\delta_x(\Lambda^\infty_N\circ P_N(t)),\qquad
N\ge 1.
$$
The right-hand side defines a sequence of probability measures on $E_N$'s, and
\eqref{eq1.1}, \eqref{eq1.3} immediately imply that these measures are
compatible with maps \eqref{eq1.2}. Hence, we obtain an element of $\varprojlim
\mathcal M_p(E_N)$, which defines, by definition of the boundary, a probability
measure on $E_\infty$. The dependence of this measure on $x$ is measurable
since this is true for any of its coordinates.

Thus, we have obtained a Markov kernel $P(t)$ which satisfies
$$
\delta_x (P(t)\circ \Lambda_N^\infty)=\delta_x(\Lambda_N^\infty\circ
P_N(t)),\qquad N\ge 1,
$$
which is equivalent to \eqref{eq1.4}.

To verify the semigroup property (Chapman-Kolmogorov equation) for
$(P(t))_{t\ge 0}$ it suffices to check that
$$
(P(s)\circ P(t))\circ\Lambda^\infty_N=P(s+t)\circ\Lambda^\infty_N,\qquad s,t\ge
0,\quad N\ge 1,
$$
and this immediately follows from \eqref{eq1.4} and the corresponding relation
for $(P_N(t))_{t\ge 0}$.

The uniqueness is obvious since $P(t)$ is uniquely determined by
$(P(t)\circ\Lambda^\infty_N)_{N\ge 1}$ that are given \eqref{eq1.4}.

Finally, let us prove the Feller property assuming that the boundary is Feller
and all $(P_N(t))_{t\ge 0}$ are Feller.

We need to show that for $f\in C_0(E_\infty)$ we have $P(t)f\in C_0(E_\infty)$,
and that $P(t)f$ is continuous in $t$ in the topology of $C_0(E_\infty)$. Both
properties can be verified on a dense subset. Lemma \ref{1.3} then shows that
it suffices to consider $f$ of the form $f=\Lambda_N^\infty f_N$ with $f_N\in
C_0(E_N)$. By \eqref{eq1.4}
$$
P(t)f=P(t)(\Lambda_N^\infty f_N)=\Lambda^\infty_N(P_N(t) f_N),
$$
which is in $C_0(E_\infty)$ because $\Lambda^\infty_N$ and $P_N(t)$ are Feller.
The continuity in $t$ is obvious as $P_N(t)f_N$ is continuous in $t$, and
$\Lambda^\infty_N:C_0(E_N)\to C_0(E_\infty)$ is a contraction.
\end{proof}

It is worth noting that our definition of the semigroup $P(t)$ is
nonconstructive: We are not able to describe $P(t;x,A)$ explicitly, and we have
to appeal to the isomorphism in Definition \ref{1.2} instead. Thus, the
difficulty in making $P(t)$ explicit is hidden in the implicit nature of that
isomorphism.

\subsection{Invariant measures}\label{Invariant measures}

In the setting of Subsection \ref{Boundary}, assume that for any $N\ge 1$,
there exists $\mu_N\in \mathcal M_p(E_N)$ such that $\mu_NP_N(t)=\mu_N$ (i.e.,
$\mu_N$ is an invariant measure for $(P_N(t))_{t\ge 0}$). If we assume that
$\mu_N$'s are compatible with the links,
$$
\mu_{N+1}\Lambda^{N+1}_N=\mu_N,\qquad N\ge 1,
$$
then, via Definition \ref{1.2}, they yield a measure $\mu\in\mathcal
M_p(E_\infty)$ such that $\mu\Lambda^\infty_N=\mu_N$ for any $N\ge 1$. Note
that $\mu$ is uniquely determined by its coordinates.

One easily sees that $\mu$ is invariant with respect to $(P(t))_{t\ge 0}$.
Indeed,
$$
(\mu P(t))\Lambda^\infty_N=(\mu \Lambda^\infty_N)P_N(t)=\mu_N P_N(t)=\mu_N=\mu
\Lambda^\infty_N.
$$
Moreover, if $\mu_N$ is a unique invariant measure for $(P_N(t))_{t\ge 0}$ for
any $N\ge 1$ then the invariant measure for $(P(t))_{t\ge 0}$ is unique too as
its convolution with $\Lambda^\infty_N$ must coincide with $\mu_N$.

\section{Specialization. Gelfand-Tsetlin graph}\label{Specialization}

\subsection{Spaces and links}\label{Spaces and links}

Let $N$ be a positive integer. A {\it signature\/} $\la$ of length $N$ is an
$N$-tuple of weakly decreasing integers: $\la=(\la_1\ge\dots\ge\la_N)\in \Z^N$.
Denote by $\gt_N$ the set of all signatures of length $N$ (the notation
$\gt$ is explained below). This countable set will serve as our space $E_N$
from the previous section.

Signatures of length $N$ parameterize irreducible representations of the
unitary group $U(N)$ and are often referred to as {\it highest weights\/}, cf.
\cite{Wey39}, \cite{Zhe70}. For $\la\in\gt_N$ denote the corresponding
representation by $\pi_\la$, and denote by $\di_N\lambda$ the dimension of the
corresponding linear space. It is well known that
$$
\di_N\lambda=\frac{\prod_{1\le i<j\le N}(\la_i-i-\la_j+j)}{\prod_{i=1}^{N-1}
i!},\qquad \la\in\gt_N.
$$

Define a matrix
$\bigl[\La^{N+1}_{N}(\la,\nu)\bigr]_{\la\in\gt_{N+1},\,\nu\in\gt_{N}}$ with
rows parameterized by $\gt_{N+1}$ and columns parameterized by $\gt_{N}$ via
$$
\La^{N+1}_{N}(\la,\nu)=\begin{cases} N!\cdot\dfrac{\prod_{1\le i<j\le
N}(\nu_i-i-\nu_j+j)}{\prod_{1\le i<j\le
N+1}(\la_i-i-\la_j+j)}\,,&\text{if  } \nu\prec\la,\\
0,&\text{otherwise},
\end{cases}
$$
where the notation $\nu\prec\la$ stands for interlacing:
$$
\nu\prec\la\quad
\Longleftrightarrow\quad\la_1\ge\nu_1\ge\la_2\ge\nu_2\ge\dots\ge\nu_{N}\ge\la_{N+1}.
$$
Note that the nonzero entries of $\La^{N+1}_N$ can also be written in the form
\begin{equation}\label{eq2.0}
\La^{N+1}_N(\la,\nu)=\frac{\di_N\nu}{\di_{N+1}\la}\,.
\end{equation}

It is not hard to show that $\Lambda^{N+1}_{N}$ is a stochastic matrix:
$\sum_{\nu\in\gt_{N}}\La^{N+1}_{N}(\la,\nu)=1$ for any $\la\in\gt_{N+1}$.
Indeed, $\di_{N+1}\la$  is equal to the number of the sequences (known as {\it
Gelfand-Tsetlin schemes\/}, thus the notation $\gt$)
$$
\la^{(1)}\prec\la^{(2)}\prec\dots\prec\lambda^{(N+1)}=\lambda,
\qquad\la^{(j)}\in\gt_j,
$$
and $\Lambda^{N+1}_{N}(\la,\nu)$ is the fraction of the sequences with
$\la^{(N)}=\nu$. The stochasticity also follows from the branching rule for the
representations of unitary groups: For any $\la\in\gt_{N+1}$,
$$
\pi_\la\vert_{U(N)}\sim \bigoplus_{\nu\in\gt_N:\,\nu\prec\la}\pi_\nu.
$$

The matrices $\Lambda^{N+1}_N$ viewed as Markov kernels
$\Lambda^{N+1}_N:\gt_{N+1}\to\gt_N$ are our links, cf. Subsection
\ref{Boundary}. Set $\gt=\bigsqcup_{N\ge 1} \gt_N$. We endow $\gt$ with the
structure of a graph: Two vertices $\lambda$ and $\nu$ are joined by an edge if
and only if $\nu\prec\lambda$ or $\lambda\prec\nu$. This graph is called the
{\it Gelfand-Tsetlin graph\/}, and the matrix elements of the links are often
called {\it cotransition probabilities\/} for this graph, cf. \cite{Ker03}.

\subsection{Boundary}

let $\R_+\subset\R$ be the set of nonnegative real numbers and $\R_+^\infty$ be
the product of countably many copies of $\R_+$. Consider the space
$$
\R_+^{4\infty+2}
:=\R^\infty_+\times\R^\infty_+\times\R^\infty_+\times\R^\infty_+\times\R_+\times\R_+
$$
and equip it with the product topology. We choose $E_\infty$ to be the closed
subset $\Omega\subset\R_+^{4\infty+2}$ consisting of the sextuples
$$
\omega=(\alpha^+,\beta^+,\alpha^-,\beta^-,\delta^+,\delta^-)\in
\R_+^{4\infty+2}
$$
satisfying the conditions
$$
\gathered
 \alpha^\pm=(\alpha_1^\pm\ge\alpha_2^\pm\ge
\dots),\quad
\beta^\pm=(\beta_1^\pm\ge\beta_2^\pm\ge \dots),\quad \delta^\pm\ge 0,\\
\sum_{i=1}^\infty(\alpha_i^\pm+\beta_i^\pm)\le \delta^\pm,\qquad
\beta_1^++\beta_1^-\le 1.
\endgathered
$$
One easily sees that $\Omega$ is a locally compact metrizable topological space
with a countable base. We endow $\Omega$ with the corresponding Borel
structure which makes $\Omega$ a measurable space.

It will be convenient to use the notation
$$
\gamma^\pm=\delta^\pm-\sum_{i=1}^\infty(\alpha_i^\pm+\beta_i^\pm)\ge 0.
$$

Define the projections/links $\Lambda^\infty_N:\Omega\to \gt_N$, $N\ge 1$, by
\begin{equation}\label{eq2.1}
\Lambda^\infty_N(\omega,\la)=\di_N\la\cdot
\det\bigl[\varphi_{\la_i-i+j}\bigr]_{i,j=1}^N,\qquad \omega\in\Omega,\quad
\la\in\gt_N,
\end{equation}
where $\{\varphi_n\}_{n=-\infty}^{+\infty}$ are the Laurent coefficients of the
function ($|u|=1$)
\begin{equation}\label{eq2.2}
\Phi_\omega(u):=e^{\gamma^+(u-1)+\gamma^-(u^{-1}-1)}\prod_{i=1}^\infty
\frac{1+\beta_i^+(u-1)}{1-\alpha_i^+(u-1)}\,
\frac{1+\beta_i^-(u^{-1}-1)}{1-\alpha_i^-(u^{-1}-1)}=\sum_{n=-\infty}^{+\infty}
\varphi_nu^n.
\end{equation}

\begin{theorem}\label{2.1}
The space $E_\infty=\Omega$ is the boundary of the chain of spaces
$(E_N=\gt_N)_{N\ge 1}$ with links as above in the sense of Definition
\ref{1.2}.
\end{theorem}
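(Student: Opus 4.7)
The plan is to deduce Theorem \ref{2.1} from the classical description of the minimal boundary of the Gelfand-Tsetlin graph due to Voiculescu, with rigorous proofs supplied by Vershik-Kerov and Okounkov-Olshanski (see \cite{Voi76}, \cite{VK82}, \cite{OO98}, \cite{Ols03}). The main work is to match the abstract framework of Section \ref{Abstract construction} with the concrete object of classical harmonic analysis on $U(\infty)$, so that no new analytic result is needed beyond what is already in those references.

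\textbf{Step 1 (reformulation).} An element of $\varprojlim \mathcal{M}_p(\gt_N)$ is, by definition, a coherent system $(\mu_N)_{N\ge 1}$ satisfying $\mu_{N+1}\La^{N+1}_N=\mu_N$. Because the cotransition matrix has the form \eqref{eq2.0}, coherence is equivalent to the statement that $(\mu_N)$ is the sequence of one-dimensional marginals of a unique \emph{central} probability measure on the space of infinite paths in $\gt$ (apply Kolmogorov extension to the kernels $\La^{N+1}_N$). Central measures are in turn in bijection with normalized characters of $U(\infty)$, and the extreme points of this character simplex are precisely parameterized by points of $\Omega$, the function $u\mapsto\Phi_\om(u)$ in \eqref{eq2.2} being the value of the extreme character $\chi^\om$ on the diagonal matrix $\operatorname{diag}(u,1,1,\dots)$.

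\textbf{Step 2 (identification of \eqref{eq2.1}).} I would check that for each $\om\in\Om$, the expression $\mu^\om_N(\la):=\La^\infty_N(\om,\la)$ is exactly the probability of observing $\la^{(N)}=\la$ in the central measure attached to $\chi^\om$. This is a Weyl-type calculation: applying the Weyl character formula to $\chi^\om|_{U(N)}$ on $\operatorname{diag}(u_1,\dots,u_N)$ and expanding the resulting symmetric function into Schur polynomials, one arrives at a Jacobi-Trudi determinant with entries the Laurent coefficients $\varphi_n$ of $\Phi_\om$; dividing by $\di_N\la$ is the normalization that turns multiplicity into probability. The relation $\La^\infty_{N+1}\circ\La^{N+1}_N=\La^\infty_N$ and the stochasticity of $\La^\infty_N$ then follow from the coherence of $(\mu^\om_N)_{N\ge 1}$, which is the branching rule for $\chi^\om$.

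\textbf{Step 3 (bijection and measurability).} With Steps 1--2 in place, the map $\mathcal{M}_p(\Om)\to\varprojlim\mathcal{M}_p(\gt_N)$ of Subsection \ref{Boundary} becomes the assignment $\mu\mapsto\bigl(\int_\Om \mu^\om_N\,d\mu(\om)\bigr)_{N\ge 1}$, that is, the ergodic decomposition map from probability measures on the extreme boundary to the simplex of characters. Bijectivity is exactly the uniqueness and existence of the ergodic decomposition, as established in the cited works. For the isomorphism of measurable spaces, each generating coordinate $\mu\mapsto\mu_N(\la)$ is a Borel functional because $\om\mapsto\La^\infty_N(\om,\la)$ is continuous on the Polish space $\Om$ (the Laurent coefficients $\varphi_n$ depend continuously on $\om$ by \eqref{eq2.2}). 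The inverse direction is handled by the standard Borel isomorphism theorem combined with the explicit Borel recovery of the coordinates $\al^\pm_i,\be^\pm_i,\de^\pm$ from the sequence $(\mu^\om_N)$ via Vershik-Kerov's asymptotics of ratios of dimensions.

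The genuinely hard part---extremality of each $\mu^\om_\bullet$ and the exhaustiveness of the family $\{\chi^\om:\om\in\Om\}$---is the Voiculescu-Edrei classification and will only be invoked as a black box. The rest is bookkeeping, together with a short separate verification that each $\La^\infty_N$ is a Feller kernel (needed to treat $\Om$ as a \emph{Feller} boundary in Subsection \ref{Feller boundary}); this reduces to continuity of $\om\mapsto\varphi_n$ and to decay of $\di_N\la\cdot\det[\varphi_{\la_i-i+j}]$ as $\la\to\infty$ in $\gt_N$, which follows from the integrability built into the definition of $\Om$.
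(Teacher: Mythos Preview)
Your proposal is correct and follows essentially the same route as the paper's proof: both reduce the statement to the classification of extreme characters of $U(\infty)$ (Voiculescu--Edrei, as made rigorous in \cite{Ols03}), identify the kernels $\La^\infty_N$ with the coherent system attached to $\chi^\om$ via the Weyl/Jacobi--Trudi computation of \cite{Voi76}, and then handle the Borel-isomorphism part by combining continuity of $\om\mapsto\La^\infty_N(\om,\la)$ with an abstract standard-Borel argument. The only cosmetic difference is that the paper invokes Mackey's theorem (\cite{Mack57}, Theorem~3.2) directly for the inverse measurability, rather than appealing to the Vershik--Kerov asymptotics; also, your closing remarks about the Feller property belong to Proposition~\ref{2.3}, not to Theorem~\ref{2.1} itself.
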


\begin{proof} This result is essentially proved in \S9 of \cite{Ols03}; we
provide below some necessary additional comments.

Let us abbreviate $\Delta_N=\mathcal M_p(\gt_N)$ and $\Delta=\varprojlim
\mathcal M_p(\gt_N)=\varprojlim\Delta_N$. As in \cite{Ols03} we embed $\Delta$
into the vector space of all real-valued functions on the set of vertices of
the Gelfand--Tsetlin graph. That space is endowed with the topology of
pointwise convergence, and $\Delta$ inherits this topology. The measurable
structure of $\Delta$ is the Borel structure corresponding to this topology.

Obviously, $\Delta$ is a convex set; let $\Ex\Delta\subset\Delta$ denote the
subset of extreme points. Theorem 9.2 in \cite{Ols03} (which is based on
Choquet's theorem) says that $\Ex\Delta$ is a Borel subset of $\Delta$, and each
point of $\Delta$ is uniquely representable by a probability Borel measure
concentrated on $\Ex\Delta$. On the other hand, it is readily seen that,
conversely, any probability Borel measure on $\Delta$ (in particular, on
$\Ex\Delta$) represents a point of $\Delta$, the barycenter of that measure.
This gives us a bijection between $\mathcal M_p(\Ex\Delta)$ and $\Delta$.

The next step consists in identifying the abstract set $\Ex\Delta$ with the
concrete space $\Omega$. This is achieved with the help of Theorem 1.3 in
\cite{Ols03}. Namely, as is pointed out in the proof of Theorem 9.1 in
\cite{Ols03}, there is a natural one-to-one correspondence between the points
of $\Ex\Delta$ and the extreme characters of the infinite-dimensional unitary
group $U(\infty)$, which in turn are parameterized by the points of the space
$\Omega$, see Theorem 1.3 in \cite{Ols03}.

Then we have to verify that the embedding $\Omega\to\Delta$ induced by the
identification $\Omega=\Ex\Delta$ is given by the kernels $\La^\infty_N$. This
is shown by the computation in \cite{Voi76}.

We have thus constructed a bijective map $\mathcal M_p(\Omega)\to\Delta$, and
it remains to prove that it is a Borel isomorphism. As shown in the proof of
Theorem 8.1 of \cite{Ols03}, the map $\omega\mapsto\La^\infty_N(\omega,\la)$ is
continuous for every $N=1,2,\dots$ and every $\la\in\gt_N$. This implies that
the map $\mathcal M_p(\Omega)\to\Delta$ is Borel. To show that the inverse map
is also Borel one can apply an abstract result (Theorem 3.2 in \cite{Mack57}),
which asserts that a Borel one-to-one map of a standard Borel space onto a
subset of a countably generated Borel space is a Borel isomorphism. This result
is applicable in our situation, since the Borel structure of $\Omega$ is
standard, so that the induced Borel structure on $\mathcal M_p(\Omega)$ is
standard, too.

\end{proof}

\begin{remark}\label{2.0}
Observe that the maps on $(\gt_N)_{N\ge 1}$ consisting of
shifts of all coordinates of signatures by 1,
$$
\la=(\la_1,\dots,\la_N)\mapsto
\wt\la=(\wt\la_1=\la_1+1,\dots,\wt\la_N=\la_N+1),
$$
leave the links intact: $\La^{N+1}_N(\la,\nu)=\La^{N+1}_N(\wt\la,\wt\nu)$.
There is also a corresponding homeomorphism of $\Omega$, which amounts to the
multiplication of the function $\Phi_\omega(u)$ by $u$: For
$\omega=(\alpha^\pm,\beta^\pm,\delta^\pm)\in \Omega$ define
$\wt\omega=(\wt\alpha^\pm,\wt\beta^\pm,\wt\delta^\pm)\in\Omega$ by
$$
\gathered
\wt\alpha^\pm=\alpha^\pm,\qquad \wt\delta^\pm=\delta^\pm, \\
\wt\beta_1^+=1-\beta^-_1,\quad (\wt\beta_2^+,\wt\beta_3^+,\dots)=
(\beta_1^+,\beta_2^+,\dots),\quad (\wt\beta_1^-,\wt\beta_2^-,\dots)=
(\beta_2^-,\beta_3^-,\dots)
\endgathered
$$
(note that $\wt\beta^+_1\ge\wt\beta^+_2$ because $\beta^+_1+\beta^-_1\le1$).
Then \eqref{eq2.1} and the relation
$$
u(1+\beta_1^-(u^{-1}-1))=1+(1-\beta_1^-)(u-1)
$$
show that $\Lambda^\infty_N(\omega,\la)=\Lambda^\infty_N(\wt\omega,\wt\la)$ for
any $\la\in\gt_N$ and $N\ge 1$.

This automorphism of the Gelfand-Tsetlin graph and its boundary has a
representation theoretic origin, cf. Remark 1.5 in \cite{Ols03} and Remark 3.7
in \cite{BO05a}.
\end{remark}

\subsection{The boundary is Feller}

Following definitions of Subsection \ref{Feller boundary}, in order to show
that $E_\infty=\Omega$ is a Feller boundary of the chain $(E_N=\gt_N)_{N\ge 1}$
we need to verify two statements:

\noindent$\bullet$\quad the spaces $(E_N)_{N\ge 1}$ and $E_\infty$ are locally
compact topological spaces with countable bases;

\noindent$\bullet$\quad the links $(\Lambda^{N+1}_N)_{N\ge 1}$ and
$(\Lambda^\infty_N)_{N\ge 1}$ are Feller kernels.

The first statement is obvious from the definitions. The goal of this
subsection is to prove the second one.

\begin{proposition}\label{2.2}
For any $N\ge 1$, the linear operator $\mathcal B(\gt_N)\to \mathcal
B(\gt_{N+1})$ induced by the Markov kernel $\Lambda^{N+1}_N$ maps $C_0(\gt_N)$
to $C_0(\gt_{N+1})$.
\end{proposition}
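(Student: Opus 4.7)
The plan is to exploit the fact that $\gt_N$ is a discrete countable space, so $C_0(\gt_N)$ is the sup-norm closure in $\mathcal B(\gt_N)$ of the linear span of delta functions $\{\delta_\nu\}_{\nu\in\gt_N}$. Since the operator induced by $\Lambda^{N+1}_N$ is a contraction on $\mathcal B$ (the kernel is stochastic) and $C_0(\gt_{N+1})$ is closed in $\mathcal B(\gt_{N+1})$, it suffices to check that for each fixed $\nu\in\gt_N$, the function
$$
\la\mapsto (\Lambda^{N+1}_N\delta_\nu)(\la)=\Lambda^{N+1}_N(\la,\nu)
$$
belongs to $C_0(\gt_{N+1})$, i.e.\ tends to $0$ as $\la$ leaves every finite subset of $\gt_{N+1}$.

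Using the expression \eqref{eq2.0}, this function equals $\di_N(\nu)/\di_{N+1}(\la)$ when $\nu\prec\la$ and vanishes otherwise. The numerator is a constant, so the task reduces to showing that $\di_{N+1}(\la)\to\infty$ along any sequence $\la\to\infty$ in $\gt_{N+1}$ satisfying $\nu\prec\la$.

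The key geometric observation is that the interlacing
$$
\la_1\ge\nu_1\ge\la_2\ge\nu_2\ge\dots\ge\nu_N\ge\la_{N+1}
$$
pins the "interior" coordinates $\la_2,\dots,\la_N$ to the bounded range $[\nu_N,\nu_1]$; only $\la_1$ and $\la_{N+1}$ are free to run off. Since $\gt_{N+1}$ is discrete and its elements are weakly decreasing integer tuples, any sequence escaping all finite subsets of $\gt_{N+1}$ must satisfy $\la_1\to+\infty$ or $\la_{N+1}\to-\infty$. In either case at least one of the factors $(\la_1-\la_2+1)$ or $(\la_N-\la_{N+1}+1)$ in the explicit product
$$
\di_{N+1}(\la)=\frac{1}{\prod_{i=1}^{N}i!}\prod_{1\le i<j\le N+1}(\la_i-i-\la_j+j)
$$
grows without bound, while every other factor with $i<j$ remains at least $1$. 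Hence $\di_{N+1}(\la)\to\infty$ and the claim follows.

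There is no essential obstacle. The only point that needs care is correctly translating "$\la\to\infty$ in $\gt_{N+1}$" into the statement that one of the two extreme coordinates $\la_1$ or $\la_{N+1}$ is unbounded under the constraint $\nu\prec\la$; once this is in hand, the dimension formula does all the work.
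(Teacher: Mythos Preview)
Your proof is correct and follows essentially the same route as the paper: reduce to delta functions by the contraction property and closedness of $C_0$, then use the explicit formula for $\Lambda^{N+1}_N(\la,\nu)$ together with the observation that interlacing confines $\la_2,\dots,\la_N$ so that $\la\to\infty$ forces $\la_1\to+\infty$ or $\la_{N+1}\to-\infty$, making the denominator blow up. The only cosmetic difference is that the paper writes out the Vandermonde ratio directly rather than going through $\di_N(\nu)/\di_{N+1}(\la)$.
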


\begin{proof} As the norm of the linear operator in question is equal to 1 and
$C_0(\,\cdot\,)$ is a closed subspace of $\mathcal B(\,\cdot\,)$, it suffices
to check that the images of all delta-functions on $\gt_N$ are in
$C_0(\gt_{N+1})$.

For a $\nu\in\gt_N$, let $\delta_\nu$ be the delta-function on $\gt_N$
concentrated at $\nu$. Then for $\la\in\gt_{N+1}$
$$
(\Lambda^{N+1}_N \delta_\nu)(\la)=\begin{cases} N!\cdot\dfrac{\prod_{1\le
i<j\le N}(\nu_i-i-\nu_j+j)}{\prod_{1\le i<j\le
N+1}(\la_i-i-\la_j+j)}\,,&\text{if  } \nu\prec\la,\\
0,&\text{otherwise}.
\end{cases}
$$
If we assume that $(\Lambda^{N+1}_N \delta_\nu)(\la)\ne 0$ then $\la\to\infty$
is equivalent to either $\la_1\to+\infty$, or $\la_{N+1}\to -\infty$, or both;
all other coordinates must remain bounded because of the interlacing condition
$\nu\prec\la$. But then it is immediate that at least one of the factors in the
denominator in $(\Lambda^{N+1}_N \delta_\nu)(\la)$ tends to infinity. Thus, for
any fixed $\nu\in\gt_N$, $(\Lambda^{N+1}_N \delta_\nu)(\la)\to 0$ as
$\la\to\infty$ as needed.
\end{proof}

\begin{proposition}\label{2.3}
For any $N\ge 1$, the linear operator $\mathcal B(\gt_N)\to \mathcal B(\Omega)$
induced by the Markov kernel $\Lambda^{\infty}_N$ maps $C_0(\gt_N)$ to
$C_0(\Omega)$.
\end{proposition}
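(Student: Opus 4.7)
The plan is to follow the pattern of Proposition \ref{2.2}. Since $\gt_N$ is countable discrete, $C_0(\gt_N)$ is the sup-norm closure of finite linear combinations of delta functions $\delta_\la$, so by the contractivity of the Markov kernel it suffices to show that $\omega\mapsto(\La^\infty_N\delta_\la)(\omega)=\di_N\la\cdot\det[\varphi_{\la_i-i+j}(\omega)]_{i,j=1}^N$ lies in $C_0(\Omega)$ for each fixed $\la\in\gt_N$. Continuity in $\omega$ of each $\varphi_m$ (and hence of the determinant) was already invoked in the proof of Theorem \ref{2.1}, via the Fourier integral $\varphi_m(\omega)=\tfrac{1}{2\pi}\int_0^{2\pi}e^{-im\theta}\Phi_\omega(e^{i\theta})\,d\theta$ and dominated convergence with dominant $|\Phi_\omega(e^{i\theta})|\le 1$.

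Next I would characterize vanishing at infinity. The inequalities $\alpha_i^\pm,\beta_i^\pm\le\delta^\pm$ (which follow from the decreasing ordering together with $\sum_i(\alpha_i^\pm+\beta_i^\pm)\le\delta^\pm$) show that every compact subset of $\Omega$ is contained in some set $\{\delta^+\le C,\;\delta^-\le C\}$, while conversely such sets are compact by Tychonoff. Hence $\omega\to\infty$ in $\Omega$ is equivalent to $\delta^++\delta^-\to\infty$, and since the determinant is a polynomial in the finitely many values $\varphi_{\la_i-i+j}(\omega)$ the task reduces to the pointwise claim: for each fixed $n\in\Z$, $\varphi_n(\omega)\to 0$ as $\delta^++\delta^-\to\infty$.

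To prove the pointwise claim, I would factor $\Phi_\omega=\Phi_\omega^+\cdot\Phi_\omega^-$ according to whether a factor involves $u-1$ or $u^{-1}-1$. Then $\{\varphi_n\}$ is the law of $X^+-X^-$ for independent $\Z_{\ge 0}$-valued random variables $X^\pm$ with $\Ex X^\pm=\delta^\pm$ and variances $V^\pm=\gamma^\pm+\sum_i\alpha_i^\pm(1+\alpha_i^\pm)+\sum_i\beta_i^\pm(1-\beta_i^\pm)$. Passing to a subsequence assume $\delta^+\to\infty$. If $\delta^-$ stays bounded, then for any $T>0$
\[
\varphi_n\;\le\;\Prob(X^+\le n+T)+\Prob(X^->T)\;\le\;\Prob(X^+\le n+T)+\delta^-/T,
\]
and the first summand tends to $0$ by elementary direct bounds applied to whichever of $\gamma^+$, $\sum_i\alpha_i^+$, $\sum_i\beta_i^+$ tends to infinity (Poisson tail estimate, Chebyshev on the sum of geometrics or Bernoullis whose variance is controlled by a multiple of the mean, or a direct tail bound for a single large geometric). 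Letting $T\to\infty$ slowly then gives $\varphi_n\to 0$.

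The main obstacle is the case where $\delta^+$ and $\delta^-$ both tend to infinity, since then the mean $\delta^+-\delta^-$ may stay bounded. Here I would use the Cauchy--Schwarz bound $\sup_n\varphi_n\le\|\varphi^+\|_2\,\|\varphi^-\|_2$ coming from $\varphi_n=\sum_m\varphi_m^+\varphi_{m-n}^-$, and argue that at least one of $V^\pm$ must diverge. Indeed, if both stayed bounded then $\gamma^\pm$ and $\sum_i\alpha_i^\pm$ would be bounded, so $\sum_i\beta_i^\pm\to\infty$ on both signs; combined with bounded $\sum_i\beta_i^\pm(1-\beta_i^\pm)$ and the monotonicity $\beta_i^\pm\in[0,1]$ decreasing, this would force $\beta_1^+\to 1$ and $\beta_1^-\to 1$, contradicting $\beta_1^++\beta_1^-\le 1$. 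With (say) $V^+\to\infty$, Parseval $\|\varphi^+\|_2^2=\tfrac{1}{2\pi}\int_0^{2\pi}|\Phi_\omega^+(e^{i\theta})|^2\,d\theta$ combined with the explicit factorization of $|\Phi_\omega^+|^2$ into nonnegative factors $\le 1$ (with local-CLT-type Gaussian decay in $\theta$ of strength $V^+$ near $\theta=0$) yields an $O(1/\sqrt{V^+})$ bound, so $\|\varphi^+\|_2\to 0$ and the proof concludes.
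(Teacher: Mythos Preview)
Your proof is correct and takes a genuinely different route from the paper's. The paper also reduces to showing $\varphi_n(\omega)\to 0$ as $\delta^++\delta^-\to\infty$, but then argues purely analytically: it first forces $\alpha_1^\pm$ to be bounded (else a single factor of $\Phi_\omega$ kills the integral), then gives pointwise estimates on $|\Phi_\omega(u)|$ for $|u|=1$ that work once all $\beta_i^\pm\le\tfrac12$, and finally removes that restriction by the \emph{shift trick} of Remark~\ref{2.0} (converting each large $\beta^+_i$ into a small $\beta^-$ at the cost of shifting the Fourier index), followed by a contour deformation to $|u|=R<1$. Your approach instead interprets $\{\varphi_n\}$ as the law of $X^+-X^-$ and splits on whether one or both of $\delta^\pm$ diverge; in the two-sided case your variance dichotomy (bounded $V^\pm$ forces $\beta_1^+\to1$ and $\beta_1^-\to1$, contradicting $\beta_1^++\beta_1^-\le1$) plays exactly the role that the shift trick plays for the paper. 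Both arguments ultimately pivot on the constraint $\beta_1^++\beta_1^-\le1$, but yours gets there probabilistically while the paper uses the graph automorphism. One remark: your final step, the $O(1/\sqrt{V^+})$ bound on $\|\varphi^+\|_2^2$, is stated heuristically via ``local-CLT-type Gaussian decay''; this is fine, but the cleanest way to make it rigorous is to note that by independence $\|\varphi^+\|_2^2=P(X^+=\tilde X^+)$ factors as $e^{-2\gamma^+}I_0(2\gamma^+)\prod_i(1-2\beta_i^+(1-\beta_i^+))\prod_i(1+2\alpha_i^+)^{-1}$, from which $V^+\to\infty$ visibly forces one of the three factors to vanish. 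The paper's approach has the advantage of staying close to the integral representation and highlighting the structural automorphism of the Gelfand--Tsetlin graph; yours is arguably more conceptual and avoids the contour manipulation.
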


\begin{proof} As in the proof of Proposition \ref{2.2}, it suffices to prove that for
any $\nu\in\gt_N$,
$(\Lambda^\infty_N\delta_\nu)(\omega)=\Lambda^\infty_N(\omega,\lambda)$ belongs
to $C_0(\Omega)$ as a function in $\omega\in\Omega$. The proof of continuity of
$\Lambda^\infty_N(\omega,\lambda)$ in $\omega$ is contained in the proof of
Theorem 8.1 of \cite{Ols03}. It remains to show that
$\Lambda^\infty_N(\omega,\lambda)\to 0$ as $\omega\to\infty$. Note that
$\omega\to\infty$ is equivalent to $\delta^++\delta^-\to+\infty$.

Observe that the coefficients $\varphi_n=\varphi_n(\omega)$ from \eqref{eq2.2}
can be written as
\begin{equation}\label{eq2.3}
\varphi_n(\omega)=\frac 1{2\pi i}\oint_{|u|=1} \Phi_\omega(u)\frac
{du}{u^{n+1}},\qquad n\in\Z.
\end{equation}
Note that $|\Phi_\omega(u)|\le 1$ on the unit circle $|u|=1$, because the
modulus of each of the factors $(1-\alpha_i^\pm (u^{\pm 1}-1))^{-1}$,
$(1+\beta_i^\pm (u^{\pm 1}-1))$, and $e^{\gamma^\pm(u^{\pm 1}-1)}$ is $\le1$.

We are going to prove that for any fixed $n\in\Z$, $\varphi_n\to 0$ as
$\omega\to\infty$; by \eqref{eq2.1} this would imply the needed claim.

Let us assume the converse, i.e. assume that there exist $c>0$, $n_0\in\Z$, and
a sequence $\{\omega(k)\}_{k\ge 1}\subset\Omega$ with
$\lim_{k\to\infty}\omega(k)=\infty$, such that $\varphi_{n_0}(\omega(k))>c$.
Let us denote by $(\alpha^\pm(k),\beta^\pm(k),\delta^\pm(k))$ the coordinates
of $\omega(k)$. We will now collect information about $\{\omega(k)\}$ that will
eventually lead to a contradiction.

\medskip

\noindent{\it Step 1.\/} We must have $\sup_{k\ge 1} \alpha^\pm_1(k)<\infty$.
Indeed, if there is a subsequence $\{k_m\}_{m\ge 1}$ such that
$\alpha_1^\pm(k_m)\to\infty$, then along this subsequence $(1-\alpha_1^\pm
(u^{\pm 1}-1))^{-1}$ tends to zero uniformly on any compact subset of
$\{u:|u|=1\}\setminus\{u=1\}$, which implies that the right-hand side of
\eqref{eq2.3} tends to zero.

Let us fix $A>0$ such that $\sup_{k}\alpha_1^{\pm}(k)\le A$.

\medskip

\noindent{\it Step 2.\/} Assume $\omega$ ranges over the subset of elements of
$\Omega$ with $\alpha_1^\pm\le A$ and $\beta_1^\pm\le \frac 12$. Then for any
$\epsilon>0$,
$$
\lim_{\delta^++\delta^-\to\infty}\Phi_\omega(u)=0\ \text{ uniformly on }\
\{u\in\C: |u|=1,\, \Re u\le 1-\epsilon\}.
$$

Indeed, for $u$ on the unit circle with $\Re u\le 1-\epsilon$ we have
elementary estimates
\begin{multline}
|1+\beta(u-1)|^2=
(1-\beta)^2+\beta^2+2\beta(1-\beta)\Re u\\=1-2\beta(1-\beta)(1-\Re u)\le
1-2\beta(1-\beta)\epsilon\le 1-\beta\epsilon\le e^{-\beta\epsilon},
\end{multline}

\begin{multline}
|1-\alpha(u-1)|^{-2}=(1+2\alpha(1+\alpha)(1-\Re u))^{-1}\\\le
(1+2\alpha(1+\alpha)\epsilon)^{-1}\le (1+2\alpha\epsilon)^{-1}\le
e^{-\const\alpha\epsilon},
\end{multline}

$$
|e^{\gamma^+(u-1)+\gamma^-(u^{-1}-1)}|^2=e^{-2(\gamma^++\gamma^-)(1-\Re u)}\le
e^{-2(\gamma^++\gamma^-)\epsilon},
$$
with a suitable constant $\const>0$ (that depends on $A$). Thus, if
$$
\delta^++\delta^-=\gamma^++\gamma^-+
\sum_{i=1}^\infty(\alpha^+_i+\beta^+_i+\alpha^-_i+\beta^-_i)\to\infty
$$
then at least one of the right-hand sides in these estimates yields an
infinitesimally small contribution, and $\Phi_\omega(u)$ must be small. Thus,
under the above assumptions on $\omega$, we see that $\omega\to\infty$ implies
$\varphi_n(\omega)\to0$ uniformly on $n\in\Z$.

\medskip

\noindent {\it Step 3.\/} Now we get rid of the restriction
$\beta_1^{\pm}\le\frac12$. Set
$$
B^\pm(k)=\#\{i\ge 1\mid \beta_i^\pm(k)>\tfrac 12\}.
$$
Since for any $k\ge 1$ we have $\beta_1^+(k)+\beta_1^-(k)\le 1$, at least one
of the numbers $B^\pm(k)$ is equal to 0. The statement of Step 2 shows that for
any subsequence $\{\omega_{k_m}\}$ of our sequence $\{\omega(k)\}$, we must
have $B^+(k_m)+B^-(k_m)\to\infty$. Hence, possibly passing to a subsequence and
switching $+$ and $-$, we may assume that $B^+(k)\to\infty$ as $k\to\infty$.

Using the identity (cf. Remark \ref{2.0})
$$
1+\beta(u-1)=u(1+(1-\beta)(u^{-1}-1))
$$
$B^+(k)$ times on $\Phi_{\omega(k)}(u)$, we see that
$\varphi_{n_0}(\omega(k))=\varphi_{n_0-B^+(k)}(\tilde\omega(k))$, where
$\tilde\omega(k)$ is obtained from $\omega(k)$ as follows: Each
$\beta^+$-coordinate of $\omega(k)$ that is $>1/2$ is transformed into a
$\beta^-$ coordinate of $\tilde \omega(k)$ equal to $1$ minus the original
$\beta^+$-coordinate; all other coordinates are the same (equivalently, the
function $\Phi_{\omega(k)}(u)$ is multiplied by $u^{-B(k)}$). Let
$(\tilde\alpha^\pm(k),\tilde\beta^\pm(k),\tilde\gamma^\pm(k),\tilde\delta^\pm(k))$
be the coordinates of $\tilde \omega(k)$.

\medskip

\noindent {\it Step 4.\/} Since no $\beta$-coordinates of $\tilde\omega(k)$ are
greater than $1/2$, the argument of Step 2 implies that if
$\tilde\delta^+(k)+\tilde\delta^-(k)\to\infty$ then
$\varphi_{n_0}(\omega(k))=\varphi_{n_0-B^+(k)}(\tilde\omega(k))\to 0$ as
$k\to\infty$, which contradicts our assumption. Hence,
$\tilde\delta^+(k)+\tilde\delta^-(k)$ is bounded.

Let us deform the integration contour in \eqref{eq2.3} to $|u|=R$ with
$A/(1+A)<R<1$. Using the estimates (for $|u|=R$, $0\le \alpha\le A$, $0\le
\beta\le 1/2$)
$$
\gathered
|1+\beta(u^{\pm 1}-1)|\le 1+\beta|u^{\pm 1}-1|\le e^{\const_1\beta},\\
|1-\alpha(u^{\pm 1}-1)|^{-1}\le |1-\alpha(R^{\pm 1}-1)|^{-1}\le e^{\const_2\alpha},\\
|e^{\gamma(u^{\pm 1}-1)}|\le e^{\const_3\gamma}
\endgathered
$$
with suitable $\const_j>0$, $j=1,2,3$, we see that
$|\Phi_{\tilde\omega(k)}(u)|\le
e^{\const_4({\tilde\delta^+(k)+\tilde\delta^-(k)})}$, for a $\const_4>0$, which
remains bounded. On the other hand, the factor $u^{-n_0-1+B^+(k)}$ in the
integral representation \eqref{eq2.3} for
$\varphi_{n_0-B^+(k)}(\tilde\omega(k))$ tends to 0 uniformly in $u$, $|u|=R<1$.
Hence, $\varphi_{n_0}(\omega(k))=\varphi_{n_0-B^+(k)}(\tilde\omega(k))\to 0$ as
$k\to\infty$, and the proof of Proposition \ref{2.3} is complete.
\end{proof}

\section{Generalities on Markov chains on countable spaces}\label{Generalities
on Markov}

\subsection{Regularity}

Let $\A$ be a countable set, and let $(P(t))_{t\ge 0}$ be a Markov semigroup on
$\A$. Each $P(t)$ may be viewed as a matrix with rows and columns marked by
elements of $\A$; its entries will be denoted by $P(t;a,b)$, $a,b\in\A$. By
definition, $P(t;a,b)$ is the probability that the process will be in the state
$b$ at the time moment $t$ conditioned that it is in the state $a$ at time $0$.
Thus, all matrix elements of $P(t)$ are nonnegative, and their sum is equal to
1 along any row - the matrix $P(t)$ is stochastic. The transition matrices
$P(t)$ also satisfy the Chapman-Kolmogorov equation $P(s)P(t)=P(s+t)$.

Assume that there exists an $\A\times \A$ matrix $Q$ such that
\begin{equation}\label{eq3.0}
P(t;a,b)=\text{\bf 1}_{a=b}+Q(a,b)t+o(t), \qquad t\downarrow0.
\end{equation}
This relation implies that $Q(a,b)\ge 0$ for $a\ne b$ and $Q(a,a)\le 0$.
Further, we will always assume that
$$
\sum_{b\ne a}Q(a,b)=-Q(a,a)\quad\text{for any}\quad a\in\A.
$$
This is the infinitesimal analog of the condition $\sum_{b\in\A}P(t;a,b)=1$.

It is well known that the Chapman-Kolmogorov equation implies that $P(t)$
satisfies {\it Kolmogorov's backward equation\/}
\begin{equation}\label{eq3.1}
\frac{d}{dt}\, P(t)=Q P(t),\qquad t>0,
\end{equation}
with the initial condition
\begin{equation}\label{eq3.2}
P(0)\equiv \operatorname{Id}.
\end{equation}
Under certain additional conditions, $P(t)$ will also satisfy {\it Kolmogorov's
forward equation\/}
\begin{equation}\label{eq3.3}
\frac{d}{dt}\, P(t)= P(t) Q,\qquad t>0.
\end{equation}

One says that $Q$ is the {\it matrix of transition rates\/} for $(P(t))_{t\ge
0}$.

One often wants to define a Markov semigroup by giving the transition rates.
However, it may happen that this does not specify the semigroup uniquely (then
the backward equation has many solutions). Uniqueness always holds if  $\A$ is
finite or, more generally, if $\A$ is infinite but the diagonal entries
$Q(a,a)$ are bounded. However, these simple conditions do not suit our
purposes, and we need to go a little deeper into the general theory.

Let us write $Q$ in the form $Q=-q+\wt Q$, where $-q$ is the diagonal part of
$Q$ and $\wt Q$ is the off-diagonal part of $Q$. In other words,
$$
q(a,b)=-Q(a,a)\text{\bf 1}_{ab},\qquad \wt Q(a,b)=\begin{cases} Q(a,b),&a\ne
b,\\0,&a=b.\end{cases}
$$

Define $P^{[n]}(t)$ recursively by
$$
P^{[0]}(t)=e^{-t q},\qquad P^{[n]}(t)=\int_0^t e^{-\tau q}\wt Q
P^{[n-1]}(t-\tau)\,d\tau,\quad n\ge 1,
$$
and set
$$
\overline{P}(t)=\sum_{n=0}^\infty P^{[n]}(t),\qquad t\ge 0.
$$

\begin{theorem}[\cite{Fel40}]\label{3.1}

(i) The matrix $\overline{P}(t)$ is substochastic {\rm (}i.e., its elements are
nonnegative and $\sum_b P(t;a,b)\le 1${\rm).} Its elements are continuous in
$t\in[0,+\infty)$ and differentiable in $t\in(0,+\infty)$, and it provides a
solution of Kolmogorov's backward and forward equations \eqref{eq3.1},
\eqref{eq3.3} with the initial condition \eqref{eq3.2}.

(ii) $\overline{P}(t)$ also satisfies the Chapman-Kolmogorov equation.

(iii) $\overline{P}(t)$ is the minimal solution of \eqref{eq3.1} {\rm (}or
\eqref{eq3.3}{\rm )} in the sense that for any other solution $P(t)$ of
\eqref{eq3.1} {\rm (}or \eqref{eq3.3}{\rm )} with the initial condition
\eqref{eq3.2} in the class of substochastic matrices, one has $P(t;a,b)\ge
\overline{P}(t;a,b)$ for any $a,b\in\A$.
\end{theorem}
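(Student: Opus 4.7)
The strategy is to view the series as an analytic encoding of the minimal pure-jump Markov chain: $P^{[n]}(t;a,b)$ represents the probability that a chain which waits in state $a$ for an exponential time of rate $q(a,a)$ and then jumps to $b\ne a$ with probability $\wt Q(a,b)/q(a,a)$ starts at $a$, is at $b$ at time $t$, and has made exactly $n$ jumps along the way. Summation over $n$ then yields the sub-stochastic transition function of the chain up to its (possibly finite) explosion time. This picture motivates every step below, but the proof proceeds analytically.

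For part (i), nonnegativity of $P^{[n]}(t;a,b)$ follows by induction since $e^{-\tau q}$ is diagonal with nonnegative entries and $\wt Q$ has nonnegative entries. Sub-stochasticity is a second induction on the partial sum $\sum_{n=0}^N$, using $\sum_b e^{-\tau q}(a,b)=e^{-\tau q(a,a)}\le 1$ and $\sum_{b\ne a}\wt Q(a,b)=q(a,a)$; probabilistically the partial sum is the probability of at most $N$ jumps by time $t$. Setting $s=t-\tau$ in the convolution and summing over $n$ produces the integral equation
\begin{equation*}
\overline P(t)=e^{-tq}+\int_0^t e^{-(t-s)q}\,\wt Q\,\overline P(s)\,ds,
\end{equation*}
and term-by-term differentiation (justified by locally uniform convergence on $[0,T]$) yields the backward equation $\overline P'(t)=-q\,\overline P(t)+\wt Q\,\overline P(t)=Q\,\overline P(t)$. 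For the forward equation, one shows inductively, via associativity of convolution and commutativity of diagonal factors, that $P^{[n]}(t)=\int_0^t P^{[n-1]}(t-\tau)\,\wt Q\,e^{-\tau q}\,d\tau$ as well; summing, and using that $q$ commutes with $e^{-sq}$, gives $\overline P'(t)=\overline P(t)Q$.

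For part (ii), Chapman--Kolmogorov is cleanest probabilistically: conditioning on the state of the minimal chain at the deterministic time $s$ gives $\overline P(s+t;a,c)=\sum_b\overline P(s;a,b)\overline P(t;b,c)$. Alternatively, one computes $\overline P(s)\overline P(t)$ from the series representations and regroups the iterated convolutions by total number of jumps to recover $\overline P(s+t)$. For part (iii), given any sub-stochastic solution $P(t)$ of \eqref{eq3.1}--\eqref{eq3.2}, variation of constants rewrites the ODE as
\begin{equation*}
P(t)=e^{-tq}+\int_0^t e^{-(t-s)q}\,\wt Q\,P(s)\,ds;
\end{equation*}
non-negativity forces $P(t)\ge e^{-tq}=P^{[0]}(t)$, and iterating this lower bound inside the integral gives $P(t)\ge\sum_{n=0}^N P^{[n]}(t)$ for every $N$, hence $P(t)\ge\overline P(t)$.

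The principal obstacle is making variation of constants and term-by-term differentiation rigorous when the diagonal entries $q(a,a)$ are unbounded, as then $e^{-sq}$, $\wt Q$, and $\overline P(s)$ are infinite matrices whose products require careful interpretation. The standard remedy is to truncate $\A$ to an exhausting sequence of finite subsets $\A_1\subset\A_2\subset\cdots$, where all the operators are finite matrices and matrix calculus applies without scruple; the identities above are established on each $\A_k$ and passed to the limit using monotone convergence of the nonnegative terms in the series.
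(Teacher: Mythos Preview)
The paper does not prove Theorem~\ref{3.1}: it is stated with attribution to Feller~\cite{Fel40} and no argument is given (the paper immediately moves on to Corollary~\ref{3.2}). So there is no ``paper's own proof'' to compare against; your sketch is essentially the classical argument from Feller's original paper, also found in textbook form in Anderson~\cite{And91}.

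Your outline is correct in substance. A couple of remarks on the points you flag as delicate. First, the truncation to finite $\A_k$ is not really needed: for a fixed row index $a$ all the relevant sums are dominated by $q(a,a)<\infty$ (since $\sum_{c\ne a}\wt Q(a,c)=q(a,a)$ and each $\overline P(s;c,b)\le 1$), so one can differentiate the integral equation $\overline P(t;a,b)=e^{-tq(a,a)}\mathbf 1_{a=b}+\int_0^t e^{-(t-s)q(a,a)}\sum_{c\ne a}\wt Q(a,c)\overline P(s;c,b)\,ds$ directly by Leibniz's rule, obtaining the backward equation without appeal to term-by-term differentiation of the series. Second, for minimality among solutions of the \emph{forward} equation you need the companion integral equation $P(t)=e^{-tq}+\int_0^t P(s)\,\wt Q\,e^{-(t-s)q}\,ds$, derived from $P'=PQ$ by the same variation-of-constants trick applied on the right; the iteration then proceeds exactly as you describe using the alternative representation $P^{[n]}(t)=\int_0^t P^{[n-1]}(t-\tau)\,\wt Q\,e^{-\tau q}\,d\tau$ that you already noted.
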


\begin{corollary}\label{3.2}
If the minimal solution $\overline{P}(t)$ is stochastic {\rm (}the sums of
matrix elements along the rows are all equal to 1\,{\rm )} then it is the
unique solution of \eqref{eq3.1} {\rm (}or \eqref{eq3.3}{\rm )} with the
initial condition \eqref{eq3.2} in the class of substochastic matrices.
\end{corollary}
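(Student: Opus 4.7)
The plan is to deduce uniqueness directly from the minimality property stated in Theorem 3.1(iii) together with a conservation-of-mass argument. Suppose $P(t)$ is any substochastic solution of \eqref{eq3.1} (or \eqref{eq3.3}) with the initial condition \eqref{eq3.2}. By part (iii) of Theorem \ref{3.1}, we have the pointwise comparison
$$
P(t;a,b)\ge \overline{P}(t;a,b),\qquad a,b\in\A,\ t\ge 0.
$$

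Next I would sum this inequality over $b\in\A$ for fixed $a\in\A$. On one hand, since $\overline{P}(t)$ is assumed to be stochastic, $\sum_{b\in\A}\overline{P}(t;a,b)=1$. On the other hand, since $P(t)$ is substochastic, $\sum_{b\in\A}P(t;a,b)\le 1$. Hence both row sums equal $1$, and so the nonnegative quantities $P(t;a,b)-\overline{P}(t;a,b)$ sum to zero over $b$. Each term being nonnegative, we conclude $P(t;a,b)=\overline{P}(t;a,b)$ for all $a,b\in\A$ and all $t\ge 0$.

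There is essentially no obstacle here once Theorem \ref{3.1}(iii) is in hand; the argument is a soft consequence of minimality and the fact that a stochastic matrix cannot be dominated entrywise by a substochastic one without coinciding with it. The only mild point to keep track of is the direction of the comparison (the \emph{minimal} solution sits below every competitor), which is exactly what is needed so that the row-sum pinch forces equality.
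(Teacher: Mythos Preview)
Your argument is correct and is exactly the intended one: the paper states Corollary~\ref{3.2} without proof because it follows immediately from Theorem~\ref{3.1}(iii) by the row-sum pinch you describe. There is nothing to add.
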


If the minimal solution $\overline{P}(t)$ is stochastic one says that the matrix
of transition rates $Q$ is {\it regular\/}, cf. Proposition \ref{3.3}.

Observe that the construction of $\overline{P}(t)$ is very natural: the
summands $P^{[n]}(t;a,b)$ are the probabilities to go from $a$ to $b$ in $n$
jumps. The condition of $\overline{P}(t)$ being stochastic exactly means that
we cannot make infinitely many jumps in a finite amount of time.

A much more detailed account of Markov chains on countable sets can be found
e.g. in \cite{And91}.

Later on we will need the following sufficient condition for $\overline{P}(t)$
to be stochastic.

For any finite $X$, $X\subset \A$, $a\in X$, denote by $T_{a,X}$ the time of
the first exit from $X$ under the condition that the process is in $a$ at time
$0$. Formally, we can modify $\A$ and $Q$ by contracting all the states
$b\in\A\setminus X$ into one absorbing state $\wt{b}$ with $
Q_{\wt{b},c}\equiv0$ for any $ c\in X\cup \{\wt{b}\}$. We obtain a process with
a finite number of states for which the solution $\wt P(t)$ of the backward
equation is unique. Then $T_{a,X}$ is a random variable with values in
$(0,+\infty]$ defined by
$$
\Prob \{T_{a,X}\le t\}= \wt P(t;a,\wt b).
$$

\begin{proposition}\label{3.3}
Assume that for any $a\in\A$ and any $t>0$, $\varepsilon>0$, there exists a
finite set $X(\varepsilon)\subset \A$ such that\/ $\Prob
\{T_{a,X(\varepsilon)}\le t\}\le \varepsilon.$ Then the minimal solution
$\overline{P}(t)$ provided by Theorem \ref{3.1} is stochastic.
\end{proposition}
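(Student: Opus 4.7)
The plan is to show that the deficit $1-\sum_{b\in\A}\overline{P}(t;a,b)$ of the (always substochastic) minimal solution can be made arbitrarily small, by comparing $\overline{P}$ with the finite-state transition kernel $\wt P$ on $X(\varepsilon)\cup\{\wt b\}$ that appears in the very definition of $T_{a,X}$. Fix $a\in\A$, $t>0$, $\varepsilon>0$, and let $X=X(\varepsilon)$ be the finite set from the hypothesis (implicitly containing $a$, since otherwise $T_{a,X}$ is not defined). Let $\wt P(t)$ denote the unique stochastic transition semigroup of the finite-state chain on $X\cup\{\wt b\}$ whose rates come from $Q$ by collapsing $\A\setminus X$ into the absorbing state $\wt b$. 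By hypothesis and the definition of $T_{a,X}$,
$$
\sum_{b\in X}\wt P(t;a,b)=1-\wt P(t;a,\wt b)=1-\Prob\{T_{a,X}\le t\}\ge 1-\varepsilon.
$$

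The key step is then the pointwise comparison $\overline{P}(t;a,b)\ge \wt P(t;a,b)$ for all $a,b\in X$. Granted this,
$$
\sum_{b\in \A}\overline{P}(t;a,b)\ge \sum_{b\in X}\overline{P}(t;a,b)\ge \sum_{b\in X}\wt P(t;a,b)\ge 1-\varepsilon,
$$
so sending $\varepsilon\to0$ (with $a,t$ fixed) gives stochasticity of $\overline{P}$, i.e., regularity of $Q$.

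To prove the comparison, my approach would be to exploit the series representations from Theorem \ref{3.1}: both $\overline{P}(t)=\sum_{n\ge 0} P^{[n]}(t)$ and $\wt P(t)=\sum_{n\ge 0}\wt P^{[n]}(t)$ are built by the same integral recursion from their respective decompositions of the rate matrix into diagonal and off-diagonal parts. On $X$ the holding rates $q(\cdot)$ coincide for the two chains, and the off-diagonal entries agree on $X\times X$; the only difference is that mass escaping $X$ from some $a\in X$ in the original chain is distributed over $\A\setminus X$, while in the absorbed chain the whole escaping mass goes to $\wt b$ and is subsequently frozen there. A direct induction on $n$ then gives $\wt P^{[n]}(t;a,b)\le P^{[n]}(t;a,b)$ for all $a,b\in X$: at each recursion step the intermediate summation over $c\ne a$ in $P^{[n]}$ ranges over all of $\A\setminus\{a\}$, whereas the corresponding sum for $\wt P^{[n]}$ effectively ranges over $X\setminus\{a\}$ only (the $c=\wt b$ term vanishes since $\wt b$ is absorbing and $b\in X$), and the extra terms contribute nonnegatively. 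Summing over $n$ yields the desired pointwise comparison.

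The main obstacle is this pointwise comparison. Probabilistically it is the plausible assertion that the minimal process started at $a$, restricted to trajectories that do not leave $X$ before time $t$, is distributionally dominated by the absorbed chain on $X\cup\{\wt b\}$ killed upon entering $\wt b$; but since the paper works with the analytic series construction rather than a path-space realization, the induction has to be performed carefully, keeping track of why the terms involving the absorbing state $\wt b$ drop out at each recursive step and why the additional intermediate states $c\in\A\setminus X$ can only increase the iterates.
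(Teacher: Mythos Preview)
Your proposal is correct and follows essentially the same approach as the paper: compare with the absorbed finite-state chain, use $\sum_{b\in X}\wt P(t;a,b)\ge 1-\varepsilon$, and invoke the termwise inequality $\wt P^{[n]}(t;a,b)\le P^{[n]}(t;a,b)$ coming from the series construction of the minimal solution. The paper states the comparison as an immediate consequence of the series representation, whereas you spell out the induction on $n$; your sketch of that induction (absorbing state contributes nothing to targets in $X$, extra intermediate states in $\A\setminus X$ only add nonnegative terms) is exactly the right justification.
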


\begin{proof} Consider the modified process on the finite state space
$X(\epsi)\cup\{\wt{b}\}$ described above. Since its transition matrix $\wt
P(t)$ is stochastic,
$$
\sum_{b\in X(\epsi)} \wt P(t;a,b)=1-\wt P(t;a,\wt b)\ge 1-\epsi.
$$
The construction of the minimal solution as the sum of $P^{[n]}$'s, see above,
immediately implies that $P(t;a,b)\ge \wt P(t;a,b)$. Thus, $\sum_{b}
P(t;a,b)\ge 1-\epsi$ for any $\epsi>0$.
\end{proof}

\subsection{Collapsibility}

In what follows we will also need a result on {\it collapsibility\/} or {\it
lumpability\/} of Markov chains on discrete spaces. Let us describe it.

Let $E=\bigsqcup_{i\in I} E_i$ be a partition of the countable set $E$ on
disjoint subsets. Assume we are given a matrix $Q_E$ of transition rates on $E$
and a matrix $Q_I$ of transition rates on $I$ such that
\begin{equation}\label{eq3.31}
\sum_{b\in E_j} Q_E(a,b)=Q_I(i,j) \qquad \text{for any}\quad a\in E_i,\  i,j\in
I.
\end{equation}

Denote
$$
q_E(a)=-Q_E(a,a),\qquad q_I(i)=-Q_I(i,i).
$$

For any $i\in I$, let $Q_i$ be a matrix of transition rates on $E_i$ defined by
\begin{align*}
Q_i(a,b)=Q_E(a,b)\qquad \text{if}\quad a\ne b,\ &a,b\in E_i, \\
q_i(a)=-Q_i(a,a)=\sum_{b\in E_i,b\ne a}Q_i(a,b),\qquad &a\in E_i.
\end{align*}

Observe that $q_E(a)=q_i(a)+q_I(i)$ for any $a\in E_i$.

Denote by $\overline{P}_E(t)$ and $\overline{P}_I(t)$ the minimal solutions of
Kolmogorov's equations for $Q_E$ and $Q_I$, respectively.

\begin{proposition}\label{3.31}
Assume that for any $i\in I$, $Q_i$ is regular.
Then for any $t\ge 0$
\begin{equation}\label{eq3.30}
\sum_{b\in E_j}\overline{P}_E(t;a,b)=\overline{P}_I(t;i,j)
\end{equation}
for any $i,j\in I$ and any $a\in E_i$. In particular, if $Q_I$ is regular then
so is $Q_E$ and vice versa.
\end{proposition}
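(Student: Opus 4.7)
The plan is to prove the identity $\sum_{b\in E_j}\overline{P}_E(t;a,b)=\overline{P}_I(t;i,j)$ (for $a\in E_i$) by establishing two opposite inequalities, each through a minimality argument of the type provided by Theorem \ref{3.1}(iii). Set $f_j(t):=\sum_{b\in E_j}\overline{P}_E(t;a,b)$; the family $(f_j(t))_{j\in I}$ is automatically substochastic since $\sum_j f_j(t)=\sum_{b\in E}\overline{P}_E(t;a,b)\le 1$.

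For the lower bound $f_j(t)\ge\overline{P}_I(t;i,j)$, I would sum Kolmogorov's forward equation \eqref{eq3.3} for $\overline{P}_E$ over $b\in E_j$, invoking the compatibility \eqref{eq3.31} in the form $\sum_{b\in E_j}Q_E(c,b)=Q_I(k,j)$ for $c\in E_k$. This produces
$$
\frac{d}{dt}f_j(t)=\sum_{k\in I}f_k(t)\,Q_I(k,j),\qquad f_j(0)=\delta_{i,j},
$$
so $(f_j)_{j\in I}$ is a substochastic solution of the forward Kolmogorov equation for $Q_I$ with the correct initial datum, and Theorem \ref{3.1}(iii) applied to the $Q_I$-chain yields $f_j(t)\ge\overline{P}_I(t;i,j)$.

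For the reverse inequality I would construct an explicit substochastic Markov semigroup $P_E(t)$ on $E$ with generator $Q_E$ whose block-projection coincides with $\overline{P}_I$, and then use Theorem \ref{3.1}(iii) applied to $\overline{P}_E$ to conclude $\overline{P}_E(t;a,b)\le P_E(t;a,b)$. The construction is driven by the observation that, thanks to \eqref{eq3.31}, the total rate of leaving block $E_i$ from any $a\in E_i$ equals $q_I(i)$, depending only on $i$. Concretely, sample a trajectory $(I(t))_{0\le t<\zeta}$ of the minimal $Q_I$-chain started from $i$, with jump times $0=T_0<T_1<\dots$; on each interval $[T_k,T_{k+1})$ on which $I\equiv i_k$ evolve independently inside $E_{i_k}$ according to the chain $Q_{i_k}$ starting from the current $X(T_k)$, which does not explode by the assumed regularity of $Q_{i_k}$; at each transition $T_{k+1}$ with $X(T_{k+1}^-)=c\in E_{i_k}$ and $I(T_{k+1})=i_{k+1}$, choose $X(T_{k+1})=b\in E_{i_{k+1}}$ with probability $Q_E(c,b)/Q_I(i_k,i_{k+1})$, which is a genuine probability distribution on $E_{i_{k+1}}$ by \eqref{eq3.31}; and finally set $X(t)=\partial$ for $t\ge\zeta$. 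By construction the projection of $X$ to $I$ is exactly the $Q_I$-chain, so $\sum_{b\in E_j}P_E(t;a,b)=\overline{P}_I(t;i,j)$; summing the inequality $\overline{P}_E(t;a,b)\le P_E(t;a,b)$ over $b\in E_j$ then yields $f_j(t)\le\overline{P}_I(t;i,j)$.

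The main technical obstacle will be the careful verification that this concatenation of holding-time and jump prescriptions genuinely produces a substochastic Markov semigroup on $E$ whose infinitesimal rates coincide with $Q_E$; the regularity of each $Q_i$ is used precisely at this step to rule out explosion of the within-block dynamics during any single sojourn in a block. Once both inequalities are combined, the stated identity follows, and the final assertion of the proposition is immediate: summing the identity over $j\in I$ gives $\sum_j\overline{P}_I(t;i,j)=\sum_{b\in E}\overline{P}_E(t;a,b)$ for every $a\in E_i$, so one side equals $1$ for all $t\ge 0$ exactly when the other does, which is the definition of regularity.
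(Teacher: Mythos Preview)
Your strategy differs from the paper's: they prove the identity directly by expanding both minimal solutions as Dyson series $\sum_n P^{[n]}$ and matching terms, using the regularity of each $Q_i$ to obtain the key identity \eqref{eq3.32} (the sojourn time in block $E_i$ under $\overline P_E$ is exponential with rate $q_I(i)$, independent of the starting point). Your coupling construction for the upper bound is a pathwise repackaging of exactly this fact, and once you verify that the concatenated process yields a transition function satisfying Kolmogorov's backward equation for $Q_E$ (this is what Theorem~\ref{3.1}(iii) actually requires, not merely the infinitesimal relation \eqref{eq3.0}), that half goes through.

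The lower-bound half has a genuine gap. Summing the forward equation \eqref{eq3.3} over $b\in E_j$ requires interchanging $\tfrac{d}{dt}$ with $\sum_{b\in E_j}$ and then rearranging the double sum $\sum_{b\in E_j}\sum_{c\in E}\overline P_E(t;a,c)Q_E(c,b)$. The rearrangement is the problem: for $c\in E_j$ the diagonal term $Q_E(c,c)=-q_E(c)$ is negative, so Tonelli does not apply, and Fubini is valid only if $\sum_{c\in E_j}\overline P_E(t;a,c)\,q_E(c)<\infty$. This fails in general when $q_E(\cdot)$ is unbounded on $E_j$, which is precisely the situation in the intended applications (quadratic rates on infinite fibers). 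A cleaner repair is to drop the forward-equation argument and instead argue that your constructed process $P_E$ \emph{equals} $\overline P_E$: since each $Q_i$ is regular, the minimal $Q_E$-chain can explode only through infinitely many block transitions, i.e.\ exactly when the projected $Q_I$-chain explodes, which is when your construction kills $X$. Making this precise (e.g.\ by matching the series $\sum_n P_E^{[n]}$ with the jumps of your construction) is essentially the paper's computation and renders the separate lower bound unnecessary.
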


\begin{proof} Let us use notations $q$, $\wt Q$, and $P^{[n]}$ for the diagonal
and off-diagonal parts of the matrices of transition rates, and for the $n$th
terms in the series representations of minimal solutions, respectively.

The hypothesis means that the identity
$$
\sum_{b\in E_i}\sum_{n=0}^\infty P_i^{[n]}(t;a,b)=1,\qquad a\in E_i,\quad i\in
I,
$$
holds, where
$$
P_i^{[n]}(t)=\int\limits_{0\le t_1\le\dots\le t_n\le t}e^{-t_1 q_i}\wt Q_i
e^{-(t_2-t_1)q_i}\wt Q_i \cdots e^{-(t_n-t_{n-1})q_i}\wt
Q_ie^{-(t-t_n)q_i}dt_1\cdots dt_n
$$
and $P_i^{(0)}(t)=e^{-tq_i}$. Using the fact that
$q_i(\,\cdot\,)=q_E(\,\cdot\,)-q_I(i)$ on $E_i$, rewrite this identity as

\begin{multline}\label{eq3.32}
\sum_{b\in E_i}\sum_{n=0}^\infty \,\int\limits_{0\le t_1\le\dots\le t_n\le
t}\left(e^{-t_1 q_E}\text{\bf 1}_{E_i}\,\wt Q_E \,\text{\bf
1}_{E_i}e^{-(t_2-t_1)q_E}\text{\bf 1}_{E_i} \cdots\right.\\
\left.\cdots\text{\bf 1}_{E_i}\,\wt Q_E\,\text{\bf
1}_{E_i}e^{-(t-t_n)q_E}\right)(a,b)dt_1\cdots dt_n=e^{-tq_I(i)}.
\end{multline}

The probabilistic meaning of this formula is that the time that the minimal
solution $\overline{P}_E(t)$ started at $a\in E_i$ spends in $E_i$ is
exponentially distributed with rate $q_I(i)$, independent of $a$.

The minimal solution $\overline{P}_I(t)$ has the form
\begin{multline}
\overline{P}_I(t;i,j)=\sum_{n=0}^\infty \,\int\limits_{0\le s_1\le\dots\le
s_n\le t} \sum_{k_1,\dots,k_{n-1}\in I} e^{-s_1q_I(i)}\wt Q_I(i,k_1)
e^{-(s_2-s_1)q_I(k_1)}\cdots
\\\cdots \wt Q_I(k_{n-1},j)e^{-(t-s_n)q_I(j)}ds_1\dots ds_n.
\end{multline}

By \eqref{eq3.31},
\begin{equation}\label{eq3.33}
\wt Q_I(k,l)=\sum_{d\in E_l} \wt Q_E(c,d) \quad \text{for any} \quad k,l\in
I,\; k\ne l,\; c\in E_k.
\end{equation}

Substituting the right-hand side of \eqref{eq3.33} for each $\wt
Q_I(\,\cdot\,,\,\cdot\,)$ and the left-hand side of \eqref{eq3.32} for each
$e^{-sq_I(\,\cdot\,)}$, in the $n$th term we obtain the part of the series for
$\overline{P}_E(t;a,b)$ with $a\in E_i$ that takes into account trajectories
whose projections to $I$ make exactly $n$ jumps, and in addition to that there
is a summation over $b\in E_j$. Clearly, the summation over $n$ reproduces the
complete series for $\overline{P}_E(t;a,b)$ thus proving \eqref{eq3.30}.

The equivalence of stochasticity of $\overline{P}_E(t)$ and that of
$\overline{P}_I(t)$ immediately follows from summation of \eqref{eq3.30} over
$j\in I$.
\end{proof}

\subsection{Infinitesimal generator}

The last part of the general theory that we need involves generators of Markov
semigroups.

Assume that we have a regular matrix of transition rates $Q$. Let $(P(t))_{t\ge
0}$ be the corresponding Markov semigroup and assume in addition that it is
Feller.

The {\it generator\/} $A$ of the semigroup $(P(t))_{t\ge 0}$ is a linear
operator in $C_0(E)$ defined by
\begin{equation}\label{eq3.3a}
Af=\lim_{t\to +0}\frac{P(t)f-f}{t}\,.
\end{equation}
The set of $f\in C_0(E)$ for which this limit exists (in the norm topology of
$C_0(E)$) is called the {\it domain\/} of the generator $A$ and denoted by
$D(A)$. It is well known that the operator $A$ with $D(A)$ as above is closed
and dissipative.

It turns out that the domain $D(A)$ can be characterized by an apparently
weaker condition, which is easier to verify in practice:

\begin{proposition}\label{3.3b}
If $f\in C_0(E)$ is such that the limit in the
right-hand side of \eqref{eq3.3a} exists pointwise and the limit function belongs to
$C_0(E)$, then $f\in D(A)$, so that the limit actually holds in the norm
topology.
\end{proposition}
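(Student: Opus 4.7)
The plan is to establish the Dynkin-type identity
\begin{equation}\label{eq:plan-dynkin}
P(t)f(a) - f(a) = \int_0^t P(s)g(a)\,ds, \qquad a \in E,\ t\ge 0,
\end{equation}
where the right-hand side is a Bochner integral in $C_0(E)$, well-defined because the map $s \mapsto P(s)g$ is continuous into $C_0(E)$ by the Feller property applied to $g \in C_0(E)$. Given \eqref{eq:plan-dynkin}, dividing by $t$ and using strong continuity of the semigroup at $s=0$ yields
$$
\Big\|\frac{P(t)f - f}{t} - g\Big\|_\infty \le \frac{1}{t}\int_0^t \|P(s)g - g\|_\infty\, ds \to 0 \quad \text{as } t \to 0+,
$$
so $f \in D(A)$ with $Af = g$, as required.

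To prepare for \eqref{eq:plan-dynkin}, I would first identify the pointwise limit. For any bounded $f$, the infinitesimal expansion $P(h;a,a) = 1 - q(a)h + o(h)$ combined with Scheffe's lemma applied to the substochastic measures $b \mapsto \mathbf{1}_{b\ne a}\,P(h;a,b)/h$ (whose total masses $(1 - P(h;a,a))/h$ converge to $q(a) = \sum_{b\ne a}Q(a,b)$ and whose pointwise values converge to $Q(a,b)$) shows that
$$
\lim_{h\to 0+}\frac{P(h)f(a) - f(a)}{h} = Qf(a) := -q(a)f(a) + \sum_{b\ne a}Q(a,b)f(b)
$$
exists pointwise and equals $Qf(a)$. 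Thus the hypothesis identifies $g$ with $Qf$ as functions on $E$, and the extra content of the assumption $g \in C_0(E)$ is precisely the boundedness of $Qf$ and its vanishing at infinity.

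The heart of the argument is then a Dynkin martingale formula for the c\`adl\`ag Markov process $(X_t)$ on $E$ associated with the Feller semigroup $(P(t))$. Since $Qf = g$ is bounded, the process
$$
M_t := f(X_t) - f(X_0) - \int_0^t g(X_s)\,ds
$$
is a $P_a$-martingale for every starting state $a \in E$: between successive jumps $f(X_\cdot)$ is constant while the integral advances deterministically at rate $g(X_\cdot)$; at each jump $c \to d$ occurring at instantaneous rate $q(c)$ with distribution $Q(c,\cdot)/q(c)$ on $E \setminus \{c\}$, the expected jump rate of $f(X_\cdot)$ is $q(c)\sum_{d\ne c}[f(d) - f(c)]\,Q(c,d)/q(c) = Qf(c) = g(c)$, which is exactly what the compensator subtracts. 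Taking $E_a[\,\cdot\,]$ in the identity $E_a M_t = E_a M_0 = 0$ and applying Fubini (justified by $|g|\le \|g\|_\infty$) gives \eqref{eq:plan-dynkin}.

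The main obstacle is the rigorous justification of the Dynkin martingale property for $f$ in the broad class $\{f \in C_0(E) : Qf \in C_0(E)\}$, rather than for $f$ already known to lie in $D(A)$. The standard device is a localization by the stopping times $\tau_n := $ ($n$-th jump time of $X$): on $[0,\tau_n]$ the process has only finitely many jumps, so $M_{t\wedge\tau_n}$ is a martingale by a direct pathwise computation. The passage $n \to \infty$ is then justified by the regularity of $Q$ (the non-explosion $\tau_n \to \infty$ $P_a$-a.s., which is exactly the stochasticity of the minimal semigroup guaranteed by Proposition \ref{3.3}) combined with the uniform bound $|M_{t\wedge\tau_n}| \le 2\|f\|_\infty + t\|g\|_\infty$ needed for dominated convergence in the martingale identity. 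Once the martingale property is in place, \eqref{eq:plan-dynkin} and the Feller norm continuity finish the proof.
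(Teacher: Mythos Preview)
Your proof is correct but takes a genuinely different route from the paper's.

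The paper's argument is purely functional-analytic: the set of pairs $(f,g)\in C_0(E)\times C_0(E)$ with $g$ the pointwise limit of $(P(t)f-f)/t$ is the graph of a dissipative operator $\wt A$ extending $A$ (dissipativity follows from the positive-maximum principle: if $f$ attains a positive maximum at $a$ then $P(t)f(a)\le f(a)$, so $g(a)\le0$). Since the generator of a Feller semigroup has no proper dissipative extension in $C_0(E)$, one concludes $\wt A=A$. This is short and works for any Feller semigroup on a locally compact space, but it leans on the Hille--Yosida/Lumer--Phillips machinery.

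Your approach instead exploits the explicit jump-hold structure of a countable-state chain. You identify $g=Qf$ via Scheff\'e, build the Dynkin martingale $M_t=f(X_t)-f(X_0)-\int_0^t g(X_s)\,ds$ by localizing at the successive jump times $\tau_n$ (where the computation is elementary), pass to the limit using regularity ($\tau_n\to\infty$ a.s.) and the uniform bound $|M_{t\wedge\tau_n}|\le 2\|f\|_\infty+t\|g\|_\infty$, and then read off the Bochner identity $P(t)f-f=\int_0^t P(s)g\,ds$, from which norm convergence follows by strong continuity. This is longer and specific to the discrete setting, but it is constructive, makes the role of non-explosion transparent, and avoids the abstract maximality argument. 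Both proofs are valid; yours trades generality for concreteness.
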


\begin{proof} The idea is that the set of couples of vectors $(f,g)\in
C_0(E)\times C_0(E)$, such that $g$ is the pointwise limit of the right-hand
side of \eqref{eq3.3a}, serves as the graph of a dissipative operator $\wt A$
extending $A$, whence $\wt A=A$. A detailed argument can be found in
\cite[\S4.8]{Ito06}. In fact, \cite{Ito06} considers the case of a compact
state space $E$. However, the proof goes through word-for-word; the only
property one needs is that for any $f\in C_0(E)$, $f$ attains its minimum if it
has negative values.
\end{proof}

The following statement is probably well known but we were not able to locate
it in the literature.

\begin{proposition}\label{3.3a}
Assume that for any $a\in E$ the set of $b$ such
that $Q(a,b)\ne 0$ is finite. Then
\begin{equation}\label{eq3.3b}
D(A)=\{f\in C_0(E)\mid Qf \in C_0(E)\},
\end{equation}
and for $f\in D(A)$, $Af=Qf$.
\end{proposition}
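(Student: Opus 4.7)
The plan is to derive both inclusions from the single integral identity
\begin{equation*}
(P(t)f)(a)-f(a)=\int_0^t (QP(s)f)(a)\,ds,\qquad a\in E,\ f\in C_0(E),
\end{equation*}
and then invoke Proposition \ref{3.3b} for the nontrivial direction. Here $(QP(s)f)(a)=\sum_{c}Q(a,c)(P(s)f)(c)$ is a finite sum by the hypothesis that each row of $Q$ has finitely many nonzero entries.

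First I would justify this identity. Since $Q$ is regular, Corollary \ref{3.2} identifies $P(t)$ with the minimal solution $\overline{P}(t)$ of Theorem \ref{3.1}, so Kolmogorov's backward equation holds pointwise and can be integrated to
\begin{equation*}
P(t;a,b)-\text{\bf 1}_{a=b}=\int_0^t\sum_c Q(a,c)P(s;c,b)\,ds.
\end{equation*}
Multiplying by $f(b)$ and summing in $b$, I would interchange the sum over $b$ with both the integral and the (already finite) sum over $c$, using $\sum_b P(s;c,b)|f(b)|\le\|f\|$ to apply Fubini. This yields the displayed identity.

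For the easy inclusion, if $f\in D(A)$ then the time-averaged right-hand side $\frac1t\int_0^t (QP(s)f)(a)\,ds$ must converge to $(Af)(a)$ as $t\downarrow 0$. By the Feller property the map $s\mapsto P(s)f$ is norm-continuous at $0$, hence pointwise continuous, so each $(P(s)f)(c)\to f(c)$; since the sum defining $(QP(s)f)(a)$ is finite, this gives $(QP(s)f)(a)\to(Qf)(a)$, and the time average converges to the same limit. Thus $Af=Qf$ on $E$, and in particular $Qf\in C_0(E)$.

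For the converse, assume $f\in C_0(E)$ with $Qf\in C_0(E)$. The very same computation shows that $t^{-1}((P(t)f)(a)-f(a))\to(Qf)(a)$ pointwise as $t\downarrow 0$. Since the pointwise limit $Qf$ lies in $C_0(E)$, Proposition \ref{3.3b} immediately gives $f\in D(A)$ together with $Af=Qf$. The main delicate point will be the Fubini-type interchange in the derivation of the integral identity; it rests essentially on the finite-row hypothesis on $Q$ (without which $(QP(s)f)(a)$ need not even be defined) together with substochasticity of $P(s;c,\cdot)$. Everything else is a short bookkeeping argument built on top of this one identity.
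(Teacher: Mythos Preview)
Your proof is correct. Both your argument and the paper's reduce the claim to establishing the single pointwise limit
\[
\lim_{t\downarrow 0}\,t^{-1}\bigl((P(t)f)(a)-f(a)\bigr)=(Qf)(a)\qquad\text{for all }f\in C_0(E),\ a\in E,
\]
and then invoke Proposition~\ref{3.3b}. The difference lies in how that limit is obtained. The paper argues probabilistically: setting $X=\{a\}\cup\{b:Q(a,b)>0\}$, it shows $\sum_{b\notin X}P(t;a,b)=O(t^2)$ because leaving $X$ from $a$ requires at least two jumps, and then uses the expansion \eqref{eq3.0} on the finitely many remaining terms. You instead integrate the backward equation to get $(P(t)f)(a)-f(a)=\int_0^t(QP(s)f)(a)\,ds$ and use strong continuity of the Feller semigroup to see that the integrand is continuous at $s=0$ with value $(Qf)(a)$, so the time average converges there. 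Your route is slightly more analytic and leans on the semigroup machinery (strong continuity, the integrated backward equation, Fubini), whereas the paper's is more self-contained and combinatorial; in exchange, your argument avoids the explicit $O(t^2)$ estimate and makes the role of the finite-row hypothesis particularly transparent (it is exactly what keeps the sum defining $(QP(s)f)(a)$ finite and lets you pass to the limit termwise).
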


\begin{proof} First of all, due to the assumption on the matrix $Q$, $Qf$ is well
defined for any function $f$ on $E$. We will show that for any $f\in C_0(E)$ and
$a\in E$
\begin{equation}\label{eq3.3c}
\lim_{t\to +0}t^{-1}\sum_{b\in E} (P(t;a,b)-\text{\bf 1}_{a=b})f(b)=\sum_{b\in
E}Q(a,b)f(b).
\end{equation}
Then the claim of the proposition will follow from Proposition \ref{3.3b}.

Set
$$
X=\{a\}\cup\{b'\in E\mid Q(a,b')>0\}.
$$
By our hypothesis, this set is finite. We will show that
\begin{equation}\label{eq3.3d}
\sum_{b\in E\setminus X} P(t;a,b) =O(t^2),\qquad t\to 0.
\end{equation}
This would imply that we can keep only finitely many terms in \eqref{eq3.3c},
and then \eqref{eq3.3c} would follow from \eqref{eq3.0}.

Observe that the left-hand side of \eqref{eq3.3d} is the probability of the
event that the trajectory started at $a$ is outside of $X$ after time $t$.  In
order to exit $X$ the trajectory started at $a$ needs to make at least two
jumps. Assume that the first two jumps are $a\to a'\to a''$ with $a'\in X$.
Since $X$ is finite, the rates of leaving $a'$ (equal to $-Q(a',a')$) are
bounded from above, and the probability of leaving $X$ after time $t$ can be
estimated by
$$
-Q(a,a)\max_{a'\in X} (-Q(a',a'))\cdot t^2+o(t^3)=O(t^2),\qquad t\to +0,
$$
as required.
\end{proof}

\begin{corollary}\label{3.3c}
Under the hypothesis of Proposition \ref{3.3a} assume additionally that for any
$b\in E$ the set of $a\in E$ with $Q(a,b)\ne0$ is finite. Then any finitely
supported function $f$ on $E$ belongs to $D(A)$.
\end{corollary}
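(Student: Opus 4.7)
The plan is to directly apply Proposition \ref{3.3a}, which characterizes $D(A)$ as the set of $f\in C_0(E)$ with $Qf\in C_0(E)$. So given a finitely supported $f$ on $E$, I need to verify two things: that $f\in C_0(E)$, and that $Qf\in C_0(E)$.

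The first point is immediate: since $E$ is a countable set treated as a discrete topological space, any function with finite support automatically lies in $C_0(E)$, as it vanishes outside a compact (finite) set.

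For the second point, let $S=\{b\in E\mid f(b)\ne 0\}$, which is finite by hypothesis. Then for every $a\in E$,
\[
(Qf)(a)=\sum_{b\in E}Q(a,b)f(b)=\sum_{b\in S}Q(a,b)f(b),
\]
a finite sum (so no convergence issue arises). For each $b\in S$, the additional hypothesis of the corollary tells us that the set $T_b=\{a\in E\mid Q(a,b)\ne 0\}$ is finite. Set $T=\bigcup_{b\in S}T_b$; this is a finite union of finite sets, hence finite. For any $a\notin T$ we have $Q(a,b)=0$ for every $b\in S$, and therefore $(Qf)(a)=0$. Thus $Qf$ is itself finitely supported and in particular belongs to $C_0(E)$.

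Applying Proposition \ref{3.3a} gives $f\in D(A)$ (with $Af=Qf$), which is the claim. There is really no obstacle here; the corollary is just the observation that the transpose-type finiteness assumption guarantees $Q$ preserves the space of finitely supported functions, after which Proposition \ref{3.3a} does all the work.
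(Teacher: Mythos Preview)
Your proof is correct and follows exactly the same approach as the paper: show that $Qf$ is finitely supported (hence in $C_0(E)$) using the column-finiteness hypothesis, then invoke Proposition~\ref{3.3a}. The paper states this in one sentence, while you have written out the details explicitly.
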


\begin{proof} Indeed, this follows immediately from Proposition \ref{3.3a}, since $Qf$
is finitely supported and hence belongs to $C_0(E)$.
\end{proof}

\section{Semigroups on $\gt_N$}\label{Semigroups}

The goal of this section is to define Markov semigroups $(P_N(t))_{t\ge 0}$ on
$E_N=\gt_N$ and prove that they are Feller.

\subsection{Case $N=1$. Birth and death process on $\Z$}\label{Case N=1}

Let $(u,u')$ and $(v,v')$ be two pairs of complex numbers such that
$(u+k)(u'+k)>0$ and $(v+k)(v'+k)>0$ for any $k\in\Z$. The condition on $(u,u')$
means that either $u'=\bar u\in\C\setminus\R$ or there exists $k\in\Z$ such
that $k<u,u'<k+1$; the condition on $(v,v')$ is similar. Note that $u+u'\in\R$
and $v+v'\in\R$. Assume additionally that $u+u'+v+v'>-1$.

Define a matrix of transition rates $\bigl[\D(x,y)\bigr]_{x,y\in\Z}$ with rows
and columns parameterized by elements of $E_1=\gt_1=\Z$ by
\begin{equation}\label{eq3.4}
\D(x,y)=\begin{cases} (x-u)(x-u'),
& \text{if  }y=x+1,\\
(x+v)(x+v'),&\text{if }y=x-1,\\
-(x-u)(x-u')-(x+v)(x+v'),&\text{if }y=x,\\
0,&\text{otherwise}.
\end{cases}
\end{equation}

In the corresponding Markov chain the particle would only be allowed to jump by
one unit at a time; such processes on $\Z_{\ge0}$ are usually referred to as
{\it birth and death processes\/}, while our Markov chain is an example of
so-called {\it bilateral birth and death processes\/} which were also
considered in the literature, see e.g. \cite[Section 17]{Fel57}, \cite{Pru63},
\cite{Yan90}.

Note that $\D(x,x\pm1)>0$ for all $x\in\Z$, because of the conditions imposed
on the parameters.

\begin{theorem}\label{3.4}

The matrix of transition rates $\D$ is regular. Moreover, the corresponding
Markov semigroup is Feller.
\end{theorem}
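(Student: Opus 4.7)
I would apply Proposition \ref{3.3} with the exhausting finite sets $X_N := \{x \in \Z : |x| \le N\}$ and the Lyapunov function $V(x) = x^2$ (natural because the jump rates are quadratic in $x$). A direct computation gives
\begin{equation*}
\D V(x) = \bigl(2 - 2(u+u'+v+v')\bigr) x^2 + O(|x|),
\end{equation*}
so $\D V \le CV + D$ on $\Z$ for some absolute constants $C, D > 0$. Dynkin's formula applied to the process killed at $T := T_{a, X_N}$, combined with Gronwall's inequality, yields $\Ex[V(X_{t \wedge T})] \le (a^2 + D/C) e^{Ct}$. Since $V(X_T) \ge N^2$ on $\{T < \infty\}$, Markov's inequality gives
\begin{equation*}
\Prob\{T_{a, X_N} \le t\} \le \frac{(a^2 + D/C) e^{Ct}}{N^2} \xrightarrow[N \to \infty]{} 0,
\end{equation*}
verifying the hypothesis of Proposition \ref{3.3}.

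\textbf{Feller property.} The key task is preservation of $C_0(\Z)$; strong continuity then follows from the pointwise relation $P(t; a, a) \to 1$ as $t \to 0$ (built into the construction of the minimal solution) combined with a uniform-in-$a$ tail bound supplied below. Since $P(t)$ is a contraction and finitely supported functions are dense in $C_0(\Z)$, it suffices to show $P(t; a, y_0) \to 0$ as $|a| \to \infty$ for each fixed $y_0 \in \Z$. This is where hypothesis \eqref{i.2+} enters crucially: it amounts, heuristically, to negative drift of $\log|X_t|$, which prevents the process started at very large $|a|$ from reaching any fixed bounded interval in finite time.

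To exploit this I would use a second Lyapunov function $V_\eta(x) = (1+|x|)^{-\eta}$ for small $\eta > 0$. Taylor expansion of the increments $V_\eta(x \pm 1) - V_\eta(x)$ and cancellation of the leading $(1+|x|)^{-\eta-1}$ contributions yield
\begin{equation*}
\D V_\eta(x) = \eta\bigl((u+u'+v+v'+1) + \eta\bigr) V_\eta(x) + O\bigl(V_\eta(x)/|x|\bigr).
\end{equation*}
By \eqref{i.2+}, $u+u'+v+v'+1 > 0$; hence for $\eta$ small enough and $R$ large enough, $\D V_\eta \le c V_\eta$ on $\{|x| > R\}$ for some $c > 0$. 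Setting $\tau := \inf\{t \ge 0 : |X_t| \le R\}$, the process $M_t := V_\eta(X_{t \wedge \tau}) e^{-c(t \wedge \tau)}$ is a bounded supermartingale for $|a| > R$, and optional stopping gives
\begin{equation*}
\Prob\{\tau \le t \mid X_0 = a\} \le \left(\frac{1+R}{1+|a|}\right)^\eta e^{ct} \xrightarrow[|a| \to \infty]{} 0.
\end{equation*}
For any fixed $y_0$ and $R \ge |y_0|$, $\{X_t = y_0\} \subset \{\tau \le t\}$, so $P(t; a, y_0) \to 0$. The same bound, uniform on compact time intervals, yields strong continuity of $t \mapsto P(t)f$ in $C_0(\Z)$.

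I expect the main obstacle to be identifying the Lyapunov function $V_\eta$ for the Feller step; the crucial feature is that the exponent $\eta$ must satisfy $\eta < u+u'+v+v'+1$, which is precisely what hypothesis \eqref{i.2+} makes possible. Without \eqref{i.2+}, no such $\eta$ would exist, reflecting the expected breakdown of the Feller property when the process acquires outward log-scale drift and can come in from infinity. The regularity estimate with $V(x) = x^2$ is, by comparison, a routine quadratic computation.
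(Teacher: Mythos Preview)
Your approach is correct and takes a genuinely different route from the paper. The paper reduces to a one-sided birth-death chain on $\Z_{\ge0}$ and appeals to Feller's 1959 structure theory (Theorem~\ref{3.5}, Corollary~\ref{3.6}): regularity follows from divergence of the natural scale ($x_\infty=\infty$, which is where $u+u'+v+v'>-1$ enters) together with the Laplace-transform identity \eqref{eq3.8} for first-passage times; the Feller property follows from $v(x_n)\to0$, which needs both $x_\infty=\infty$ and $\sum_n x_n\mu_n=\infty$. Your route replaces all of this by two drift estimates and standard Gronwall/optional-stopping bounds; it is more self-contained and would extend more readily beyond the nearest-neighbor setting, while the paper's method yields exact Laplace transforms of passage times and ties the result to the classical boundary classification. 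One minor correction to your closing discussion: the constraint $\eta<u+u'+v+v'+1$ is not actually needed --- your own computation gives $\D V_\eta\le cV_\eta$ on $\{|x|>R\}$ for any $c$ just above $\eta\bigl(u+u'+v+v'+1+\eta\bigr)$, and the supermartingale bound $\bigl(\tfrac{1+R}{1+|a|}\bigr)^\eta e^{ct}\to0$ then works for every $\eta>0$ regardless of the sign of that constant. In particular neither half of your argument actually uses the hypothesis $u+u'+v+v'>-1$; the paper's proof does, but only because Feller's criterion happens to require $x_\infty=\infty$.
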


In what follows we denote this semigroup by $(P_1(t))_{t\ge 0}$.

The proof of Theorem \ref{3.4} is based on certain results from \cite{Fel59};
let us recall them first.

Consider a birth and death process on $\Z_{\ge 0}$ with transition rates given
by
$$
Q(x,y)=\begin{cases} \beta_x,
& \text{if  }y=x+1,\\
\delta_x,&\text{if }y=x-1,\\
-\beta_x-\delta_x,&\text{if }y=x,\\
0, &\text{otherwise}.
\end{cases}
$$
Here $\{\beta_x\}_{x\ge 0}$, $\{\delta_x\}_{x\ge 1}$ are positive numbers, and
we also set $\delta_0=0$.

The {\it natural scale\/} of the process is given by
\begin{equation}\label{eq3.5}
x_0=0, \qquad x_k=\sum_{l=0}^{k-1}
\frac{\delta_1\cdots\delta_l}{\beta_0\dots\beta_l},\qquad k=1,2,\dots\,, \qquad
x_\infty=\lim_{k\to\infty}x_k.
\end{equation}
Note that $x_\infty$ may be infinite. Denote by $\mathcal A$ the operator on
the space of functions on $\mathbb A=\{x_0,x_1,\dots\}$ defined by
$$
(\mathcal Af)(x_i)=-(\delta_i+\beta_i)f(x_i)+\delta_i
f(x_{i-1})+\beta_if(x_{i+1}),\qquad i=0,1,\dots\,.
$$

Fix $n>0$. Let $F_i(t)$ be the probability that the process started at $i$
reaches $n$ before time $t$. Let $G_n(t)$ be the probability that the process
started at $n$ reaches 0 before time $t$ and before the process escapes to infinity.

\begin{theorem}[\cite{Fel59}]\label{3.5}

(i) For any $a>0$ there exists exactly one function $u$ on $\mathbb A$ such
that $\mathcal A u=au$, $u(x_0)=1$. The function $u$ is strictly increasing:
$u(x_0)<u(x_1)<u(x_2)<\dots$ and satisfies
\begin{equation}\label{eq3.6}
u(x_n)=1+a\sum_{k=0}^{n-1} u(x_k)(x_n-x_k)\mu_k
\end{equation}
with
\begin{equation}\label{eq3.7}
\mu_k=\frac{\beta_0\cdots\beta_{k-1}}{\delta_1\cdots\delta_k},\quad
k=1,2,\dots,\qquad\mu_0=1.
\end{equation}
Furthermore,
\begin{equation}\label{eq3.8}
\frac{u(x_i)}{u(x_n)}=\int_0^{\infty}e^{-at}dF_i(t), \qquad 0\le i<n.
\end{equation}

(ii) With $u(\,\cdot\,)$ as above, set
$$
v(x_n)=u(x_n)\sum_{j=n}^\infty \frac{x_{j+1}-x_j}{u(x_j)u(x_{j+1})}\,,\qquad
n=0,1,\dots\,.
$$
This is a strictly decreasing function, and
\begin{equation}\label{eq3.9}
\frac{v(x_n)}{v(x_0)}=\int_0^\infty e^{-at}dG_n(t),\qquad n=1,2,\dots\,.
\end{equation}
Furthermore, $\lim_{n\to\infty} v(x_n)=0$ if $x_\infty=\infty$ and $\sum_n
x_n\mu_n$ diverges.
\end{theorem}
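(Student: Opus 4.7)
The plan is to treat parts (i) and (ii) as a standard Sturm--Liouville-type analysis of the three-term recurrence $\mathcal A u = au$, combined with a probabilistic identification via Laplace transforms of hitting times.

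For part (i), I first rewrite the eigenvalue equation at site $i\ge 1$,
\[ \beta_i u(x_{i+1}) - (\beta_i+\delta_i) u(x_i) + \delta_i u(x_{i-1}) = a\, u(x_i), \]
using the two bookkeeping identities $\delta_i\mu_i = \beta_{i-1}\mu_{i-1}$ and $\beta_i\mu_i (x_{i+1}-x_i) = 1$, which follow directly from \eqref{eq3.5} and \eqref{eq3.7}. Setting $d_i = u(x_{i+1}) - u(x_i)$, this produces the telescoping identity
\[ \beta_i\mu_i d_i - \beta_{i-1}\mu_{i-1} d_{i-1} = a\mu_i u(x_i). \]
The boundary convention $\delta_0 = 0$ forces $\beta_0 d_0 = a u(x_0) = a$, hence $u(x_1) = 1 + a/\beta_0$, after which the three-term recurrence determines every $u(x_n)$ uniquely, giving existence and uniqueness. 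Summing the telescoping identity from $0$ to $n-1$ yields $\beta_{n-1}\mu_{n-1} d_{n-1} = a\sum_{k=0}^{n-1}\mu_k u(x_k)$; a further summation together with an interchange of the order of summation produces \eqref{eq3.6}. Strict monotonicity follows by induction: $d_0 = a/\beta_0 > 0$, and if $d_{i-1}>0$ and $u(x_i)>0$, the telescoping identity gives $d_i>0$, so $u(x_{i+1})>u(x_i)>0$. To prove \eqref{eq3.8}, consider the Laplace--Stieltjes transform $\hat F_i(a) := \int_0^\infty e^{-at}\,dF_i(t)$. Conditioning on the first jump and invoking the strong Markov property shows that $\hat F_i(a)$ satisfies $\mathcal A\hat F = a\hat F$ on $0\le i<n$ with boundary value $\hat F_n(a)=1$; since $u(x_i)/u(x_n)$ solves the same two-point boundary value problem and the latter has a unique solution, the two must coincide.

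For part (ii), I define $v$ by the stated formula (convergence of the defining series needs a priori growth of $u$, addressed below). A direct telescoping computation yields the discrete Wronskian identity
\[ u(x_{i+1}) v(x_i) - u(x_i) v(x_{i+1}) = x_{i+1}-x_i = \frac{1}{\beta_i\mu_i}. \]
Solving this for $v(x_{i\pm 1})$ in terms of $v(x_i)$ and substituting into $\delta_i v(x_{i-1}) + \beta_i v(x_{i+1})$ yields $(\delta_i+\beta_i+a) v(x_i)$ after invoking the $u$-recurrence and using $\delta_i/(\beta_{i-1}\mu_{i-1}) = 1/\mu_i$; thus $\mathcal A v = av$. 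Strict decrease of $v$ follows from the corresponding telescoping identity: writing $e_i = v(x_{i+1})-v(x_i)$ and summing $\beta_i\mu_i e_i - \beta_{i-1}\mu_{i-1} e_{i-1} = a\mu_i v(x_i)$ from $i\ge n$, together with $v(x_i)\to 0$, gives $\beta_{n-1}\mu_{n-1} e_{n-1} = -a\sum_{i\ge n}\mu_i v(x_i) < 0$. For \eqref{eq3.9}, I truncate the state space at $N$ by making $N$ absorbing; the Laplace transform $\hat G_n^{(N)}(a)$ of the truncated analogue of $G_n$ satisfies $\mathcal A \hat G^{(N)} = a\hat G^{(N)}$ on $1\le n<N$ with $\hat G_0^{(N)}=1$ and $\hat G_N^{(N)}=0$. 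Since $u$ and $v$ span the two-dimensional solution space, linear algebra gives
\[ \hat G_n^{(N)}(a) = \frac{u(x_N) v(x_n) - v(x_N) u(x_n)}{u(x_N) v(x_0) - v(x_N)}. \]
Dividing numerator and denominator by $u(x_N)$ and sending $N\to\infty$, one uses $v(x_N)/u(x_N)\to 0$ (this ratio equals the tail $\sum_{j\ge N}(x_{j+1}-x_j)/(u(x_j) u(x_{j+1}))$ of a convergent series) to conclude $\hat G_n(a) = v(x_n)/v(x_0)$.

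The main obstacle is the asymptotic control required to justify $v(x_n)\to 0$ under the hypotheses $x_\infty=\infty$ and $\sum x_n\mu_n=\infty$. From \eqref{eq3.6} one has the crude lower bound $u(x_n)\ge a\, x_n$, and iterating the integral representation one can improve this to a bound of the form $u(x_n)\gtrsim a\sum_{k<n}(x_n-x_k)\mu_k$, which diverges under the hypotheses. Combined with a careful estimation of the tail, these bounds force the defining series for $v$ to converge and the weighted tail $u(x_n)\sum_{j\ge n}(x_{j+1}-x_j)/(u(x_j) u(x_{j+1}))$ to vanish. This is the classical Feller dichotomy for boundary behavior of birth-and-death processes, and pinning down the quantitative estimates needed to conclude $v(x_n)\to 0$ is the most technically demanding step of the argument.
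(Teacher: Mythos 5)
This statement is quoted from Feller's paper \cite{Fel59}; the paper under review gives no proof of it, so there is no internal argument to compare against --- you are reconstructing Feller's proof. Your overall architecture is the right one and matches the classical treatment: the summation-by-parts identity $\beta_i\mu_i d_i-\beta_{i-1}\mu_{i-1}d_{i-1}=a\mu_i u(x_i)$ together with $\beta_i\mu_i(x_{i+1}-x_i)=1$ and $\delta_i\mu_i=\beta_{i-1}\mu_{i-1}$ gives existence, uniqueness, monotonicity and \eqref{eq3.6}; the first-jump/uniqueness argument for \eqref{eq3.8} is correct (the chain killed at $n$ lives on a finite set, so no explosion issues arise); and the Wronskian identity $u(x_{i+1})v(x_i)-u(x_i)v(x_{i+1})=x_{i+1}-x_i$ plus truncation at an absorbing level $N$ and the limit $v(x_N)/u(x_N)=S_N\to0$ correctly yields \eqref{eq3.9}.

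There are, however, two genuine soft spots in part (ii). First, your proof that $v$ is strictly decreasing sums the telescoping identity over $i\ge n$ and invokes ``$v(x_i)\to0$''; but that limit is precisely the assertion you prove only at the end, and only under the extra hypotheses $x_\infty=\infty$, $\sum x_n\mu_n=\infty$, whereas the monotonicity claim is unconditional. Moreover the summation also requires the boundary term $\beta_M\mu_M e_M$ to vanish and $\sum\mu_i v(x_i)$ to converge, neither of which is justified. Second, you explicitly leave the final claim $\lim_n v(x_n)=0$ as an unresolved ``technically demanding step.'' Both gaps are closed by one short estimate. From \eqref{eq3.6} one has $u(x_{j+1})-u(x_j)=a(x_{j+1}-x_j)\sum_{k\le j}\mu_k u(x_k)$, hence for $j\ge n+1$,
$$
\frac{x_{j+1}-x_j}{u(x_j)u(x_{j+1})}\le\frac{1}{a\sum_{k\le n}\mu_k u(x_k)}\left(\frac{1}{u(x_j)}-\frac{1}{u(x_{j+1})}\right),
$$
so that $S_{n+1}\le\bigl(a\,u(x_{n+1})\sum_{k\le n}\mu_k u(x_k)\bigr)^{-1}$ and therefore
$$
v(x_n)-v(x_{n+1})=\frac{x_{n+1}-x_n}{u(x_{n+1})}-\bigl(u(x_{n+1})-u(x_n)\bigr)S_{n+1}\ge0,
$$
with strict inequality because the bound on $S_{n+1}$ is strict (the partial sums $\sum_{k\le j}\mu_k u(x_k)$ strictly increase). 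The same estimate gives $v(x_{n+1})\le\bigl(a\sum_{k\le n}\mu_k u(x_k)\bigr)^{-1}\le\bigl(a^2\sum_{k\le n}x_k\mu_k\bigr)^{-1}$ using $u(x_k)\ge1+ax_k$, which tends to $0$ when $\sum x_k\mu_k$ diverges. With these insertions your reconstruction is complete and correct.
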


The following statement is contained in Feller's paper as well, but not
explicitly; for that reason we formulate it separately.

\begin{corollary}\label{3.6}
If $x_\infty=\infty$ then
$\lim_{n\to\infty}u(x_n)=\infty$.
\end{corollary}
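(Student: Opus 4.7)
The plan is to extract a lower bound for $u(x_n)$ directly from the identity \eqref{eq3.6}, using only positivity of the various ingredients. The key observation is that Theorem \ref{3.5}(i) already tells us $u$ is strictly increasing with $u(x_0)=1$, so $u(x_k)\ge 1$ for all $k\ge 0$; also $\mu_k>0$ and $x_n-x_k>0$ for $k<n$, so every summand in \eqref{eq3.6} is nonnegative.

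Isolating the $k=0$ term in \eqref{eq3.6}, and recalling that $x_0=0$ and $\mu_0=1$, the $k=0$ summand is exactly $u(x_0)(x_n-x_0)\mu_0=x_n$. Dropping all other (nonnegative) summands therefore yields
$$
u(x_n)\ge 1+ax_n,\qquad n\ge 1.
$$
Under the hypothesis $x_\infty=\infty$, i.e.\ $x_n\to\infty$, this immediately gives $u(x_n)\to\infty$, which is the desired conclusion.

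There is no real obstacle; the only thing one must be careful about is checking that the $k=0$ term really does contribute $x_n$ (which uses the normalizations $x_0=0$, $u(x_0)=1$, $\mu_0=1$ stated in \eqref{eq3.5} and \eqref{eq3.7}), and that the remaining terms are nonnegative (which follows from the monotonicity claim in Theorem \ref{3.5}(i) and from $a>0$). Thus the corollary is a one-line consequence of the identity \eqref{eq3.6} once Theorem \ref{3.5}(i) is in hand.
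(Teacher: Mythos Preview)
Your proof is correct and follows essentially the same approach as the paper: both use the identity \eqref{eq3.6} together with positivity to obtain the bound $u(x_n)\ge 1+ax_n$. Your version is in fact slightly more direct, since you isolate the single $k=0$ term, whereas the paper first replaces all $u(x_k)$ by $1$ and then performs a double-sum rearrangement to reach the same inequality.
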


\begin{proof} Let us estimate the sum in the right-hand side of \eqref{eq3.6}:
\begin{multline}
\sum_{k=0}^{n-1}u(x_k)(x_n-x_k)\mu_k\ge
\sum_{k=0}^{n-1}(x_n-x_k)\mu_k=\sum_{k=0}^{n-1}\sum_{l=k+1}^n(x_l-x_{l-1})\mu_k\\=
\sum_{l=1}^n\sum_{k=0}^{l-1}(x_l-x_{l-1})\mu_k=\sum_{l=0}^{n-1}(x_{l+1}-x_l)
\sum_{k=0}^l\mu_k \ge\sum_{l=0}^{n-1}(x_{l+1}-x_l)=x_n.
\end{multline}
Hence, $u(x_n)\ge 1+ax_n$, and the statement follows.
\end{proof}

Let us now apply Feller's results to our situation.

\begin{proof}[Proof of Theorem \ref{3.4}]
By Proposition \ref{3.3}, in order to show that the minimal solution is
stochastic it suffices to prove that the probability that the first passage
time from 0 to $n$ is below a fixed number, converges to zero as $n\to+\infty$.
Indeed, as shifts $x\to x+\const$ and sign change $x\mapsto -x$ keep our class
of processes intact, similar convergence would automatically hold for passage
times to the left, and also for passage times from any initial position. Denote
the first passage time from 0 to $n$ by $T_n$.

A simple coupling argument shows that $T_n$ stochastically dominates the first
passage time from 0 to $n$ for the birth and death process on $\Z_{\ge 0}$ with
the same transition rates (see \eqref{eq3.4}), except that the jump from $0$ to
$-1$ is forbidden. Let us denote this new first passage time by $\wt T_n$.
Thus,
$$
\Prob\{T_n\le t\}\le \Prob\{\wt T_n\le t\} \quad\text{for any $n\ge 0$ and
$t>0$}.
$$
For the application of Theorem \ref{3.4} we then set
$$
\beta_x=(x-u)(x-u'),\quad x\ge 0,\qquad \delta_x=(x+v)(x+v'),\quad x\ge
1,\qquad \delta_0=0.
$$
As
$$
\frac{\delta_1\cdots\delta_l}{\beta_0\cdots\beta_l}=\const
\frac{\Gamma(v+l+1)\Gamma(v'+l+1)}{\Gamma(-u+l+1)\Gamma(-u'+l+1)}\sim
\const\cdot\, l^{u+u'+v+v'},\quad l\to\infty,
$$
our original assumption $u+u'+v+v'>-1$ implies
\begin{equation}\label{eq3.10}
x_k\sim\const\cdot\, k^{u+u'+v+v'+1},\qquad k\to\infty,
\end{equation}
cf. \eqref{eq3.5}, and $x_\infty=\lim_{k\to \infty}x_k=\infty$. Therefore,
Corollary \ref{3.6} yields
$$
\lim_{n\to\infty}u(x_n)=\infty.
$$

On the other hand, from \eqref{eq3.8} with $i=0$ and any $a>0$ we obtain
$$
\frac 1{u(x_n)}=\int_0^\infty e^{-a\tau}dF_0(\tau)\ge \int_0^t e^{-a\tau}
dF_0(\tau)\ge e^{-at}\int_0^t dF_0(\tau)= e^{-at}\Prob \{\wt T_n\le t\}
$$
whence
$$
\Prob\{\wt T_n\le t\}\le \frac {e^{at}}{u(x_n)}\to 0,\qquad n\to +\infty.
$$
Since $\wt T_n$ is dominated by $T_n$, we have shown that our Markov chain does
not run away to infinity in finite time, and hence it is uniquely specified by the
transition rates. Let $(P_1(t))_{t\ge 0}$ be the corresponding semigroup.

We now need to prove that $(P_1(t))_{t\ge 0}$ is Feller. This is equivalent to
showing that $\lim_{n\to\pm\infty}P_1(t;n,i)=0$ for any $i\in\Z$ and $t>0$.

Shift and sign change invariance (see the beginning of the proof) imply that it
suffices to consider $i=0$ and $n\to+\infty$. Observe that $P_1(t;n,0)$ cannot
be greater than the probability that the first passage time from $n$ to $0$ is
not more than $t$. Let us denote this first passage time by $S_n$; we have
$P_1(t;n,0)\le \Prob\{S_n\le t\}$.

This first passage time is the same for our birth and death process on $\Z$ and
for its modification on $\Z_{\ge 0}$ that was used in the first part of the
proof. On the other hand, for the process on $\Z_{\ge 0}$ the Laplace transform
of $S_n$ is given by \eqref{eq3.9}.

By \eqref{eq3.7} we have, as $k\to\infty$,
$$
\mu_k=\frac{\beta_0\cdots\beta_{k-1}}{\delta_1\cdots\delta_k}=
\const\frac{\Gamma(-u+k)\Gamma(-u'+k)}{\Gamma(v+k+1)\Gamma(v'+k+1)}\sim \const
\cdot\, k^{-2-u-u'-v-v'}.
$$
Hence, cf. \eqref{eq3.10}
$$
x_k\mu_k\sim \const\cdot\, k^{-1}, \qquad k\to\infty,
$$
with a nonzero constant, and $\sum_n x_n\mu_n$ diverges. Theorem \ref{3.5}(ii)
then gives
$$
\lim_{n\to\infty}v(x_n)=0
$$
and using \eqref{eq3.9} and estimating the Laplace transform as above we obtain
$$
\Prob\{S_n\le t\}\le \frac{e^{at}v(x_n)}{v(x_0)}\to 0,\qquad n\to\infty.
$$
As $P_1(t;n,0)\le \Prob\{S_n\le t\}$, the proof of Theorem \ref{3.4} is
complete.
\end{proof}

\subsection{The case of general $N$}\label{The case of general N}

Let $N>1$ be a positive integer, and let $(u,u')$ and $(v,v')$ be as in
Subsection \ref{Case N=1}.

Define a matrix $\bigl[\D^{(N)}(\la,\nu)\bigr]_{\la,\nu\in\gt_N}$ of transition
rates with rows and columns parameterized by points of $E_N=\gt_N$ via
\begin{multline}\label{eq3.11}
\D^{(N)}(\la,\nu)=\frac{\di_N(\nu)}{\di_N(\la)}\Bigl(\D(l_1,n_1)
\text{\bf 1}_{\{l_i=n_i,i\ne 1\}}+\D(l_2,n_2)\text{\bf 1}_{\{l_i=n_i,i\ne 2\}}+
\dots\\+\D(l_N,n_N)\text{\bf 1}_{\{l_i=n_i,i\ne N\}}\Bigr) -d_N\cdot\text{\bf
1}_{\la=\nu}
\end{multline}
with $l_j=\la_j+N-j$, $n_j=\nu_j+N-j$, $1\le j\le N$, matrix
$\D(\,\cdot\,,\,\cdot\,)$ as in \eqref{eq3.4}, and
\begin{equation}\label{eq3.12}
d_N=\frac{N(N-1)(N-2)}{3}-(u+u'+v+v')\,\frac{N(N-1)}2.
\end{equation}

In other words, an off-diagonal element $\D^{(N)}(\la,\nu)$ can only be nonzero
if there exists exactly one index $i$ such that $\nu_i-\la_i=\pm 1$ while for
all other indices $j$ we have $\la_j=\nu_j$. Under this condition
$$
\D^{(N)}(\la,\nu)=\begin{cases} (l_i-u)(l_i-u')\prod\limits_{j\ne i}\dfrac
{l_i+1-l_j}{l_i-l_j},
& \text{if  }\nu_i-\la_i=1,\\
(l_i+v)(l_i+v')\prod\limits_{j\ne i}\dfrac {l_i-1-l_j}{l_i-l_j},&\text{if
}\nu_i-\la_i=-1.
\end{cases}
$$
With this explicit description, the diagonal entries of $\D^{(N)}$ have to be
defined by
\begin{equation}\label{eq3.13}
\D^{(N)}(\la,\la)=-\sum_{\nu\in\gt_N:\,\nu\ne \la} \D^{(N)}(\la,\nu),\qquad
\la\in\gt_N.
\end{equation}
The fact that \eqref{eq3.13} holds for $\D^{(N)}$ defined by \eqref{eq3.11}
will be proved in Step 1 of the proof of the following theorem.

\begin{theorem}\label{3.7}
The matrix of transition rates
$\D^{(N)}$ is regular. The corresponding semigroup $(P_N(t))_{t\ge 0}$ has the
form
\begin{equation}\label{eq3.14}
P_N(t;\la,\nu)=e^{-d_Nt}\, \frac{\di_N(\nu)}{\di_N(\la)} \det\bigl[
P_1(t;\la_i+N-i,\nu_j+N-j)\bigr]_{i,j=1}^N, \qquad \la,\nu\in\gt_N,
\end{equation}
with $(P_1(t))_{t\ge 0}$ as in Subsection \ref{Case N=1}. Moreover, this
semigroup is Feller.
\end{theorem}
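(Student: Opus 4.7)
The plan is to produce $(P_N(t))_{t\ge 0}$ explicitly from the determinantal formula in \eqref{eq3.14} and then identify it with the (unique) semigroup generated by $\D^{(N)}$. The main algebraic input is the eigenvalue identity: the Vandermonde $h(l_1,\dots,l_N):=\prod_{i<j}(l_i-l_j)$, viewed as an antisymmetric polynomial on $\Z^N$, is an eigenfunction of $L_N:=\sum_{i=1}^N\D_{(i)}$ (the generator of $N$ independent copies of the chain from Subsection \ref{Case N=1}) with eigenvalue $d_N$. This follows because $\D$ preserves the space of polynomials in $x$ and acts upper-triangularly on the monomial basis $\{x^k\}$ with diagonal coefficients $k(k-1-s)$, where $s=u+u'+v+v'$; the Vandermonde, being the lowest-degree antisymmetric polynomial in $N$ variables, must be a genuine eigenvector, and summing the diagonal eigenvalues over $k=0,\dots,N-1$ gives $\sum_{k=0}^{N-1}k(k-1-s)=d_N$. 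This identity is precisely what reconciles \eqref{eq3.11} and \eqref{eq3.13}, since the off-diagonal row sum of $\D^{(N)}$ at $\la$ rewrites as $(L_Nh)(\la)/h(\la)-\sum_i\D(l_i,l_i)=d_N-\sum_i\D(l_i,l_i)$, with blocking automatically encoded by the vanishing of $h$ on diagonals $l_i=l_j$.

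Let $\wt P_N(t;\la,\nu)$ denote the right-hand side of \eqref{eq3.14}. I will check in turn that (a) $\wt P_N\ge 0$, (b) $\wt P_N$ satisfies Chapman--Kolmogorov, (c) its rows sum to $1$, and (d) it solves Kolmogorov's backward equation with rates $\D^{(N)}$. Property (a) is Karlin--McGregor applied to the nearest-neighbor chain $(P_1(t))$: paths from strictly decreasing $(l_i)$ to strictly decreasing $(n_j)$ are non-colliding iff order-preserving, so $\det[P_1(t;l_i,n_j)]\ge 0$. Property (b) is Cauchy--Binet combined with the semigroup property of $P_1$, after the $\di_N$-prefactors telescope. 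Property (c) is equivalent to the integrated eigenvalue identity
\[
\sum_{n\in\Z^N}h(n)\prod_{i=1}^N P_1(t;l_i,n_i)=e^{td_N}h(l),
\]
obtained by antisymmetrizing the sum $\sum_\nu\di_N(\nu)\det[P_1(t;l_i,n_j)]$ using antisymmetry of $h$; absolute convergence requires polynomial moment bounds $\sum_n P_1(t;l,n)|n|^k<\infty$, which follow by a Gronwall-type argument on $M_k(t):=\sum_n P_1(t;l,n)|n|^k$ exploiting the quadratic form of $\D$ together with the Feller property of $P_1$ from Theorem \ref{3.4}. Property (d) is a direct computation: multilinearity of the determinant reduces $\partial_t\wt P_N$ to a sum over $i$ of modified determinants with the $i$-th row evaluated at $l_i\pm1$; these either have two equal rows (encoding blocking) or correspond to genuine signatures $\la\pm\delta_i^\pm$, and the Vandermonde prefactors reassemble exactly into $\D^{(N)}(\la,\la\pm\delta_i^\pm)\wt P_N(t;\la\pm\delta_i^\pm,\nu)$.

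With (a)--(d) in hand, $\wt P_N$ is a stochastic Markov semigroup with rates $\D^{(N)}$. Theorem \ref{3.1}(iii) gives $\overline P_N\le\wt P_N$ entrywise for the minimal solution, and summing together with (c) yields $\sum_\nu\overline P_N\le 1=\sum_\nu\wt P_N$. To upgrade this to equality I will invoke Proposition \ref{3.3}: the exit time of the minimal process from a finite set $X\ni\la$ coincides (as a random variable) with the exit time of the $\wt P_N$-process, and the latter is stochastically dominated by the exit time of $N$ independent copies of the Feller chain $(P_1(t))$, so Feller-ness from Theorem \ref{3.4} supplies the needed exit-time bound. Thus $\D^{(N)}$ is regular and $P_N(t)=\wt P_N(t)$. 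For the Feller property, decay of $P_N(t;\la,\nu)$ as $\la\to\infty$ (which in $\gt_N$ means $\la_1\to+\infty$ or $\la_N\to-\infty$) reduces, via the determinantal expansion, to the decay $P_1(t;m,k)\to 0$ as $|m|\to\infty$ from Theorem \ref{3.4}; the prefactor $\di_N(\nu)/\di_N(\la)$ grows at most polynomially in $\max_i|l_i|$ and is absorbed by the quantitative Laplace-transform bounds for $P_1$ already extracted in the proof of Theorem \ref{3.4}. Strong continuity at $t=0$ is immediate on the dense subset of finitely supported functions in $C_0(\gt_N)$.

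The main obstacle will be property (c) and the attendant moment control on $P_1(t;l,\cdot)$: the pointwise identity $L_Nh=d_Nh$ is algebraic, but the integrated form requires a delicate exchange of sum and semigroup action against a polynomially growing test function. A clean resolution is to establish the identity first for eigenfunctions in the Askey--Lesky polynomial basis of $\D$ (on which the semigroup acts diagonally and moments are trivial), and then pass to the Vandermonde by expanding it in this basis. A secondary difficulty is making the $\la\to\infty$ decay of $P_N(t;\la,\nu)$ uniform enough to overcome the polynomial blowup of $\di_N(\nu)/\di_N(\la)$; this will require revisiting Feller's estimates in the proof of Theorem \ref{3.4} to extract an explicit rate.
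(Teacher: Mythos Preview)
Your overall architecture---Vandermonde eigenvalue identity, Karlin--McGregor for nonnegativity, and the row-sum computation via the polynomial eigenstructure of $\D$---matches the paper's. The substantive divergence is in how you pass from ``$\wt P_N$ is a stochastic solution of the backward equation'' to ``$\D^{(N)}$ is regular and $\overline P_N=\wt P_N$''. You invoke Proposition~\ref{3.3} and claim the exit time of the $\wt P_N$-process from a finite box is stochastically dominated by that of $N$ independent $P_1$-walkers. This is not justified and is plausibly false: under the Doob $h$-transform with $h=V_N$, the jump rate of the extremal coordinate $l_1$ to the right picks up the factor $\prod_{j>1}\frac{l_1+1-l_j}{l_1-l_j}>1$, so the transformed process pushes its outermost particles outward \emph{faster} than independent copies would. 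The comparison you need goes the other way, and without it the appeal to Proposition~\ref{3.3} is a genuine gap.

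The paper sidesteps this entirely. Rather than verifying (a)--(d) for $\wt P_N$ and then trying to match it with the minimal solution, it computes the minimal solution directly: writing $\overline P_N=\sum_{n\ge0}P_N^{[n]}$, one checks by induction on $n$ that
\[
P_N^{[n]}(t;X,Y)=e^{-d_Nt}\,\frac{V_N(Y)}{V_N(X)}\,\pi_n(t;X,Y),
\]
where $\pi_n(t;X,Y)$ is the probability that $N$ independent $P_1$-walkers go from $X$ to $Y$ in exactly $n$ total jumps without collisions. Summing over $n$ and applying Karlin--McGregor to $\sum_n\pi_n$ gives $\overline P_N=\wt P_N$ on the nose; stochasticity (your (c)) then says the minimal solution is stochastic, which is regularity. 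This route never needs an exit-time comparison, and it also makes your separate checks of Chapman--Kolmogorov and the backward equation unnecessary.

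One further simplification you are missing in the Feller step: the prefactor $\di_N(\nu)/\di_N(\la)$ does not grow as $\la\to\infty$---it is bounded by $\di_N(\nu)$ since $\di_N(\la)\ge1$ for every signature. So no quantitative decay rate for $P_1$ is required; the qualitative statement $P_1(t;m,k)\to0$ as $|m|\to\infty$ from Theorem~\ref{3.4}, applied to one row of the determinant, already gives $P_N(t;\la,\nu)\to0$.
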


\begin{proof} The proof of Theorem \ref{3.7} will consist of several steps.

\medskip

\noindent {\it Step 1.\/} Let us show that with definition \eqref{eq3.11},
relation \eqref{eq3.13} holds. It is convenient to encode signatures of length
$N$ by $N$-tuples of strictly decreasing integers via
$$
\la=(\la_1\ge\la_2\ge\dots\ge\la_N)\longleftrightarrow
(l_1>l_2>\dots>l_N),\qquad l_j=\la_j+N-j, \quad 1\le j\le N.
$$
This establishes a bijection between $\gt_N$ and the set
$$
\X_N=\{(x_1,\dots,x_N)\in \Z^N\mid x_1>x_2>\dots>x_N\}.
$$
In $\X_N$, matrix $\D^{(N)}$ from \eqref{eq3.11} takes the form
\begin{multline}
\D^{(N)}(X,Y)=\frac {V_N(Y)}{V_N(X)}\bigl(\D(x_1,y_1)\text{\bf
1}_{\{x_i=y_i,i\ne 1\}}+\D(x_2,y_2)\text{\bf 1}_{\{x_i=y_i,i\ne
2\}}+\dots\\+\D(x_N,y_N)\text{\bf 1}_{\{x_i=y_i,i\ne N\}}\bigr)-d_N\text{\bf
1}_{X=Y}
\end{multline}
with $X=(x_1,\dots,x_N)\in\X_N$, $Y=(y_1,\dots,y_N)\in\X_N$, and
$$
V_N(z_1,\dots,z_N)=\prod_{1\le i<j\le N} (z_i-z_j).
$$

In this notation, \eqref{eq3.13} is equivalent to
\begin{equation}\label{eq3.15}
(\D_1+\dots+\D_N)V_N(X)=d_NV_N(X),\qquad X\in\Z^N,
\end{equation}
where $\D_{j}$ denotes a linear operator on $\Z^N$ with
$$
\D_j(X,Y)=\D(x_j,y_j) \text{\bf 1}_{\{x_i=y_i,i\ne j\}}.
$$
Indeed, both sides of \eqref{eq3.15} are skew-symmetric, and restricting to
$\X_N$ yields \eqref{eq3.13}.

Let $\Delta$ and $\nabla$ be the standard forward and backward difference
operators on $\Z$:
$$
\Delta f(x)=f(x+1)-f(x),\qquad \nabla f(x)=f(x)-f(x-1)
$$
for any function $f:\Z\to\C$. Note that $\Delta\nabla=\Delta-\nabla$.

One easily checks that the operator $\D$ with matrix \eqref{eq3.4} has the form
\begin{equation}\label{eq3.15a}
\D=\sigma\Delta\nabla+\tau\Delta
\end{equation}
with
$$
\sigma=(x+v)(x+v'),\quad \tau=sx+(uu'-vv'),\quad s=-(u+u'+v+v').
$$
Hence, for any $m=0,1,2,\dots$
\begin{equation}\label{eq3.16}
\D x^m=\bigl(m(m-1)+sm\bigr)\cdot x^m+\text{lower degree terms},
\end{equation}
in paticular, $\D$ preserves the degree of a polynomial. This implies that the
left-hand side of \eqref{eq3.15} is a skew-symmetric polynomial of degree at
most $N(N-1)/2$. It must be divisible by the Vandermonde determinant $V_N(X)$,
and it remains to verify the constant prefactor. Following the highest in
lexicographic order term $x_1^{N-1}x_2^{N-2}\cdots x_N^0$ we see that upon the
action of $(\D_1+\dots+\D_N)$ it collects the coefficient
$$
\sum_{j=0}^{N-1} \bigl(j(j-1)+sj\bigr),
$$
which sums to \eqref{eq3.12}. Thus, \eqref{eq3.13} is proved.

\medskip

\noindent {\it Step 2.\/} Let us now prove that
\begin{equation}\label{eq3.17}
\sum_{\nu\in\gt_N} P_N(t;\la,\nu)=1,\qquad \la\in\gt_N,\quad t\ge 0,
\end{equation}
with $P_N$ as in \eqref{eq3.14}. In the space $\X_N$, \eqref{eq3.14} reads
\begin{equation}\label{eq3.18}
P_N(t;X,Y)=e^{-d_Nt}\, \frac{V_N(Y)}{V_N(X)} \det\bigl[
P_1(t;x_i,y_j)\bigr]_{i,j=1}^N, \qquad X,Y\in\X_N.
\end{equation}

Since the action of $\D$ in the space of polynomials $\R[x]$ is consistent with
filtration by degree, see \eqref{eq3.16}, the action of the corresponding
semigroup $(P_1(t))_{t\ge 0}$ in $\R[x]$ is well-defined, and \eqref{eq3.16}
implies
$$
\sum_{y\in\Z} P_1(t;x,y)y^m=e^{(m(m-1)+sm)t}\,x^m+\text{lower degree terms}.
$$
We obtain
\begin{multline}
\sum_{Y\in\X_N}P_N(t;X,Y)=\frac
1{N!}\sum_{Y\in\Z^N}P_N(t;X,Y)\\=\frac{e^{-d_Nt}}{V(X)}\sum_{\sigma\in S_N}
\operatorname{sgn}\sigma\sum_{Y\in\Z^N} P_1(t;x_{\sigma(1)},y_1)\cdots
P_1(t;x_{\sigma(N)},y_N)
y_1^{N-1}y_2^{N-2}\cdots y_{N-1}\\
=\frac{e^{-d_Nt}}{V(X)}\sum_{\sigma\in S_N}\operatorname{sgn}\sigma\,
e^{t\sum_{j=0}^{N-1} (j(j-1)+sj)}x_{\sigma(1)}^{N-1}
x_{\sigma(2)}^{N-2}\cdots x_{\sigma(N-1)}=1,
\end{multline}
where $S_N$ denotes the group of permutations of $\{1,\dots,N\}$. Note that the
first equality (change of the summation domain) holds because the expression
for $P_N(t;X,Y)$ is symmetric in $(y_j)$, and it vanishes if $y_i=y_j$ for
$i\ne j$.

\medskip

\noindent {\it Step 3.\/} Consider $N$ independent copies of the bilateral
birth and death process of Subsection \ref{Case N=1}, and denote by
$\pi_n(t;X,Y)$, $X,Y\in\X_N$, the probability that these processes started at
$x_1,\dots,x_N$ end up at $y_1,\dots,y_N$ after time $t$ having made a total of
$n$ jumps all together, and their trajectories had no common points at any time
moment between $0$ and $t$. We want to show that
\begin{equation}\label{eq3.19}
P^{[n]}_N(t;X,Y)=e^{-d_N t} \frac{V(Y)}{V(X)}\,\pi_n(t;X,Y),
\end{equation}
where $P^{[n]}_N$ is defined as in Section \ref{Generalities on Markov} using
$\D^{(N)}$ as the matrix of transition rates.

Indeed, computing $\pi_n$'s boils down to recurrence relations
$$
\gathered
\pi_0(t;X,Y)=e^{t\D_{ind}^{(N)}(X,Y)}\text{\bf 1}_{X=Y},\\
\pi_n(t;X,Y)=\int_0^t e^{\tau\D_{ind}^{(N)}(X,X)}\sum_{Z\in\X_N,\,Z\ne X}
\D_{ind}^{(N)}(X,Z) \pi_{n-1}(t-\tau;Z,Y)d\tau,\quad n\ge 1,
\endgathered
$$
where $\D_{ind}^{(N)}=\D_1+\dots+\D_N$ is the matrix of transition rates for
the $N$ independent birth and death processes.

For $n=0$, \eqref{eq3.19} follows from \eqref{eq3.11}. Assuming \eqref{eq3.19}
holds for $n-1$, we rewrite the recurrence relation for $\pi_n$'s as
\begin{multline} \pi_n(t;X,Y)=\int_0^t e^{\tau(\D^{(N)}(X,X)+d_N)}\\
\times \sum_{Z\in\X_N,\,Z\ne X} \frac{V(X)}{V(Z)}\,\D^{(N)}(X,Z) \cdot
e^{d_N(t-\tau)} \frac{V(Z)}{V(Y)}\, P_N^{[n-1]}(t-\tau;Z,Y)\,d\tau.
\end{multline}
Comparing with the recurrence relation for $P^{[n]}$, cf. Section
\ref{Generalities on Markov}, yields \eqref{eq3.19}.

\medskip

\noindent {\it Step 4.\/} Following Section \ref{Generalities on Markov} and
using \eqref{eq3.19}, we see that the minimal solution for the backward
equation with $\D^{(N)}$ as the matrix of transition rates, has the form
$$
\overline{P_N}(t;X,Y)=\sum_{n=0}^\infty P^{[n]}_N(t;X,Y)=e^{-d_N
t}\frac{V(Y)}{V(X)} \sum_{n=0}^\infty \pi_n(t;X,Y).
$$
The last sum is clearly equal to the probability that $N$ independent copies of
the bilateral birth and death process of Subsection \ref{Case N=1} started at
$x_1,\dots,x_N$ end up at $y_1,\dots,y_N$ after time $t$ without intermediate
coincidences and without any restriction on the number of jumps. Note that we
are using the fact that the birth and death process does not make infinitely
many jumps in finite time (minimal solution is stochastic), cf. Theorem
\ref{3.4}.

Such a probability of having nonintersecting paths is given by a celebrated
formula of Karlin-McGregor \cite{KM59}:
$$
\sum_{n=0}^\infty \pi_n(t;X,Y)=\det\bigl[ P_1(t;x_i,y_j)\bigr]_{i,j=1}^N,
\qquad X,Y,\in\X_N.
$$
Hence, the minimal solution $\overline{P_N}(t;X,Y)$ coincides with the
right-hand side of \eqref{eq3.18}, and by Step 2 it is stochastic. We have thus
shown that the matrix $\D^{(N)}$ of transition rates on $\gt_N$ is regular, and
the semigroup has the form \eqref{eq3.14} (or \eqref{eq3.18}).

\medskip

\noindent {\it Step 5.\/} To conclude the proof of Theorem \ref{3.7} it remains
to show that the Markov semigroup $(P_N(t))_{t\ge 0}$ is Feller. This is
equivalent to proving that
\begin{equation}\label{eq3.20}
\lim_{\la\to\infty}P_N(t;\la,\nu)=0,\qquad t\ge 0,\quad\nu\in\gt_N.
\end{equation}
But this immediately follows from \eqref{eq3.14} because we already know that
\eqref{eq3.20} holds for $N=1$ (Theorem \ref{3.4}), and $\di_N(\la)$ is always
at least 1.

\end{proof}

\section{Commutativity}\label{Commutativity}

The goal of this section is to address the question of compatibility of the
semigroups of Section \ref{Semigroups} and links of Section
\ref{Specialization}, cf. \eqref{eq1.3}.

\subsection{Parameterization}

As we shall see, in order for the commutativity relations \eqref{eq1.3} to be
satisfied, the parameters $(u,u',v,v')$ used to define semigroups
$(P_N(t))_{t\ge 0}$ need to depend on $N$. For that reason, introduce two new
pairs of parameters $(z,z')$ and $(w,w')$ that satisfy the same conditions as
$(u,u',v,v')$ before:
\begin{gather}\label{eq4.1}
(z+k)(z'+k)>0,\quad (w+k)(w'+k)>0\quad \forall k\in\Z;\qquad z+z'+w+w'>-1.
\end{gather}

Furthermore, for $N\ge 1$ define
\begin{equation}\label{eq4.2}
u_N=z+N-1,\quad u_N'=z'+N-1,\quad v=w,\quad v'=w',
\end{equation}
and let $(P_N(t))_{t\ge 0}$ be the Feller semigroup of the previous section
with parameters $(u,u',v,v')=(u_N,u'_N,v_N,v'_N)$.

We are aiming to prove the following statement.

\begin{theorem}\label{4.1}
With links ${\{\Lambda^{N+1}_N\}}_{N\ge 1}$ as in Subsection \ref{Spaces and
links} and semigroups $(P_N(t))_{t\ge 0}$ as above, the compatibility
relations \eqref{eq1.3} hold.
\end{theorem}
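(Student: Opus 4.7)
The plan is to reduce the semigroup-level identity to the matrix-level (infinitesimal) relation
\begin{equation*}
\D^{(N+1)} \Lambda^{N+1}_N = \Lambda^{N+1}_N \D^{(N)},
\end{equation*}
and then to verify this relation by direct computation, as the authors forewarn.

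For the reduction, set $M(t) := P_{N+1}(t) \Lambda^{N+1}_N$ and $M'(t) := \Lambda^{N+1}_N P_N(t)$, so that $M(0) = M'(0) = \Lambda^{N+1}_N$. Since each row of $\D^{(\cdot)}$ has finitely many nonzero entries, both $P_N$ and $P_{N+1}$ satisfy Kolmogorov's backward equation; together with the infinitesimal relation this yields $\partial_t M = \D^{(N+1)} M$ and $\partial_t M' = \D^{(N+1)} M'$. For fixed $\kappa \in \gt_N$, the functions $\la \mapsto M(t; \la, \kappa)$ and $\la \mapsto M'(t; \la, \kappa)$ both lie in $C_0(\gt_{N+1})$: for the first, because $\Lambda^{N+1}_N(\cdot, \kappa) \in C_0(\gt_{N+1})$ by Proposition \ref{2.2} and $(P_{N+1}(t))$ is Feller by Theorem \ref{3.7}; for the second, because $P_N(t; \cdot, \kappa) \in C_0(\gt_N)$ by the Feller property of $(P_N(t))$, to which Proposition \ref{2.2} then applies. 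Hence both solve the abstract Cauchy problem for the Feller semigroup $(P_{N+1}(t))$ with common initial datum $\Lambda^{N+1}_N(\cdot, \kappa)$, and the uniqueness of strongly continuous solutions forces $M \equiv M'$.

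For the infinitesimal identity itself, fix $\la \in \gt_{N+1}$ and $\kappa \in \gt_N$. Both $(\D^{(N+1)} \Lambda^{N+1}_N)(\la, \kappa)$ and $(\Lambda^{N+1}_N \D^{(N)})(\la, \kappa)$ are finite sums: on the left, over $\mu \in \gt_{N+1}$ that either equal $\la$ or differ from $\la$ by $\pm 1$ in one coordinate, subject to $\kappa \prec \mu$; on the right, analogously over $\nu \in \gt_N$, subject to $\nu \prec \la$. Dividing both sides by the common factor $\di_N(\kappa)/\di_{N+1}(\la)$ yields a rational identity in the shifted coordinates $l_i = \la_i + N + 1 - i$ and $k_j = \kappa_j + N - j$, in which the one-dimensional birth-and-death rates $(x-u)(x-u')$ and $(x+v)(x+v')$ appear with parameters $(z+N-1, z'+N-1, w, w')$ on the level-$N$ side and $(z+N, z'+N, w, w')$ on the level-$(N+1)$ side.

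The main obstacle is the brute-force verification of this rational identity. The contributions group naturally by which coordinate moves, but a case analysis is required at the boundary indices where $\kappa_j = \la_j$ or $\kappa_j = \la_{j+1}$: there interlacing blocks certain jumps, and the missing nondiagonal terms on one side must be compensated by diagonal $\D^{(\cdot)}$ contributions on the other side. The decisive algebraic input is that the shift of $(u,u')$ by one unit as $N \to N+1$ prescribed by \eqref{eq4.2} exactly balances the additional Vandermonde factor introduced by $\Lambda^{N+1}_N$ against those already present in $\D^{(N)}$ and $\D^{(N+1)}$ via the Doob $h$-transform with $h = \di_N$. Once the bookkeeping is organized, the identity reduces to elementary polynomial identities of degree two in $(l_i, k_j)$, which can be verified directly.
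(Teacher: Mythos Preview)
Your overall plan matches the paper's: establish the infinitesimal relation $\wt\D^{(N+1)}\Lambda^{N+1}_N=\Lambda^{N+1}_N\D^{(N)}$ (Proposition~\ref{4.2}) and then lift it to semigroups via uniqueness for a Cauchy problem. But both halves of your argument have real gaps.

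\textbf{The reduction.} To invoke uniqueness for the abstract Cauchy problem $F'(t)=A_{N+1}F(t)$ in $C_0(\gt_{N+1})$ you need $F(t)\in D(A_{N+1})$ for all $t\ge0$, not merely $F(t)\in C_0$. For $M(t)=P_{N+1}(t)\,\Lambda^{N+1}_N\delta_\kappa$ this requires $\Lambda^{N+1}_N\delta_\kappa\in D(A_{N+1})$, which is the content of Lemma~\ref{4.6} in the paper --- a nontrivial estimate showing that $\D^{(N+1)}(\Lambda^{N+1}_N\delta_\kappa)\in C_0(\gt_{N+1})$, after which Proposition~\ref{3.3a} identifies the matrix action with the generator. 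For $M'(t)=\Lambda^{N+1}_N P_N(t)\delta_\kappa$ you likewise need to pass from the entrywise identity $\partial_t M'=\D^{(N+1)}M'$ to the statement $M'(t)\in D(A_{N+1})$, again via Proposition~\ref{3.3a}. Your sentence ``the uniqueness of strongly continuous solutions forces $M\equiv M'$'' hides exactly this work; without it the backward equation alone does not determine the solution, since $\D^{(N+1)}$ is unbounded.

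\textbf{The infinitesimal identity.} You propose a direct case analysis but do not carry it out; ``reduces to elementary polynomial identities of degree two \dots\ which can be verified directly'' is an assertion, not a proof. The paper does \emph{not} proceed this way. It introduces a basis $\{F_{\mu,c}\}$ of symmetric polynomials in $N$ variables (a shifted-Schur variant), shows that $\Lambda^{N+1}_N$ carries $F_{\mu,v}$ to its $(N+1)$-variable counterpart $G_{\mu,v}$ (Lemma~\ref{4.4}), computes $\D^{(N)}F_{\mu,v}$ explicitly as a two-term expansion (Lemma~\ref{4.5}), and then checks that the resulting coefficients match those for $\wt\D^{(N+1)}G_{\mu,v}$. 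The specialization $c=v$ is the key trick: it collapses $\D(x+c)^{\downarrow m}$ to just two falling-factorial terms, which is what makes the coefficient comparison manageable. Your proposed boundary-case bookkeeping may be feasible, but nothing in what you wrote demonstrates it, and the paper's route avoids that case analysis entirely by working in the right polynomial basis.
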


\subsection{Infinitesimal commutativity}

We first prove a version of \eqref{eq1.3} that involves matrices of transition
rates.

\begin{proposition}\label{4.2}
For any $N\ge 1$, $u,u',v,v'\in\C$, and
$\la\in\gt_{N+1},\nu\in\gt_{N}$, we have
\begin{equation}\label{eq4.3}
\sum_{\kappa\in \gt_{N+1}}
\widetilde\D^{(N+1)}(\la,\kappa)\Lambda^{N+1}_{N}(\kappa,\nu)=
\sum_{\rho\in\gt_{N}}\Lambda^{N+1}_{N}(\la,\rho)\D^{(N)}(\rho,\nu)
\end{equation}
or, in matrix notation,
$\widetilde\D^{(N+1)}\Lambda^{N+1}_{N}=\Lambda^{N+1}_{N}\D^{(N)}$,
where $\D^{(N)}$ is the operator defined by \eqref{eq3.11}, and in
$\widetilde\D^{(N+1)}$ we replace $N$ by $N+1$ and the parameters $(u,u')$ by
$(\tilde u,\tilde u')=(u+1,u'+1)$.
\end{proposition}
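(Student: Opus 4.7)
My plan is to verify the identity by a direct computation in the strict-partition ($l$-)coordinates introduced in Step~1 of the proof of Theorem~\ref{3.7}. Writing $\la\in\gt_{N+1}\leftrightarrow L=(L_1>\cdots>L_{N+1})\in\X_{N+1}$ with $L_i=\la_i+(N+1-i)$, and likewise $\nu\leftrightarrow n\in\X_N$, the interlacing $\nu\prec\la$ becomes $L_{i+1}\le n_i\le L_i-1$. The dimension factors in $\D^{(N)}$, $\wt\D^{(N+1)}$, and $\Lambda^{N+1}_N$ all reduce (up to absolute constants) to ratios of the Vandermonde determinants $V_N(n)=\prod_{i<j}(n_i-n_j)$ and $V_{N+1}(L)$; after dividing both sides of \eqref{eq4.3} by the common factor $N!\,V_N(n)/V_{N+1}(L)$, the identity to be checked becomes an equality of explicit rational expressions in the $L_i$ and $n_j$, modified by indicator functions that enforce interlacing.

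Next, I would expand both sides as finite sums of elementary contributions. On the left only $M=L$ and the one-step neighbors $M=L^{(i,\pm)}$ (obtained by sending $L_i\mapsto L_i\pm 1$) give nonzero $\wt\D^{(N+1)}(L,M)$, and each such $M$ is further restricted by $n\prec M$; similarly on the right only $\rho=\nu$ and $\rho=\nu^{(j,\pm)}$ contribute, subject to $\rho\prec\la$. Each nonzero summand carries a one-dimensional birth--death rate (with the shifted parameters $(u+1,u'+1,v,v')$ on the left, or $(u,u',v,v')$ on the right) multiplied by a product of factors $(L_i\pm 1-L_k)/(L_i-L_k)$, respectively $(n_j\pm 1-n_l)/(n_j-n_l)$, coming from the Vandermonde quotient.

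The heart of the argument then reduces to a polynomial identity in the variables $L_1,\dots,L_{N+1},n_1,\dots,n_N$. The relevant rational sums can be evaluated using the Lagrange interpolation identity $\sum_i p(L_i)\prod_{j\ne i}(z-L_j)/(L_i-L_j)=p(z)$ valid for $\deg p\le N$, together with its standard variants and partial-fraction analogues for sums indexed by the $n_j$. The parameter shift $(\wt u,\wt u')=(u+1,u'+1)$ is precisely what is needed to align the quadratic polynomials $(L_i-\wt u)(L_i-\wt u')$ appearing on the left with the corresponding quadratics on the right, and the diagonal discrepancy $d_{N+1}-d_N$ is absorbed by the constant term of the resulting polynomial evaluation.

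The main obstacle is the careful treatment of the boundary cases of the interlacing region. A proposed move $L_i\mapsto L_i+1$ on the left is forbidden by the condition $n\prec M$ exactly when $L_i=n_{i-1}$, and similarly for left moves and for the moves of the $n$ coordinates on the right, so the naive polynomial identity acquires boundary corrections. Showing that these boundary corrections on the two sides cancel precisely against each other --- together with the bulk polynomial contributions and the constants $d_{N}$, $d_{N+1}$ --- is the delicate part of the computation, and is apparently the reason the authors describe the proof of the commutativity relation as a ``brute force computational argument''.
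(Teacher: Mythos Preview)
Your proposal is a reasonable outline of a direct matrix-entry verification, but it is not a proof: the crucial step---showing that the boundary corrections from the interlacing constraints cancel between the two sides---is only named, not carried out. You yourself call this ``the delicate part of the computation'' and leave it undone; the Lagrange-interpolation identities you mention handle the bulk polynomial terms but give no mechanism for the indicator corrections when a proposed move $L_i\mapsto L_i\pm1$ or $n_j\mapsto n_j\pm1$ collides with an interlacing boundary. Until those cancellations are exhibited explicitly, the argument is incomplete.

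The paper avoids this difficulty entirely by working dually. Rather than comparing matrix entries, it tests both sides of \eqref{eq4.3} against a family of symmetric polynomials $F_{\mu,c}$ in the column variable $\nu$. A short lemma (Lemma~\ref{4.3}) shows this suffices. The link $\Lambda^{N+1}_N$ sends $F_{\mu,c}$ to an analogous polynomial $G_{\mu,c}$ in the $\la$-variables via a simple summation identity (Lemma~\ref{4.4}), and with the special choice $c=v$ the operator $\D^{(N)}$ acts on $F_{\mu,v}$ by a two-term recursion in $\mu$ (Lemma~\ref{4.5}); the same holds for $\wt\D^{(N+1)}$ acting on $G_{\mu,v}$. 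The proof then reduces to checking that two explicit lists of scalar coefficients agree---a finite computation with no boundary cases at all. The parameter shift $(\tilde u,\tilde u')=(u+1,u'+1)$ and the constant $d_{N+1}-d_N$ fall out of this coefficient comparison rather than having to be engineered to absorb residual terms. Your direct route may well succeed with enough bookkeeping, but the paper's approach trades the boundary analysis for a clean algebraic identity on a well-chosen basis.
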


\begin{proof} We start with the following simple lemma.

\begin{lemma}\label{4.3}
Let $\bigl[A(\la,\nu)\bigr]_{\la\in\gt_{N+1},\,\nu\in\gt_{N}}$ be a matrix with
rows parameterized by $\gt_{N+1}$ and columns parameterized by $\gt_{N}$, and
such that each row of $A$ has finitely many nonzero entries. If for any
symmetric polynomial $F$ in $N$ variables and any $\la\in\gt_{N+1}$ we have
\begin{equation}\label{eq4.4}
\sum_{\nu\in\gt_{N}}A(\la,\nu)F(\nu_1+N-1,\nu_2+N-2,\dots,\nu_{N})=0,
\end{equation}
then $A(\la,\nu)\equiv 0$.
\end{lemma}

\begin{proof} Assume $A(\hat\la,\hat\nu)\ne 0$ for some $\hat\la$ and $\hat\nu$.
Let $\nu^{(1)},\dots,\nu^{(l)}\in\gt_{N}$ be all signatures different from
$\hat\nu$ and such that $A(\hat\la,\nu^{(j)})\ne 0$.

Set $x=(\hat\nu_1+N-1,\dots,\hat\nu_{N})\in \Z^{N}$ and
$$
y^{(j)}=\bigl(\nu^{(j)}_1+N-1,\dots,\nu^{(j)}_{N}\bigr)\in\Z^{N},\qquad
j=1,\dots,l.
$$
Observe that the orbits of the vectors $x, y^{(1)},\dots,y^{(l)}$ under the
group of permutations of the coordinates do not intersect. It follows that
there exists a polynomial $f$ in $N$ variables, which takes value $1$ on the
orbit of $x$ and vanishes on the orbits of the vectors $y^{(1)},\dots,y^{(l)}$.
Then for the symmetrized polynomial $F(z_1,\dots,z_{N})=\sum_{\sigma\in S_{N}}
f(z_{\sigma(1)},\dots,z_{\sigma(N)})$ the left-hand side of \eqref{eq4.4} is
equal to $N!A(\hat\la,\hat\nu)\ne 0$. Contradiction.
\end{proof}

Let us now introduce symmetric polynomials on which we will evaluate (in the
sense of Lemma \ref{4.3}) both sides of \eqref{eq4.3}. For a partition
(=signature with nonnegative coordinates) $\mu\in\gt_N$ and $c\in\C$ set
\begin{align*}
F_{\mu,c}(x_1,\dots,x_n)&=\frac
1{(N)_\mu}\,\frac{\det\bigl[(x_i+c)^{\downarrow(\mu_j+N-j)}\bigr]_{i,j=1}^N}{\prod_{1\le
i<j\le N}(x_i-x_j)}\,, \\
G_{\mu,c}(x_1,\dots,x_{n+1})&=\frac
1{(N+1)_\mu}\,\frac{\det\bigl[(x_i+c)^{\downarrow(\mu_j+N+1-j)}\bigr]_{i,j=1}^{N+1}}{\prod_{1\le
i<j\le N+1}(x_i-x_j)}\,, \
\end{align*}
where we assume $\mu_{N+1}=0$ and use the notation ($a\in\C$, $k\in\Z_{\ge 0}$)
\begin{gather*}
a^{\downarrow k}=a(a-1)\cdots(a-k+1),\quad
(a)_k=a(a+1)\cdots(a+k-1),
\quad a^{\downarrow 0}=(a)_0=1,\\
(a)_\mu=\prod_{j=1}^N (a-j+1)_{\mu_j}.
\end{gather*}
Clearly, $F_{\mu,c}$ and $G_{\mu,c}$ are symmetric polynomials in $N$ and $N+1$
variables, respectively. Moreover, for any fixed $c\in\C$, the polynomials
$\{F_{\mu,c}\}$ with $\mu$ ranging over all nonnegative signatures in $\gt_N$
form a linear basis in the space of all symmetric polynomials in $N$ variables.
Indeed, this follows from the fact that the highest degree homogeneous
component of $F_{\mu,c}$ coincides with the {\it Schur polynomial\/}
$s_\mu(x_1,\dots,x_N)$, and those are well known to form a basis, see e.g.
\cite{Macd95}.

Hence, to prove Proposition \ref{4.2} it suffices to verify that the two sides
of \eqref{eq4.3} give the same results when applied to $F_{\mu,c}$ for a fixed
$c$ and $\mu$ varying over nonnegative signatures of length $N$.

\begin{lemma}\label{4.4}
For any $\lambda\in\gt_{N+1}$, any nonnegative signature
$\mu\in\gt_N$, and $c\in\C$, we have
$$
\sum_{\nu\in\gt_N} \Lambda^{N+1}_N(\la,\nu)
F_{\mu,c}(\nu_1+N-1,\dots,\nu_N)=G_{\mu,c}(\la_1+N,\la_2+N-1,\dots,\la_{N+1}).
$$
\end{lemma}

\begin{proof} The argument is similar to that for relation (10.30) in
\cite{OO97}. Denote
$$
(x_1,\dots,x_{N+1})=(\la_1+N,\dots,\la_{N+1}),\quad
(y_1,\dots,y_N)=(\nu_1+N-1,\dots,\nu_N).
$$
Then $\nu\prec\la$ means $x_{i+1}\le y_i< x_i$ for all $i=1,\dots,N$. Taking
into account the definition of $\Lambda^{N+1}_N$, one sees that the relation in
question is equivalent to the following one
\begin{multline}\label{eq4.5}
\det\bigl[(x_i+c)^{\downarrow(\mu_j+N+1-j)}\bigr]_{i,j=1}^{N+1}
\\=\frac{(N+1)_\mu N!}{(N)_\mu}\sum_{\substack{y_1,\dots,y_N\in\Z\\
x_{i+1}\le y_i< x_i\text{  for all  } i}}
\det\bigl[(y_i+c)^{\downarrow(\mu_j+N-j)}\bigr]_{i,j=1}^N.
\end{multline}

The last column in the $(N+1)\times (N+1)$ matrix in the left-hand side of
\eqref{eq4.5} consists of 1's. Subtracting from the $i$th row the $(i+1)$st one
for each $i=1,\dots,N$, we see that the left-hand side is equal to the $N\times
N$ determinant
$$
\det\Bigl[(x_i+c+1)^{\downarrow(\mu_j+N+1-j)}-
(x_{i+1}+c+1)^{\downarrow(\mu_j+N+1-j)} \Bigr]_{i,j=1}^N.
$$
On the other hand, the summation in the right-hand side of \eqref{eq4.5} can be
performed in each row separately using the relation
$$
\sum_{y=a}^{b-1}(y+c)^{\downarrow m}=\frac{(b+c)^{\downarrow
(m+1)}-(a+c)^{\downarrow (m+1)}}{m+1}\,.
$$
Collecting constant prefactors completes the proof of Lemma \ref{4.4}:
\begin{multline} \frac{(N+1)_\mu
N!}{(N)_\mu\prod_{j=1}^N(\mu_j+N-j+1)}\\
=N!\prod_{j=1}^N\frac{(\mu_j+N+1-j)!(N-j)!}
{(N+1-j)!(\mu_j+N-j)!(\mu_j+N-j+1)}=1.
\end{multline}
\end{proof}

To conclude the proof of Proposition \ref{4.2} we want to prove that, for a
suitable fixed constant $c\in\C$, $\D^{(N)} F_{\mu,c}$ decompose on
$\{F_{\nu,c}\}$ in exactly the same way as $\wt\D^{(N+1)}G_{\mu,c}$ decompose
on $\{G_{\nu,c}\}$.

It is actually convenient to take $c=v$, where $v$ is one of the four
parameters $(u,u',v,v')$. With this specialization we prove

\begin{lemma}\label{4.5}
For any $\la\in\gt_N$ and any nonnegative signature
$\mu\in\gt_N$, with the notation $m_j=\mu_j+N-j$, $j=1,\dots,N$, we have
\begin{multline}
\sum_{\nu\in\gt_N} \D^{(N)}(\la,\nu)
F_{\mu,v}(\nu_1+N-1,\dots,\nu_N)\\= \left(\sum_{j=1}^N m_j(m_j-1)+s\sum_{j=1}^N
m_j-d_N\right)F_{\mu,v}(\la_1+N-1,\dots,\la_N)\\
+\sum_{j=1}^N \Bigl((m_j-1)(v'-v+m_j-1)+s(m_j-v-1)+uu'-vv'\Bigr)\text{\bf
1}_{\mu_j-1\ge
\mu_{j+1}}\\
\times F_{\mu-e_j,v}(\la_1+N-1,\dots,\la_N),
\end{multline}
where $d_N$ is as in \eqref{eq3.12}, $e_j=(0,\dots,0,1,0,\dots,0)$ with $1$ at
the $j$th place, and we assume $\mu_{N+1}=0$.
\end{lemma}

\begin{proof}
 We first compute, cf. \eqref{eq3.15a},
\begin{multline}
\mathcal D(x+v)^{\downarrow m}=(x+v)(x+v')\Delta\nabla
(x+v)^{\downarrow m}+(sx+uu'-vv')\Delta (y+v)^{\downarrow
m}\\=m(m-1)(x+v')(x+v)^{\downarrow(m-1)}+m(sx+uu'-vv')(x+v)^{\downarrow(m-1)}.
\end{multline}
This is the place where the choice of $c=v$ matters; for different values of
$c$ the expression for $\mathcal D(x+c)^{\downarrow m}$ would have been more
complicated.

Substituting
$$
\gathered x+v'=(x+v-m+1)+(v'-v+m-1),\\
sx+uu'-vv'=s(x+v-m+1)+(s(m-v-1)+uu'-vv'),
\endgathered
$$
we obtain
\begin{multline}
\mathcal D(x+v)^{\downarrow m}=\bigl(m(m-1)+sm\bigr)(x+v)^{\downarrow
m}\\+ \bigl((m-1)(v'-v+m-1)+s(m-v-1)+uu'-vv'\bigr)m(x+v)^{\downarrow(m-1)}.
\end{multline}
The statement now follows from \eqref{eq3.11} and the definition of
$F_{\mu,c}$.

\end{proof}

Let us complete the proof of Proposition \ref{4.2}.

Apply both sides of \eqref{eq4.3} to $F_{\mu,v}$ in the sense of Lemma
\ref{4.3}. Using Lemma \ref{4.4} we see that the left-hand side of
\eqref{eq4.3} turns into
$$
\sum_{\kappa\in \gt_{N+1}}
\widetilde\D^{(N+1)}(\la,\kappa)G_{\mu,v}(\kappa_1+N,\dots,\kappa_{n+1}),
$$
and repeating the arguments of Lemma \ref{4.5} we see that this is equal to
\begin{multline}\label{eq4.6}
\left(\sum_{j=1}^N \tilde m_j(\tilde m_j-1)+\tilde s\sum_{j=1}^N
\tilde m_j-\tilde d_{N+1}\right)G_{\mu,v}(\la_1+N-1,\dots,\la_N)\\
+\sum_{j=1}^N \Bigl((\tilde m_j-1)(v'- v+\tilde m_j-1)+\tilde s(\tilde
m_j-\tilde v-1)+\tilde u\tilde u'-vv' \Bigr)\text{\bf 1}_{\mu_j-1\ge
\mu_{j+1}}\\
\times G_{\mu-\delta_j,v}(\la_1+N-1,\dots,\la_N),
\end{multline}
where $\tilde m_j=\mu_j+N+1-1=m_j+1$, and tildes over the other constants mean
that in their definitions we replace $(u,u')$ by $(\tilde u,\tilde
u')=(u+1,u'+1)$.

On the other hand, by Lemmas \ref{4.4} and \ref{4.5} the right-hand side of
\eqref{eq4.3} equals

\begin{multline}\label{eq4.7}
\left(\sum_{j=1}^N m_j(m_j-1)+s\sum_{j=1}^N
m_j-d_N\right)G_{\mu,v}(\la_1+N-1,\dots,\la_N)\\
+\sum_{j=1}^N
\Bigl((m_j-1)(v'-v+m_j-1)+s(m_j-v-1)+uu'-vv'\Bigr)\\
\times \text{\bf 1}_{\mu_j-1\ge
\mu_{j+1}}G_{\mu-\delta_j,v}(\la_1+N-1,\dots,\la_N).
\end{multline}

It is a straightforward computation to see that all the coefficients in
\eqref{eq4.6} and \eqref{eq4.7} coincide. The proof of Proposition \ref{4.2} is
complete.

\end{proof}

\subsection{From matrices of transition rates to semigroups}

In order to complete the proof of Theorem \ref{4.1} we need the following
lemma.

\begin{lemma}\label{4.6}
If $f$ is a finitely supported function on $\gt_N$ then $\Lambda^{N+1}_Nf$ is
in the domain of the generator $A_{N+1}$ of the semigroup $(P_{N+1}(t))_{t\ge
0}$ (see Section \ref{Generalities on Markov} for the definition of the
generator and its domain).
\end{lemma}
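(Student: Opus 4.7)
The plan is to apply Proposition \ref{3.3a} to the candidate function $g := \Lambda^{N+1}_N f$. Since the rate matrix $\D^{(N+1)}$ has only finitely many nonzero entries in each row (any nontrivial transition changes exactly one coordinate by $\pm 1$), that proposition reduces membership of $g$ in $D(A_{N+1})$ to the two pointwise conditions $g \in C_0(\gt_{N+1})$ and $\D^{(N+1)} g \in C_0(\gt_{N+1})$, and then automatically gives $A_{N+1} g = \D^{(N+1)} g$.

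The first condition is immediate: since $\gt_N$ is discrete, any finitely supported $f$ lies in $C_0(\gt_N)$, and the Feller property of the link (Proposition \ref{2.2}) yields $g = \Lambda^{N+1}_N f \in C_0(\gt_{N+1})$.

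For the second condition I would use the infinitesimal commutativity of Proposition \ref{4.2}, which asserts the matrix identity $\widetilde\D^{(N+1)} \Lambda^{N+1}_N = \Lambda^{N+1}_N \D^{(N)}$ with $\widetilde\D^{(N+1)}$ built from the shifted parameters $(u+1, u'+1, v, v')$. With the $N$-dependent prescription \eqref{eq4.2}, namely $u_N = z+N-1$, $u_N' = z'+N-1$, $v = w$, $v' = w'$, the shift $(u,u') \mapsto (u+1, u'+1)$ is exactly the shift from level $N$ to level $N+1$, so $\widetilde\D^{(N+1)}$ coincides with the $\D^{(N+1)}$ associated with our semigroup. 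No convergence issues arise in applying this identity to the finitely supported $f$, and we obtain
$$
\D^{(N+1)} g \;=\; \D^{(N+1)} \Lambda^{N+1}_N f \;=\; \Lambda^{N+1}_N \bigl(\D^{(N)} f\bigr)
$$
as functions on $\gt_{N+1}$.

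It remains to observe that $\D^{(N)} f$ is itself finitely supported: for each fixed $\nu$ in the support of $f$ there are only finitely many $\la \in \gt_N$ (those equal to $\nu$ or differing from it by $\pm 1$ in a single coordinate) for which $\D^{(N)}(\la,\nu) \ne 0$. Hence $\D^{(N)} f \in C_0(\gt_N)$, and one more application of Proposition \ref{2.2} places $\Lambda^{N+1}_N(\D^{(N)} f)$ in $C_0(\gt_{N+1})$. Both hypotheses of Proposition \ref{3.3a} are then met, so $g \in D(A_{N+1})$ with $A_{N+1} g = \Lambda^{N+1}_N \D^{(N)} f$. The only nonroutine point in the argument is bookkeeping: checking that the parameter shift built into Proposition \ref{4.2} lines up precisely with the $N$-dependence in \eqref{eq4.2}, which is exactly what the prescription $u_N = z + N - 1$ is designed for.
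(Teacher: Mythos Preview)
Your proof is correct and is in fact cleaner than the paper's. Both arguments reduce the question, via Proposition~\ref{3.3a} and Proposition~\ref{2.2}, to checking that $\D^{(N+1)}g\in C_0(\gt_{N+1})$; the divergence is in how that check is carried out. The paper takes $f=\delta_\nu$, writes $g$ explicitly, replaces it by the unconstrained $\wt g(\la)=\const/\di_{N+1}(\la)$, uses the eigenfunction relation \eqref{eq3.15} to compute $\D^{(N+1)}\wt g$, and then estimates by hand the boundary corrections coming from the indicator $\text{\bf 1}_{\nu\prec\la}$. You instead invoke the infinitesimal commutativity of Proposition~\ref{4.2} (with the parameter shift matched to \eqref{eq4.2}) to rewrite $\D^{(N+1)}g$ as $\Lambda^{N+1}_N(\D^{(N)}f)$, which is visibly the image under the Feller link of a finitely supported function. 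This bypasses all the explicit analysis; the only cost is that your proof of Lemma~\ref{4.6} now depends on Proposition~\ref{4.2}, whereas the paper's proof keeps the lemma logically independent. Since Proposition~\ref{4.2} is already established at this point and is used anyway in the proof of Theorem~\ref{4.1}, that dependence is harmless.
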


Let us postpone the proof of Lemma \ref{4.6} until the end of this subsection
and proceed with the proof of Theorem \ref{4.1}.

In order to prove \eqref{eq1.3} it suffices to prove that the two sides are
equal when applied to a function $f$ on $\gt_N$ with finite support (as such
are dense in $C_0(\gt_N))$:
\begin{equation}\label{eq4.8}
P_{N+1}(t)\Lambda^{N+1}_{N}f=\Lambda^{N+1}_NP_N(t)f,\qquad t\ge 0,\quad
N=1,2,\dots\,.
\end{equation}
Let us denote the left and right-hand sides of \eqref{eq4.8} by $F_{left}(t)$
and $F_{right}(t)$. We will show that they solve the same Cauchy problem in the
Banach space $C_0(\gt_{N+1})$. Then \eqref{eq4.8} will follow from an abstract
uniqueness theorem for solutions of the Cauchy problem for vector functions
with values in a Banach space,
$$
\frac d{dt} F(t)=A F(t),\quad t>0,\qquad F(0)=\text{fixed vector},
$$
which holds under the assumptions that (1) $A$ is a closed dissipative
operator, (2) $F(t)$ is continuous for $t\ge 0$ and strongly differentiable for
$t>0$, and (3) $F(t)\in D(A)$ for $t\ge0$; see e.g. \cite[IX.1.3]{Kat80}.

In our situation, $A=A_{N+1}$ and the fixed vector is $\La^{N+1}_Nf$.
Obviously, both $F_{left}(t)$ and $F_{right}(t)$ are continuous for $t\ge0$ and
they have the same initial value $\La^{N+1}_Nf$ at $t=0$.

Let us check the differential equation for $F_{left}(t)$. By Lemma \ref{4.6} we
have $\Lambda^{N+1}_N f\in D(A_{N+1})$. Hence, $F_{left}(t)\in D(A_{N+1})$
(semigroups preserve the domains of the generators) and it satisfies
$$
\frac d{dt} F_{left}(t)= A_{N+1} F_{left}(t),\quad t>0.
$$

Let us turn to $F_{right}$. By Corollary \ref{3.3c},  $f$ belongs to $D(A_N)$.
It follows that the function $t\mapsto P_{N}(t)f$ is strongly differentiable
and
$$
\frac d{dt} P_N(t) f=A_N P_N(t) f=\D^{(N)}P_N(t), \qquad t>0.
$$
Hence, $F_{right}(t)$ is also strongly differentiable and for $t>0$
$$
\frac d{dt} F_{right}(t)=\Lambda^{N+1}_N \frac
d{dt}P_N(t)f=\Lambda^{N+1}_N\D^{(N)}P_N(t)f.
$$
By definition, the last expression should be understood as
$\Lambda^{N+1}_N(\D^{(N)}(P_N(t)f))$. However, since all rows of the matrices
$\Lambda^{N+1}_N$ and $\D^{(N)}$ have finitely many nonzero entries, we may
write
$$
\Lambda^{N+1}_N(\D^{(N)}(P_N(t)f))=(\Lambda^{N+1}_N\D^{(N)})P_N(t)f.
$$
By virtue of Proposition \ref{4.2}, this equals
$$
\wt\D^{(N+1)}\Lambda^{N+1}_NP_{N}(t)f=\wt\D^{(N+1)}F_{right}(t),
$$
so that
$$
\frac d{dt} F_{right}(t)=\wt\D^{(N+1)}F_{right}(t).
$$

Next, as $\frac d{dt}F_{right}(t)$ is in $C_0(\gt_{N+1})$, so is $\wt\D^{(N+1)}
F_{right}(t)$. By Proposition \ref{3.3a}, we may replace $\wt\D^{(N+1)}$ by
$A_{N+1}$, which gives the desired differential equation
$$
\frac d{dt} F_{right}(t)= A_{N+1} F_{right}(t),\quad t>0,
$$
and we conclude that $F_{left}=F_{right}$.

Thus, we have proved Theorem \ref{4.1} modulo Lemma \ref{4.6}.

\begin{proof}[Proof of Lemma \ref{4.6}]
Let $f$ be a finitely supported function on $\gt_N$, $g=\Lambda^{N+1}_N f$.
Proposition \ref{2.2} says that $g\in C_0(\gt_{N+1})$, and by Proposition
\ref{3.3a} it suffices to check that $\D^{(N+1)}g\in C_0(\gt_{N+1})$. We have
$$
(\D^{(N+1)}g)(\la)=\sum_{\epsi:\,\la+\epsi\in\gt_{N+1}}
\D^{(N+1)}(\la,\la+\epsilon) \bigl(g(\la+\epsilon)-g(\la)\bigr),
$$
where $\la\in\gt_{N+1}$, $\epsi$ ranges over $\{\pm e_j\}_{j=1,\dots,N+1}$,
with $(e_j)$ being the standard basis in $\R^{N+1}$, and
$\D^{(N+1)}(\la,\la+\epsilon)$ are off-diagonal entries of the matrix
$\D^{(N+1)}$.

Without loss of generality we may assume that $f$ is the delta-function at some
$\nu\in\gt_N$. We obtain
$$
g(\la)=\dfrac{N!\,\prod_{1\le i<j\le N}(\nu_i-i-\nu_j+j)}{\prod_{1\le i<j\le
N+1}(\la_i-i-\la_j+j)}\,\cdot\text{\bf 1}_{\nu\prec\la},
$$
and
\begin{equation}\label{eq4.11}
(\D^{(N+1)}g)(\la)=\sum_{i=1}^{N+1}\sum_{\epsi_i
=\pm1}\frac{\di_{N+1}(\la+\epsi_ie_i)}{\di_{N+1}(\la)}\,
 \D(l_i,l_i+\epsi_i)
\bigl(g(\la+\epsi_ie_i)-g(\la)\bigr)
\end{equation}
where $l_j=\la_j+N+1-j$, $j=1,\dots,N+1$, and we assume
$\di_{N+1}(\la+\epsi_ie_i)=0$ in case $\la+\epsi_ie_i\notin\gt_{N+1}$ (this is
supported by the explicit formula for $\di_{N+1}(\,\cdot\,)$).

Observe that for
$$
\wt g(\la)=\dfrac{N!\,\prod_{1\le i<j\le N}(\nu_i-i-\nu_j+j)}{\prod_{1\le
i<j\le N+1}(\la_i-i-\la_j+j)}=\frac{\const_1}{\di_{N+1}(\la)}
$$
(we removed the factor $\text{\bf 1}_{\nu\prec\la}$ from $g(\la)$ above), we
have

\begin{multline}\label{eq4.12}
(\D^{(N+1)}\wt g)(\la)=\frac{\const_2}{\di_{N+1}^2(\la)}\\
\times\sum_{i=1}^{N+1}\sum_{\epsi_i=\pm1} \D(l_i,l_i+\epsi_i)
\bigl(\di_{N+1}(\la+\epsi_ie_i)-\di_{N+1}(\la)\Bigr)
=\frac{\const_3}{\di_{N+1}(\la)}\,,
\end{multline}
where we used \eqref{eq3.15}.

Next, observe that the function
$$
(\D^{(N+1)}\wt g)(\la)\text{\bf
1}_{\nu\prec\la}=\frac{\const_3}{\di_{N+1}(\la)}\text{\bf 1}_{\nu\prec\la}
$$
belongs to $\C_0(\gt_{N+1})$. Indeed, if $\la$ goes to infinity inside the
subset $\{\la: \nu\prec\la\}$ then $\la_i-\la_j\to+\infty$ for at least one
couple $i<j$ of indices, which entails $\di_{N+1}\la\to+\infty$.

The discrepancy between $(\D^{(N+1)}\wt g)(\la)\text{\bf 1}_{\nu\prec\la}$ and
$(\D^{(N+1)} g)(\la)$ (or rather between the summations in \eqref{eq4.11} and
\eqref{eq4.12}) comes from values of $i$ and $\epsi_i$ such that either
$\nu\prec\la$ but $\nu\not\prec(\la+\epsi_ie_i)$, or $\nu\prec\la+\epsi_ie_i$
but $\nu\not\prec\la$. In both cases, for that value of $i$, the quantities
$\la_i$, $l_i$, and $\D(l_i,l_i+\epsi_i)$ must remain bounded as $\nu$ is
fixed.

Note that $\la\to\infty$ inside the subset
$$
\{\la\in\gt_{N+1}: \text{\rm $\nu\prec\la$ or $\nu\prec\la+\epsi_i e_i$ for some
$i$}\},
$$
then either $\la_1\to +\infty$ or $\la_{N+1}\to-\infty$, or both, while all
other $\la_j$ remain bounded from both sides. But then a direct inspection of
the summands in \eqref{eq4.11} and \eqref{eq4.12} that contribute to the
discrepancy shows that they converge to zero as $\la\to\infty$. Hence,
$(\D^{(N+1)} g)(\la)\in C_0(\gt_{N+1})$.
\end{proof}

\section{Invariant measures}\label{zw-measures}

In three previous sections we defined a chain of countable sets
$\{E_N=\gt_N\}_{N\ge 1}$, constructed links $\Lambda^{N+1}_N$ between then, and
identified the boundary $E_\infty=\Omega$. Furthermore, for any quadruple of
complex parameters $(z,z',w,w')$ satisfying \eqref{eq4.1} we constructed Feller
semigroups $(P_N(t))_{t\ge 0}$ on $\gt_N$ and showed that they are compatible
with the links; by Proposition \ref{1.4} this yields a Feller semigroup
$(P(t))_{t\ge 0}$ on $\Omega$.

The goal of this section is to exhibit an invariant measure for $(P(t))_{t\ge
0}$.

\subsection{$zw$-measures}

Let $z,z',w,w'$ be complex parameters satisfying \eqref{eq4.1}. As was
pointed out in Subsection \ref{Case N=1}, this is equivalent to saying that
each pair $(z,z')$ and $(w,w')$ belongs to one (or both) of the sets
$$
\gathered
\{(\zeta,\zeta')\in(\C\setminus\Z)^2\mid \zeta'=\bar{\zeta}\}\quad\text{and}\quad\\
\{(\zeta,\zeta')\in(\R\setminus\Z)^2\mid m<\zeta,\zeta'<m+1 \text{ for some } m\in\Z\},
\endgathered
$$
and also $z+z'+w+w'>-1$.

For $\la\in\gt_N$ set
$$
M_{z,z',w,w'\mid N}(\la)= (\const_N)^{-1}\cdot M'_{z,z',w,w'\mid N}(\la)
$$
where
\begin{multline}
M'_{z,z',w,w'\mid N}(\la)= \prod_{i=1}^N
\bigg(\frac1{\Gamma(z-\la_i+i)\Gamma(z'-\la_i+i)}\\
\times\frac1{\Gamma(w+N+1+\la_i-i)\Gamma(w'+N+1+\la_i-i)}\bigg)\cdot
(\di_N(\la))^2,
\end{multline}
and
$$
\const_N=\sum_{\la\in\gt_N}M'_{z,z',w,w'\mid N}(\la)
$$
is the normalizing constant depending on $z,z',w,w',N$.

\begin{theorem}[\cite{Ols03}]\label{5.1}
Under our assumptions on the parameters, for any $N\ge 1$, $M_{z,z',w,w'\mid N}$
is a probability measure, we call it the $N$th $zw$-measure. Moreover, these
measures are consistent with the links,
$$
M_{z,z',w,w'\mid N}=M_{z,z',w,w'\mid N+1}\,\Lambda^{N+1}_N, \qquad N\ge 1,
$$
with $\Lambda^{N+1}_N$ as in Subsection \ref{Spaces and links}.
\end{theorem}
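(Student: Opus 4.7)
The plan is to split the proof into three parts: nonnegativity of $M'_{z,z',w,w'\mid N}$, finiteness of the normalizing constant $\const_N$, and the consistency identity. The first two yield that $M_{z,z',w,w'\mid N}$ is a well-defined probability measure; the last is the substantive claim.

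\textbf{Step 1 (Nonnegativity).} I would argue by case analysis on the parameter pairs using the assumption \eqref{eq4.1}. For the pair $(z,z')$: if $z'=\bar z\in\C\setminus\R$, then for every integer $k$ we have $\Gamma(z+k)\Gamma(z'+k)=|\Gamma(z+k)|^2>0$; if instead $z,z'$ lie in a common integer interval $(m,m+1)$, then $z-\la_i+i$ and $z'-\la_i+i$ lie in a common integer interval as well, and $\Gamma$ has constant nonzero sign on each such interval. Hence $\Gamma(z-\la_i+i)\Gamma(z'-\la_i+i)>0$. The same reasoning applied to $(w,w')$ gives positivity of the $\la$-dependent factors; combined with $(\di_N(\la))^2\ge 0$, this yields $M'_N(\la)\ge 0$.

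\textbf{Step 2 (Summability).} The key point is to estimate the weight $W_{z,z',w,w'}(x)$ as $|x|\to\infty$. Using $\Gamma(a)\Gamma(1-a)=\pi/\sin\pi a$ to convert $1/\Gamma$ at large negative arguments to $\Gamma$ at large positive arguments, and then Stirling, one obtains $|W_{z,z',w,w'}(x)|\asymp |x|^{-(z+z'+w+w'+2N)}$ as $|x|\to\infty$ (with possibly different constants at $\pm\infty$). With the Vandermonde factor $\prod_{i<j}(l_i-l_j)^2$ growing like $|x|^{2(N-1)}$ in any single variable, the one-variable tail contribution to $\sum_\la M'_N(\la)$ behaves like $\sum |x|^{-(z+z'+w+w'+2)}$, which converges exactly under the hypothesis $z+z'+w+w'>-1$. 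A multi-variable bound follows by the same estimate applied coordinate-wise.

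\textbf{Step 3 (Consistency).} I see two natural routes. The clean one is representation-theoretic: identify $M_N$ with the coefficients appearing in the expansion of a distinguished central function $\chi$ (built from $(z,z',w,w')$) in normalized irreducible characters of $U(N)$. Because $\chi$ is defined on $U(\infty)$, its restriction to $U(N+1)$ reduces to its restriction to $U(N)$ upon setting $u_{N+1}=1$, and the elementary branching identity
\begin{equation*}
\chi_\la(u_1,\dots,u_N,1)=\sum_{\nu\prec\la}\chi_\nu(u_1,\dots,u_N),\qquad \la\in\gt_{N+1},
\end{equation*}
together with \eqref{eq2.0}, turns the matching of expansion coefficients into exactly the consistency relation $M_N=M_{N+1}\Lambda^{N+1}_N$. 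The nontrivial ingredient here is the identification of the generating function of $M_N$ with such a $\chi$; this is the content of \cite{Ols03}, so I would invoke that.

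\textbf{Main obstacle.} The representation-theoretic identification in Step 3 is the deepest point and is where \cite{Ols03} does the real work. The alternative purely combinatorial route reduces the consistency to showing
\begin{equation*}
\sum_{\la:\,\nu\prec\la}\di_{N+1}(\la)\prod_{i=1}^{N+1}W_{z,z',w,w'}^{[N+1]}(l_i^\la)=C_N\,\di_N(\nu)\prod_{j=1}^N W_{z,z',w,w'}^{[N]}(l_j^\nu)
\end{equation*}
with a $\nu$-independent constant $C_N$; this in turn can be approached by writing $V_{N+1}$ as a determinant, interchanging sum and determinant along the interlacing strips $l_j^\la\in(l_j^\nu,l_{j-1}^\nu]$ (conventions $l_0^\nu=+\infty$, $l_N^\nu=-\infty$ adjusted), and using telescoping via the Gamma-function identity
\begin{equation*}
\sum_{y=a}^{b-1}(y+c)^{\downarrow m}=\frac{(b+c)^{\downarrow(m+1)}-(a+c)^{\downarrow(m+1)}}{m+1},
\end{equation*}
which is precisely the mechanism already used in Lemma~\ref{4.4}. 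The bookkeeping of Gamma ratios connecting $W^{[N+1]}$ to $W^{[N]}$ via $W^{[N+1]}(x)=W^{[N]}(x)/[(z+N-x)(z'+N-x)]$ is the delicate but mechanical part of this second route.
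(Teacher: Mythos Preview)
The paper itself gives no proof of Theorem~\ref{5.1}: it is stated as a citation to \cite{Ols03}, with the subsequent paragraph offering the ``coordinate-free'' description
\[
M_{z,z',w,w'\mid N}(\la)=\frac{(f_{z,w\mid N},\chi_\la)(\chi_\la,f_{\overline{w'},\overline{z'}\mid N})}{(f_{z,w\mid N},f_{\overline{w'},\overline{z'}\mid N})}
\]
as context rather than argument. Your proposal thus goes further than the paper does, and your outline is sound. Steps~1 and~2 are routine and correct (the asymptotics you state for $W$ match what the paper uses later in the proof of Theorem~\ref{5.2}). Your first route in Step~3 is exactly the mechanism behind the coordinate-free formula above: once $M_N$ is recognized as the squared Fourier coefficients of a fixed function on the torus restricted from $U(N+1)$ to $U(N)$, the branching rule $\chi_\la(u,1)=\sum_{\nu\prec\la}\chi_\nu(u)$ delivers consistency immediately, and you correctly flag that the identification itself is the substantive input from \cite{Ols03}. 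Your second, direct-summation route is a genuine alternative; it is more elementary but bookkeeping-heavy, and its virtue is that it avoids the representation-theoretic machinery entirely, at the price of a longer computation that the paper (and \cite{Ols03}) chose not to carry out.
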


Theorem \ref{5.1} implies that the system $(M_{z,z',w,w'\mid N})_{N\ge 1}$
defines a probability measure $M_{z,z',w,w'}$ on the boundary $\Omega$ that we
call the {\it spectral $zw$-measure\/}, cf. Theorem \ref{2.1}, and a character
of the infinite-dimensional unitary group $U(\infty)$, cf.\cite{Ols03}. For
$z'=\bar{z}$ and $w'=\bar{w}$ one can find a geometric construction of the
corresponding representations of $U(\infty)$ in \cite{Ols03}. There is also a
fairly simple ``coordinate-free'' description of general $zw$-measures that we
now give, cf. \cite{BO05c}.

Let $\T$ be the unit circle in $\C$ and $\T^N$ be the product of $N$ copies of
$\T$ (the $N$--dimensional torus). For any $\la\in\gt_N$, the character
$\chi^\la$ of the corresponding irreducible representation $\pi_\la$ of $U(N)$
can be viewed  as a symmetric function on $\T^N$, where coordinates are
interpreted as eigenvalues of unitary matrices. Explicitly, the character is
given by the (rational) Schur function
$$
\chi^\la(u_1,\dots,u_N)=s_\la(u_1,\dots,u_N) =
\frac{\det\bigl[u_i^{\la_j+N-j}\bigr]_{1\le i,j\le N}}
{\det\bigl[u_i^{N-j}\bigr]_{1\le i,j\le N}}\,.
$$
Consider the Hilbert space $H_N$ of symmetric functions on $\T^N$, square
integrable with respect to the measure
$$
\frac1{N!}\, \prod_{1\le i<j\le N}|u_i-u_j|^2 \prod_{i=1}^N du_i\,,
$$
which is the push--forward of the normalized Haar measure on $U(N)$ under the
correspondence $U\mapsto(u_1,\dots,u_N)$. Here $du_i$ is the normalized
invariant measure on the $i$th copy of $\T$.

Given two complex numbers $z,w$, we define a symmetric function on $\T^N$ by
$$
f_{z,w\mid N}(u)=\prod_{i=1}^N (1+u_i)^z(1+\bar u_i)^w.
$$
If $\Re(z+w)>-\frac12$ then $f_{z,w\mid N}$ belongs to the space $H_N$. Let
$(z',w')$ be another couple of complex numbers with $\Re(z'+w')>-\frac12$. We
set
$$
M_{z,z',w,w'\mid N}(\la)=\frac{(f_{z,w\mid N},
\chi_\la)(\chi_\la,f_{\overline{w'},\, \overline{z'}\mid N})} {(f_{z,w\mid
N},f_{\overline{w'}, \,\overline{z'}\mid N})}\,,  \qquad \la\in\gt_N,
$$
where $(\,\cdot\,,\,\cdot\,)$ is the inner product in $H_N$. It turns out that
this definition leads us to the explicit formula given above.

The spectral $zw$-measures were the subject of an extensive investigation in
\cite{BO05a} the upshot of which is the statement that with $\omega\in\Omega$
distributed according to $M_{z,z',w,w'}$, its coordinates
$$
\left\{\tfrac 12+\alpha_i^+,\tfrac 12-\beta_i^+, -\tfrac 12+\beta^-_i,-\tfrac
12-\alpha_i^-\right\}_{i=1}^\infty
$$
(where possible zero values of $\alpha^\pm_i$ and $\beta^\pm_i$ should be
removed) form a {\it determinantal point process\/} on $\R\setminus\{\pm \frac
12\}$ with an explicit correlation kernel. See \cite{BO05a}, \cite{BO05b} for
details.

\subsection{Invariance}

The main statement of this section is

\begin{theorem}\label{5.2}
For any quadruple $(z,z',w,w')$ of parameters satisfying \eqref{eq4.1}, the
spectral $zw$-measure $M_{z,z',w,w'}$ is the unique invariant probability
measure with respect to the semigroup $(P(t))_{t\ge 0}$.
\end{theorem}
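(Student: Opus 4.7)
\textbf{Proof plan for Theorem \ref{5.2}.}

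The plan is to reduce the statement to its finite-level analogues and then invoke the abstract framework of Subsection \ref{Invariant measures}. By Theorem \ref{5.1}, the measures $M_N := M_{z,z',w,w'\mid N}$ are compatible with the links $\Lambda^{N+1}_N$, and by Theorem \ref{2.1} they determine a unique measure on $\Omega$ which, by construction, is $M_{z,z',w,w'}$. It therefore suffices to verify two assertions at each finite level: (i) $M_N$ is invariant under $(P_N(t))_{t\ge 0}$, and (ii) $M_N$ is the unique invariant probability measure of $(P_N(t))_{t\ge 0}$ on $\gt_N$. The concluding discussion of Subsection \ref{Invariant measures} then yields both existence and uniqueness of the invariant measure on $\Omega$.

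For (i), I would first establish detailed balance at the rate level, $M_N(\la)\D^{(N)}(\la,\nu)=M_N(\nu)\D^{(N)}(\nu,\la)$. Since $\D^{(N)}(\la,\nu)$ is nonzero only when $\nu$ differs from $\la$ by a unit shift in a single coordinate, and since $M_N(\la)\propto\di_N(\la)^2\prod_iW(l_i)$ while $\D^{(N)}$ is the Doob $h$-transform of $N$ independent copies of $\D$ with $h=\di_N$, the $\di_N(\la)\di_N(\nu)$ factors cancel and the identity reduces to the level-one statement $W(x)(x-u)(x-u')=W(x+1)(x+1+v)(x+1+v')$ with parameters $(u,u',v,v')=(z+N-1,z'+N-1,w,w')$. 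Using $\Gamma(a+1)=a\Gamma(a)$ this is a short telescoping computation. To pass from rate-level reversibility to semigroup-level invariance I would use uniqueness: the kernel $\tilde P_N(t;\la,\nu):=M_N(\nu)P_N(t;\nu,\la)/M_N(\la)$ is easily seen to be Feller, to satisfy the Chapman--Kolmogorov equation, and to have the same generator $\D^{(N)}$ on the finitely supported functions. By the regularity and uniqueness half of Theorem \ref{3.7}, $\tilde P_N(t)=P_N(t)$, so $M_N(\la)P_N(t;\la,\nu)=M_N(\nu)P_N(t;\nu,\la)$; summing over $\la$ and using $\sum_\la P_N(t;\nu,\la)=1$ delivers $M_N P_N(t)=M_N$.

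For (ii), I would check that the chain governed by $\D^{(N)}$ is irreducible on $\gt_N$: every off-diagonal rate corresponding to a legal unit shift of a single coordinate is strictly positive (thanks to the positivity conditions \eqref{eq4.1} on the parameters), and any two signatures in $\gt_N$ are connected by a finite path of such unit moves. An irreducible continuous-time Markov chain on a countable state space that admits an invariant probability measure is positive recurrent, hence its invariant probability is unique. The last sentence of Subsection \ref{Invariant measures} then upgrades uniqueness at every finite level to uniqueness on $\Omega$, finishing the proof. The main obstacle I anticipate is the passage from infinitesimal reversibility (straightforward) to semigroup reversibility; for unbounded rate matrices this is generally delicate and can fail in the absence of uniqueness, which is precisely why the Feller regularity secured in Theorem \ref{3.7} is crucial here. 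Once that uniqueness is in hand, everything else is either a direct algebraic verification or a standard appeal to positive recurrence.
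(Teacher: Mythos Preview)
Your overall strategy matches the paper's exactly: reduce to finite levels via Subsection~\ref{Invariant measures}, establish that each $M_N$ is $(P_N(t))$-invariant, and deduce uniqueness from irreducibility of the chain on $\gt_N$. For part~(ii) your argument is the same as the paper's, which simply invokes irreducibility and a standard result (Theorem~1.6 of \cite{And91}).

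For part~(i), your route to the rate-level identity is in fact \emph{more} direct than the paper's. The paper verifies $\sum_{\la}M_N(\la)\D^{(N)}(\la,\nu)=0$ by expanding the Vandermonde determinant in Askey--Lesky orthogonal polynomials, which are eigenfunctions of the one-particle operator $\D$; you instead note that the detailed-balance relation $M_N(\la)\D^{(N)}(\la,\nu)=M_N(\nu)\D^{(N)}(\nu,\la)$ reduces to a one-variable Gamma identity, and that summing it over $\la$ already gives $M_N\D^{(N)}=0$. This bypasses the orthogonal-polynomial computation entirely.

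The only place that needs more care is precisely the step you flagged. Your proposed argument via the time-reversed kernel $\tilde P_N(t;\la,\nu)=M_N(\nu)P_N(t;\nu,\la)/M_N(\la)$ has a small circularity: to invoke uniqueness from Corollary~\ref{3.2} you would need $\tilde P_N(t)$ to be substochastic, i.e.\ $(M_NP_N(t))(\la)\le M_N(\la)$, which is essentially the invariance you are trying to prove; and the Feller property for $\tilde P_N$ is not obvious, since $M_N(\la)\to0$ in the denominator as $\la\to\infty$. The fix is to use \emph{minimality} rather than uniqueness: $\tilde P_N(t)$ is a nonnegative solution of the backward equation with the correct initial data, so the minimal solution satisfies $P_N(t;\la,\nu)\le\tilde P_N(t;\la,\nu)$; interchanging $\la$ and $\nu$ yields the reverse inequality, hence equality and thus semigroup reversibility. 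This is essentially the content of the reference \cite{Kel83} that the paper cites at this point.
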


\begin{proof} Let us prove the invariance first. By Subsection \ref{Invariant measures}, it suffices to verify
that for each $N\ge 1$, the $N$th level $zw$-measure is invariant with respect
to $(P_N(t))_{t\ge 0}$. We will check this fact on the level of matrices of
transition rates:
$$
\sum_{\la\in\gt_N} M_{z,z',w,w'\mid N}(\la)\,\D^{(N)}(\la,\nu)=0,\qquad N\ge
1,\quad \nu\in\gt_N.
$$
Since it is easy to check that $\D^{(N)}$ is reversible with respect to
$M_{z,z',w,w'\mid N}(\la)$,
$$
M_{z,z',w,w'\mid N}(\la)\,\D^{(N)}(\la,\nu)=\D^{(N)}(\la,\nu)\,M_{z,z',w,w'\mid
N}(\nu),\qquad \la,\nu\in\gt_N,
$$
an argument in Section 3 of \cite{Kel83} shows that the invariance on the level
of transition rates implies the invariance with respect to the corresponding
semigroup.

As in the proof of Theorem \ref{3.7}, it is convenient to employ the bijection
$\la\leftrightarrow (\la_j+N-j)_{1\le j\le N}$ between $\gt_N$ and $\X_N$, see
Subsection \ref{The case of general N} for the notation and also recall that we
are using parameterization \eqref{eq4.2}. Under the bijection of $\gt_N$ and
$\X_N$, the desired identity takes the form (removing irrelevant prefactors)

\begin{multline}\label{eq5.1}
\sum_{X\in\X_N} \left(\prod_{i=1}^N W(x_i)\right) V_N(X)
\Bigl(\bigl(\D(x_1,y_1)\text{\bf 1}_{\{x_i=y_i,i\ne
1\}}+\dots\\
\dots+\D(x_N,y_N)\text{\bf 1}_{\{x_i=y_i,i\ne N\}}\bigr)-d_N\text{\bf
1}_{X=Y}\Bigr)=0,
\end{multline}
where
$$
W(x)=\frac1{\Gamma(z+N-x)\Gamma(z'+N-x)\Gamma(w+1+x)\Gamma(w'+1+x)}\,,\quad
x\in\Z.
$$

Let $p_0=1,p_1,p_2,\dots$, $\deg p_j=j-1$, be monic orthogonal polynomials on
$\Z$ corresponding to the weight function $W(x)$.  As
$$
W(x)=O(|x|^{-z-z'-w-w'-2N}), \qquad x\to\infty,
$$
the assumption $z+z'+w+w'>-1$ implies that $W(x)$ has at least $2N-3$ finite
moments, and polynomials $p_j$ with $j=0,1\dots,N-1$ are well defined.

Polynomials $\{p_j\}$ can be written explicitly in terms of the hypergeometric
function ${}_3F_2$ evaluated at 1. They were discovered by R.~Askey
\cite{Ask87}, and independently by P.~Lesky \cite{Les97}, \cite{Les98}; see
also the recent book \cite[\S5.3, Theorem 5.2, Case IIIc]{KLS10}. We call them
the {\it Askey-Lesky polynomials\/}.

The Askey-Lesky polynomials are eigenfunctions of the operator $\D$ on $\Z$,
see \cite[\S7]{BO05a}:
$$
\sum_{y\in\Z}\D(x,y)p_j(y)=\gamma_j p_j(x) \qquad \forall x\in\Z, \quad
j=0,1,2,\dots,
$$
where
$$
\gamma_j=j((j-1)-(u_N+u'_N+v_N+v'_N)).
$$
Multiplying both sides by $W(x)$ and using the fact that $W(x)\D(x,y)$ is
symmetric with respect to transposition $x\leftrightarrow y$ we obtain
\begin{equation}\label{eq5.2}
\sum_{x\in\Z} p_j(x)W(x)\D(x,y)=\gamma_j p_j(y)\qquad \forall y\in\Z, \quad
j=0,1,2,\dots\,.
\end{equation}
Let us rewrite the Vandermonde determinant in the left-hand side of
\eqref{eq5.1} as
$$
V_N(x)=\pm\det\bigl[ p_{i-1}(x_j)\bigr]_{i,j=1}^N.
$$

Applying operators $\D_1,\dots,\D_N$ to individual columns in this determinant
multiplied by $W(x_1)\cdots W(x_N)$ according to \eqref{eq5.2}, and recalling
the definition of $d_N$, we obtain \eqref{eq5.1}.

Let us now prove uniqueness. As explained in Subsection \ref{Invariant
measures}, it suffices to show that the $N$th level $zw$-measure is the unique
invariant probability measure for $(P_N(t))_{t\ge 0}$ for any $N\ge 1$. But
uniqueness of invariant measures holds in general for irreducible Markov chains
on countable sets, see e.g. Theorem 1.6 in \cite{And91}.

\end{proof}

\section{Stochastic dynamics on paths. General formalism}\label{Stochastic
dynamics}

\subsection{Overview}\label{Overview}

Let us return to the general setting of Section \ref{Abstract construction} and
assume that all $E_N$'s are discrete. For $N=1,2,\dots$ set
\begin{equation}\label{eq6.1}
\A^{(N)}=\Bigl\{(x_1,\dots,x_n)\in\A_1\times\cdots\times\A_N\mid
\prod_{k=1}^{N-1}\Lambda_{k}^{k+1}(x_{k+1},x_{k})\ne 0\Bigr\}.
\end{equation}
There are natural projections $\Pi^{N+1}_N:\A^{(N+1)}\to\A^{(N)}$ consisting in
forgetting the last coordinate; let $\A^{(\infty)}=\varprojlim \A^{(N)}$, where
the projective limit is taken with respect to these projections. Obviously,
$\A^{(\infty)}$ is a closed subset of the infinite product space
$\prod_{N=1}^\infty E_N$. Thus, elements of $E^{(\infty)}$ are some infinite
sequences. Let $\Pi^\infty_N:\A^{(\infty)}\to\A^{(N)}$ be the map that extracts
the first $N$ members of such a sequence.

\begin{definition}\label{6.1}
We say that a probability measure $\mu^{(N)}$ on $\A^{(N)}$ is {\it central\/}
if there exists a probability measure $\mu_N$ on $\A_N$ such that
\begin{equation}\label{eq6.2}
\mu^{(N)}(x_1,\dots,x_N)=\mu_N(x_N)\Lambda^{N}_{N-1}(x_N,x_{N-1})
\cdots\Lambda^2_1(x_2,x_1)
\end{equation}
for any $(x_1,\dots,x_N)\in\A^{(N)}$. Relation \eqref{eq6.2} establishes a
bijection between probability measures on $\A_N$ and central probability
measures on $\A^{(N)}$.

We say that $\mu^{(\infty)}\in \mathcal M_p(E^{(\infty)})$ is central if all
its pushforwards under projections $\Pi^\infty_N$ are central. Relation
\eqref{eq6.2} also establishes a bijection between central measures on
$E^{(\infty)}$ and elements of $\varprojlim\mathcal M_p(\A_N)$ of Subsection
\ref{Boundary}.

Finally, we say that a Markov semigroup $(P^{(N)}(t))_{t\ge 0}$ on $\A^{(N)}$
is {\it central\/} if the associate linear operators in $\mathcal M(\A^{(N)})$
map central measures to central measures.
\end{definition}

Clearly, a central Markov semigroup $(P^{(N)}(t))_{t\ge 0}$ defines a Markov
semigroup on $\A_N$ --- in order to obtain $\mu_N P_N(t)$ for $\mu_N\in\mathcal
M_p(\A_N)$ one needs to define $\mu^{(N)}$ via \eqref{eq6.2}, evaluate
$\mu^{(N)}P^{(N)}(t)$, and read off a measure on $\A_N$ using Definition
\ref{6.1}.

\begin{proposition}\label{6.2}
Let $(P^{(N)}(t))_{t\ge 0}$, $N\ge 1$, be a sequence of central Markov
semigroups on $\A^{(N)}$'s that are compatible with the system of projections:
$$
P^{(N+1)}(t)\circ\Pi^{N+1}_N=\Pi^{N+1}_N\circ P^{(N)}(t),\qquad t\ge 0,\quad
N\ge 1.
$$
Then the corresponding Markov semigroups $(P_N(t))_{t\ge 0}$ on $\A_N$, $N\ge
1$, are compatible with projections $\Lambda^{N+1}_N$ as in \eqref{eq1.3}.
\end{proposition}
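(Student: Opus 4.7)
The plan is to prove the desired compatibility $P_{N+1}(t)\circ\Lambda^{N+1}_N = \Lambda^{N+1}_N \circ P_N(t)$ by lifting both sides to the path spaces $E^{(N+1)}$ and $E^{(N)}$, where the hypothesis gives us the analogous compatibility with the forgetting projection $\Pi^{N+1}_N$, and then pushing the identity back down via the bijection of Definition \ref{6.1}. The first step is to record how the link $\Lambda^{N+1}_N$ on the lower level matches the projection $\Pi^{N+1}_N$ on the path level.

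The key observation is this: if $\mu_{N+1}\in\mathcal M_p(E_{N+1})$ and $\mu^{(N+1)}$ is the corresponding central measure on $E^{(N+1)}$ given by \eqref{eq6.2}, then the pushforward of $\mu^{(N+1)}$ under $\Pi^{N+1}_N$ is again central, and it corresponds (in the sense of \eqref{eq6.2}) to the measure $\mu_{N+1}\Lambda^{N+1}_N$ on $E_N$. Indeed, summing over the last coordinate $x_{N+1}$ in
$$
\mu^{(N+1)}(x_1,\dots,x_{N+1}) = \mu_{N+1}(x_{N+1})\,\Lambda^{N+1}_N(x_{N+1},x_N)\,\Lambda^N_{N-1}(x_N,x_{N-1})\cdots \Lambda^2_1(x_2,x_1)
$$
collapses the first two factors to $(\mu_{N+1}\Lambda^{N+1}_N)(x_N)$, leaving exactly the central form \eqref{eq6.2} on $E^{(N)}$ with underlying measure $\mu_{N+1}\Lambda^{N+1}_N$.

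Given this, the proof is a diagram chase. Fix $\mu_{N+1}\in\mathcal M_p(E_{N+1})$ and lift it to $\mu^{(N+1)}$. Going one way: since $P^{(N+1)}(t)$ is central, $\mu^{(N+1)} P^{(N+1)}(t)$ is the central measure on $E^{(N+1)}$ whose underlying measure on $E_{N+1}$ is $\mu_{N+1}P_{N+1}(t)$ (this is the very definition of the induced semigroup $P_{N+1}(t)$). Projecting via $\Pi^{N+1}_N$ and using the key observation, one obtains the central measure on $E^{(N)}$ corresponding to $(\mu_{N+1}P_{N+1}(t))\Lambda^{N+1}_N$. Going the other way: project $\mu^{(N+1)}$ to $E^{(N)}$ first (yielding the central measure corresponding to $\mu_{N+1}\Lambda^{N+1}_N$), then apply $P^{(N)}(t)$, which preserves centrality and by definition produces the central measure corresponding to $(\mu_{N+1}\Lambda^{N+1}_N)P_N(t)$. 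The assumed operator identity $P^{(N+1)}(t)\circ\Pi^{N+1}_N = \Pi^{N+1}_N\circ P^{(N)}(t)$ equates these two central measures on $E^{(N)}$, and the bijection of Definition \ref{6.1} then forces the equality $(\mu_{N+1}P_{N+1}(t))\Lambda^{N+1}_N = (\mu_{N+1}\Lambda^{N+1}_N)P_N(t)$ of measures on $E_N$. Since $\mu_{N+1}$ is arbitrary, \eqref{eq1.3} follows.

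There is no substantial analytic or combinatorial obstacle here; the argument is purely a bookkeeping exercise in the definitions. The only place that requires care is tracking the direction of kernel compositions and keeping the bijection of Definition \ref{6.1} oriented correctly both ways (lifting a measure $\mu_N$ on $E_N$ to the central measure $\mu^{(N)}$ on $E^{(N)}$ by inserting link factors, and recovering it by summation). Centrality of each $P^{(N)}(t)$ is used exactly once, to guarantee that the induced $P_N(t)$ on $E_N$ is well-defined.
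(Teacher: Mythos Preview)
Your proof is correct and follows the same approach as the paper, which simply states the key observation (that pushing a central measure $\mu^{(N+1)}$ forward via $\Pi^{N+1}_N$ yields the central measure corresponding to $\mu_{N+1}\Lambda^{N+1}_N$) and leaves the diagram chase implicit. You have spelled out that chase carefully and accurately.
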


\begin{proof} Follows from the fact that if $\mu^{(N+1)}$ and $\mu_{N+1}$ are
related as in Definition \ref{6.1} then $\mu^{(N+1)}\Pi^{N+1}_N$ and
$\mu_{N+1}\Lambda^{N+1}_N$ are also related in the same way.
\end{proof}

The goal of this section and the next one is to construct central Markov
semigroups $(P^{(N)}(t))_{t\ge 0}$ that would yield, as in Proposition
\ref{6.2}, semigroups $(P_N(t))_{t\ge 0}$ on $E_N=\gt_N$ that we dealt with in
the previous sections. One reason for such a construction is the fact that for the
Gelfand-Tsetlin graph, the isomorphism between central measures on
$\gt^{(\infty)}$ and probability measures on the boundary $\Omega$, cf.
Definition \ref{1.2}, is somewhat explicit, see Section \ref{Stochastic
dynamics2} below. Thus, $(P^{(N)}(t))_{t\ge 0}$ can be thought of as providing a
more ``hands-on'' description of the corresponding semigroup $(P(t))_{t\ge 0}$
on $\Omega$.

\subsection{Construction of bivariate Markov chains}\label{Construction}

Let $\A$ and $\A^*$ be countable sets, and let $Q$ and $Q^*$ be matrices of
transition rates on these sets. Let
$\Lambda=[\Lambda(x^*,x)]_{x^*\in\A^*,x\in\A}$ be an additional stochastic
matrix which we view as a stochastic link between $\A^*$ and $\A$.

We will assume that for each of the three matrices $Q$, $Q^*$, and $\Lambda$,
each row contains only finitely many nonzero entries. In addition, we assume
the relation
\begin{equation}\label{eq6.3}
\sum_{x\in\A}\Lambda(x^*,x)Q(x,y)=\sum_{y^*\in\A^*}
Q^*(x^*,y^*)\Lambda(y^*,y),\qquad x^*\in \A^*,\ y\in\A,
\end{equation}
or $\Lambda Q=Q^*\Lambda$ in matrix notation.

Observe that in case $\Lambda(x^*,y)=0$, the diagonal entries $Q(x,x)$ and
$Q(x^*,x^*)$ give no contribution to \eqref{eq6.3}, and the commutativity
relation can be rewritten as
\begin{equation}\label{eq6.4}
\sum_{x\in\A,x\ne y}\Lambda(x^*,x)Q(x,y)=\sum_{y^*\in\A^*,y^*\ne x^*}
Q^*(x^*,y^*)\Lambda(y^*,y),\qquad x^*\in \A^*,\ y\in\A.
\end{equation}
We will denote the above expression by $\Delta(x^*,y)$; it is only defined if
$\Lambda(x^*,y)=0$.

In what follows we also use the notation
$$
q_x=-Q(x,x),\quad x\in \A;\qquad q^*_{x^*}=-Q^*(x^*,x^*),\quad x^*\in \A^*.
$$

Consider the bivariate state space
$$
\A^{(2)}=\{(x^*,x)\in\A^*\times\A\mid \Lambda(x^*,x)\ne 0\}.
$$

We want to construct a Markov chain on $\A^{(2)}$ that would satisfy two
conditions:

\noindent $\bullet$\quad The projection of this Markov chain to $\A$ gives the
Markov chain defined by $Q$;

\noindent $\bullet$\quad It preserves the class of measures on $\A^{(2)}$
satisfying $\Prob(x\vert x^*)=\Lambda(x^*,x)$;

\noindent $\bullet$\quad In this class of measures, the projection of this
Markov chain to $\A^*$ gives the Markov chain defined by $Q^*$.

To this end, define a matrix $Q^{(2)}$ of transition rates on $\A^{(2)}$ with
off-diagonal entries given by
$$
Q^{(2)}\bigl((x^*,x),(y^*,y)\bigr)=\begin{cases} Q(x,y),&x^*=y^*,\\
Q^*(x^*,y^*)\,\dfrac{\Lambda(y^*,x)}{\Lambda(x^*,x)}\,,& x=y,\\
Q(x,y)\,\dfrac{Q^*(x^*,y^*)\Lambda(y^*,y)}{\Delta(x^*,y)}\,,&\Lambda(x^*,y)=0,
\Delta(x^*,y)\ne 0,
\\0,&\text{otherwise}.
\end{cases}
$$

Note that $\Lambda(x^*,y)=0$ implies $x^*\ne y^*$ and $x\ne y$ (provided that
$(x^*,x),(y^*,y)$ are in $\A^{(2)}$) so all the cases in the above definition
are mutually exclusive.

The diagonal entries $Q^{(2)}\bigl((x^*,x),(x^*,x)\bigr)$ with
$(x^*,x)\in\A^{(2)}$ are defined by
$$
-Q^{(2)}\bigl((x^*,x),(x^*,x)\bigr)=q^{(2)}_{(x^*,x)}:=\sum_{(y^*,y)\ne
(x^*,x)} Q^{(2)}\bigl((x^*,x),(y^*,y)\bigr).
$$
Clearly, any row of $Q^{(2)}$ also has only finitely many nonzero entries. One
immediately verifies that for any $(x^*,x)\in\A^{(2)}$ and $y\in\A$ with $x\ne
y$,
\begin{equation}\label{eq6.5}
\sum_{y^*:(y^*,y)\in\A^{(2)}}Q^{(2)}\bigl((x^*,x),(y^*,y)\bigr)=Q(x,y).
\end{equation}
Indeed, one needs to consider two cases $\Lambda(x^*,y)=0$ and $\ne 0$, and in
both cases the statement follows from the definitions. As the row sums of
$Q^{(2)}$ and $Q$ are all zero, we obtain \eqref{eq6.5} for $x=y$ as well.

For any $x\in \A$, let us also introduce a matrix of transition rates $Q_x$ on
the fiber $\A_x=\{x^*\in \A^*\mid \Lambda(x^*,x)\ne 0\}$ via
$$
Q_x(x^*,y^*)=Q^{(2)}((x^*,x),(y^*,x))=Q^*(x^*,y^*)\,\dfrac{\Lambda(y^*,x)}{\Lambda(x^*,x)},\qquad
x^*\ne y^*,
$$
and
$$
Q_x(x^*,x^*)=-\sum_{y^*\in \A_x,\,y^*\ne x^*} Q_x(x^*,y^*).
$$

The following statement is similar to Lemma 2.1 of \cite{BF08+} proved in the
discrete time setting. As we will see, the proof of the continuous time
statement is significantly more difficult.

\begin{proposition}\label{6.3}
Assume that the matrices of transition rates $Q$, $Q^*$, and $Q_x$ for any
$x\in\A$ are regular. Then $Q^{(2)}$ is also regular, and denoting by $P(t)$,
$P^*(t)$, and $P^{(2)}(t)$ the transition matrices corresponding to $Q,Q^*$,
and $Q^{(2)}$, we have
\begin{align}
\sum_{y^*:(y^*,y)\in\A^{(2)}} P^{(2)}\bigl(t;(x^*,x),(y^*,y)\bigr)=&P(t;x,y),
\label{eq6.6}\\
\sum_{x:(x^*,x)\in\A^{(2)}}\Lambda(x^*,x)P^{(2)}
\bigl(t;(x^*,x),(y^*,y)\bigr)=&P^*(t;x^*,y^*)\Lambda(y^*,y),\label{eq6.7}
\end{align}
where in the first relation $(x^*,x)\in\A^{(2)}$, $y\in\A$ are arbitrary, while
in the second relation $x^*\in\A^*,$ $(y^*,y)\in\A^{(2)}$ are arbitrary.
\end{proposition}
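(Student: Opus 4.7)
The plan is to split the statement into two parts: regularity of $Q^{(2)}$ together with the marginal identity \eqref{eq6.6}, and the weighted marginal identity \eqref{eq6.7}. The first part is a direct application of the lumpability result (Proposition \ref{3.31}); the second rests on an infinitesimal commutation identity combined with uniqueness of the forward Kolmogorov equation for $Q^{(2)}$.

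For the first part, I would partition $\A^{(2)}=\bigsqcup_{x\in\A}(\A_x\times\{x\})$ by the second coordinate. The within-fiber rate matrix is by construction $Q_x$, which is regular by assumption. The calculation already carried out in the paper gives \eqref{eq6.5} for $x\ne y$, and its $x=y$ instance follows by noting that $Q$ and $Q^{(2)}$ both have zero row sums. Proposition \ref{3.31}, applied with lumped matrix $Q_I=Q$, then yields simultaneously the regularity of $Q^{(2)}$ and \eqref{eq6.6}.

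For \eqref{eq6.7}, the key step is the infinitesimal identity
$$
\sum_{x\,:\,(x^*,x)\in\A^{(2)}} \Lambda(x^*,x)\, Q^{(2)}\bigl((x^*,x),(y^*,y)\bigr) = Q^*(x^*,y^*)\,\Lambda(y^*,y),
$$
for $x^*\in\A^*$ and $(y^*,y)\in\A^{(2)}$, verified by case analysis on the definition of $Q^{(2)}$. When $y^*\ne x^*$ and $\Lambda(x^*,y)\ne0$, only the $x=y$ summand contributes and yields the right-hand side directly. When $y^*\ne x^*$ and $\Lambda(x^*,y)=0$, summation over $x\ne y$ of the third-case rates produces $\Delta(x^*,y)$ in the numerator, which cancels the denominator; the intertwining $\Lambda Q = Q^*\Lambda$ (i.e.\ \eqref{eq6.4}) then delivers the required value, with the degenerate subcase $\Delta(x^*,y)=0$ forcing the right-hand side to vanish by nonnegativity. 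When $y^*=x^*$, combining the first-case contributions with the diagonal entry (computed from the row-sum identity together with the same intertwining) yields the desired $Q^*(x^*,x^*)\Lambda(x^*,y)$.

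With the infinitesimal identity established, fix $x^*\in\A^*$ and consider the probability measure $\mu(z^*,z):=\text{\bf 1}_{z^*=x^*}\Lambda(x^*,z)$ on $\A^{(2)}$. The left-hand side of \eqref{eq6.7} equals $(\mu P^{(2)}(t))(y^*,y)$, whereas $G(t;y^*,y):=P^*(t;x^*,y^*)\Lambda(y^*,y)$ is another probability measure on $\A^{(2)}$ with $G(0)=\mu$; a one-line computation using Kolmogorov's forward equation for $P^*$ together with the infinitesimal identity shows that $G(t)$ also satisfies $d\nu/dt=\nu Q^{(2)}$. Regularity of $Q^{(2)}$ together with Corollary \ref{3.2} implies uniqueness of stochastic solutions to this Cauchy problem, so $\mu P^{(2)}(t)=G(t)$, which is \eqref{eq6.7}. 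The main obstacle is the ``otherwise'' subcase of the infinitesimal identity: the quotient by $\Delta(x^*,y)$ must be telescoped via the intertwining, and the degenerate situation $\Delta(x^*,y)=0$ has to be handled separately; the remaining cases and the final lifting to the semigroup level are routine.
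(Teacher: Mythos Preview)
Your treatment of regularity and \eqref{eq6.6} via Proposition~\ref{3.31} is exactly the paper's.

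For \eqref{eq6.7} you take a genuinely different route. The paper differentiates the left-hand side using Kolmogorov's \emph{backward} equation for $P^{(2)}$ and, after a case analysis that is algebraically equivalent to your infinitesimal identity, obtains the backward equation $\frac{d}{dt}f_t=\sum_{z^*}Q^*(x^*,z^*)f_t(z^*,\cdot)$ in the variable $x^*$; it then shows that $P^*(t;x^*,y^*)\Lambda(y^*,y)$ is the \emph{minimal} nonnegative solution of this equation and concludes by comparing two probability measures. You instead isolate the infinitesimal identity first, use Kolmogorov's \emph{forward} equation for $P^*$ to show that $G(t)$ satisfies the forward equation for $Q^{(2)}$, and finish by uniqueness. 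Both arguments rest on the same algebraic core; yours is tidier in that it recycles the regularity of $Q^{(2)}$ already secured in part one, whereas the paper's version appeals to the regularity of $Q^*$ through the minimal-solution construction.

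One point to tighten: Corollary~\ref{3.2} is stated for substochastic \emph{matrix} solutions with initial condition $P(0)=\operatorname{Id}$, not for measure-valued trajectories with a prescribed initial distribution $\mu$. The adaptation is routine---iterate the integral form of the forward equation to obtain $\nu(t)\ge\mu\,\overline{P^{(2)}}(t)$ and then use that both sides are probability measures---but this is precisely the minimality step the paper carries out (for $Q^*$ rather than $Q^{(2)}$), so you have not really bypassed it. It helps that your initial measure $\mu=\text{\bf 1}_{z^*=x^*}\Lambda(x^*,\cdot)$ has finite support (rows of $\Lambda$ are finite), which makes differentiating $\mu P^{(2)}(t)$ under the sum, and checking absolute convergence in $G(t)Q^{(2)}$, straightforward.
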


\begin{proof} The regularity of $Q^{(2)}$ and collapsibility relation
\eqref{eq6.6}
follow from Proposition \ref{3.31} with \eqref{eq3.31} specializing to
\eqref{eq6.5}.

Proving \eqref{eq6.7} is more difficult, and we will follow the following path.
First, we will show that both sides of \eqref{eq6.7} satisfy the same
differential equation (essentially the Kolmogorov backward equation for
$P^*(t;x^*,y^*)$) with a certain initial condition. Then we will see that the
right-hand side of \eqref{eq6.7} represents the minimal of all nonnegative
solutions of this equation. Since for a fixed $x^*$, both sides of
\eqref{eq6.7} represent probability measures on
 $\A^{(2)}$, the equality will immediately follow.

For the first step, let us show that the left-hand side $f_t(x^*,y^*,y)$ of
\eqref{eq6.7} satisfies
\begin{equation}\label{eq6.8}
\frac{d}{dt}f_t(x^*,y^*,y) = \sum_{z^*\in \A^*} Q^*(x^*,z^*) f_t(z^*,y^*,y)
\end{equation}
with the initial condition
\begin{equation}\label{eq6.9}
\lim_{t\to+0}f_t(x^*,y^*,y)=\mathbf{1}_{x^*=y^*}\Lambda(y^*,y).
\end{equation}
The initial condition satisfied by $P^{(2)}(t)$ implies \eqref{eq6.9}, so let
us prove \eqref{eq6.8}.

Using the Kolmogorov backward equation for $P^{(2)}(t)$, we obtain
\begin{multline}\label{eq6.10}
\frac{d}{dt} f_t(x^*,y^*,y)=\sum_{x:(x^*,x)\in\A^{(2)}}\Lambda(x^*,x)
\Biggl(-q^{(2)}_{(x^*,x)}P^{(2)}\bigl(t;(x^*,x),(y^*,y)\bigr)\\+
\sum_{(z^*,z)\ne
(x^*,x)}Q^{(2)}((x^*,x),(z^*,z)\bigr)P^{(2)}\bigl(t;(z^*,z),(y^*,y)\bigr)
\Biggr).
\end{multline}

For the first term in the right-hand side, we use
$$
q^{(2)}_{(x^*,x)}=q_x+\sum_{w^*:w^*\ne
x^*}Q^*(x^*,w^*)\,\frac{\Lambda(w^*,x)}{\Lambda(x^*,x)},
$$
which follows directly from the definition of $Q^{(2)}$. Thus, we can rewrite
the first term in the right-hand side of \eqref{eq6.10} as
\begin{multline}\label{eq6.11}
-\sum_{x\in\A} q_x\Lambda(x^*,x)P^{(2)}\bigl(t;(x^*,x),(y^*,y)\bigr)\\-
\sum_{x:\,(x^*,x)\in\A^{(2)}}\sum_{w^*:\,w^*\ne
x^*}\Lambda(w^*,x)Q^*(x^*,w^*)P^{(2)}\bigl(t;(x^*,x),(y^*,y)\bigr).
\end{multline}

For the second term of the right-hand side of \eqref{eq6.10}, according to the
definition of $Q^{(2)}$, let us split the sum over $(z^*,z)$ into three
disjoint parts: (1) $x^*=z^*$, $x\ne z$; (2) $x^*\ne z^*$, $x=z$; (3)
$\Lambda(x^*,z)=0$ (hence, $x^*\ne z^*$, $x\ne z$).

Part (1) gives
$$
(1)=\sum_{x:(x^*,x)\in\A^{(2)}} \Lambda(x^*,x) \sum_{z:\, z\ne x,\,
(x^*,z)\in\A^{(2)}} Q(x,z)P^{(2)}\bigl(t;(x^*,z),(y^*,y)\bigr).
$$
Interchanging the summations over $x$ and $z$, we can employ the commutativity
relation \eqref{eq6.3}. This gives
\begin{multline}\label{eq6.12}
(1)=\sum_{z:\,(x^*,z)\in\A^{(2)}}\sum_{v^*\in\A^*}\Lambda(v^*,z)Q^*(x^*,v^*)
P^{(2)}\bigl(t;(x^*,z),(y^*,y)\bigr)\\+
\sum_{x\in\A}q_x\Lambda(x^*,x)P^{(2)}\bigl(t;(x^*,x),(y^*,y)\bigr).
\end{multline}

Observe that the last term cancels out with the first term in \eqref{eq6.11},
while the sum of the first term of \eqref{eq6.12} and the second term of
\eqref{eq6.11}, with identification $z=x$, $v^*=w^*$ of the summation
variables, yields (only terms with $v^*=x^*$ survive)
\begin{equation}\label{eq6.13}
-q^*_{x^*}\sum_{x\in\A}\Lambda(x^*,x)P^{(2)}\bigl(t;(x^*,x),(y^*,y)\bigr).
\end{equation}

Further, part (2) of the second term of \eqref{eq6.10} reads
\begin{equation}\label{eq6.14}
(2)=\sum_{x:(x^*,x)\in \A^{(2)}}\sum_{z^*:z^*\ne x^*}
\Lambda(z^*,x)Q^*(x^*,z^*) P^{(2)}\bigl(t;(z^*,x),(y^*,y)\bigr).
\end{equation}

Finally, part (3) gives
\begin{multline}\label{eq6.15}
(3)=\sum_{x:(x^*,x)\in
\A^{(2)}}\sum_{(z^*,z)\in\A^{(2)}:\Lambda(x^*,z)=0}\Lambda(x^*,x)Q(x,z)
\,\frac{Q^*(x^*,z^*)\Lambda(z^*,z)}{\Delta(x^*,z)}\\
\qquad\qquad\qquad\qquad\qquad\qquad\qquad\qquad\qquad\qquad\times
P^{(2)}\bigl(t;(z^*,z),(y^*,y)\bigr)
\\=\sum_{z:(x^*,z)\notin\A^{(2)}}\sum_{z^*:z^*\ne x^*}
\Lambda(z^*,z)Q^*(x^*,z^*) P^{(2)}\bigl(t;(z^*,z),(y^*,y)\bigr),
\end{multline}
where we used the definition of $\Delta$, see \eqref{eq6.4}, to perform the
summation over $x\ne z$. One readily sees that adding \eqref{eq6.13},
\eqref{eq6.14}, \eqref{eq6.15} yields the right-hand side of \eqref{eq6.8}.

Assume now that we have a nonnegative solution $f_t(x^*,y^*,y)$ of
\eqref{eq6.8} satisfying the initial condition \eqref{eq6.9}. Multiplying both
sides of \eqref{eq6.8} by $\exp(q^*_{x^*}t)$ we obtain
$$
\bigl(\exp(q^*_{x^*}t)f_t(x^*,y^*,y) \bigr)'=\exp(q^*_{x^*}t)\sum_{z^*\ne x^*}
Q^*(x^*,z^*) f_t(z^*,y^*,y).
$$
Integrating both sides over $t$ and using \eqref{eq6.9} gives

\begin{multline}\label{eq6.16}
f_t(x^*,y^*,y)=\mathbf{1}_{x^*=y^*}\Lambda(y^*,y)\exp(-q^*_{x^*}t)\\+\int_0^t
\exp(-q^*_{x^*}s)\sum_{z^*\ne x^*} Q^*(x^*,z^*) f_{t-s}(z^*,y^*,y)ds.
\end{multline}

Set $F^{(0)}_t(x^*,y^*)=\mathbf{1}_{x^*=y^*}\exp(-q^*_{x^*}t)$, and for
$n=1,2,\dots$ define
$$
F^{(n)}_t(x^*,y^*)=F^{(0)}_t(x^*,y^*)+\int_0^t \exp(-q^*_{x^*}s)\sum_{z^*\ne
x^*} Q^*(x^*,z^*) F^{(n-1)}_{t-s}(z^*,y^*)ds.
$$

Clearly, \eqref{eq6.16} implies $f_t(x^*,y^*,y)\ge
F^{(0)}_t(x^*,y^*)\Lambda(y^*,y)$, and substituting such estimates into
\eqref{eq6.16} recursively we see that
$$
f_t(x^*,y^*,y)\ge F^{(n)}_t(x^*,y^*)\Lambda(y^*,y),\qquad n=0,1,2,\dots
$$

On the other hand, we know that
$$
\lim_{n\to\infty}F^{(n)}_t(x^*,y^*)=P^*(t;x^*,y^*),
$$
see Section \ref{Generalities on Markov}, \cite{Fel40}, \cite{And91}. Hence,
any nonnegative solution of \eqref{eq6.8}, \eqref{eq6.9} is bounded by
$P^*(t;x^*,y^*)\Lambda(y^*,y)$ from below, and the proof of Proposition
\ref{6.3} is complete.
\end{proof}

The following statement is the analog of Proposition 2.2 in \cite{BF08+}.

\begin{corollary}\label{6.4}
Let $\mu^*(x^*)$ be a probability measure on $\A^*$. For $t\ge0$, let
$(x^*(t),x(t))$ be an $\A^{(2)}$-valued random variable with
$$
\operatorname{Prob}\bigl\{(x^*(t),x(t))=(x^*,x)\bigr\}=\sum_{(y^*,y)\in
\A^{(2)}} \mu^*(y^*)\Lambda(y^*,y) P^{(2)}\bigl(t;(y^*,y),(x^*,x)\bigr).
$$
Then for any time moments $0\le t_0\le t_1\le\dots\le t_k\le t_{k+1}\le\dots
\le t_{k+l}$, the joint distribution of
$$
\bigl(x^*(t_0),x^*(t_1),\dots,x^*(t_k),x(t_k),x(t_{k+1}),\dots,x(t_{k+l})\bigr)
$$
coincides with the stochastic evolution of $\mu^*$ under transition matrices
$$
\bigl(P^*(t_0),P^*(t_1-t_0),\dots,P^*(t_k-t_{k-1}),\Lambda,P(t_{k+1}-t_k),\dots,
P(t_{k+l}-t_{k+l-1})\bigr)
$$
\end{corollary}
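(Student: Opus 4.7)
The plan is to derive the corollary directly from the two marginalization identities \eqref{eq6.6} and \eqref{eq6.7} of Proposition \ref{6.3}, combined with the Markov property of the bivariate chain $(x^*(\cdot),x(\cdot))$ with transition matrix $P^{(2)}(t)$. Writing $\nu_s:=\mu^* P^*(s)$, the proof reduces to two assertions: first, that for every $k\ge 0$,
$$
\Prob\{x^*(t_0)=x^*_0,\ldots,x^*(t_k)=x^*_k,\,x(t_k)=z\}=\nu_{t_0}(x^*_0)\prod_{i=1}^{k}P^*(t_i-t_{i-1};x^*_{i-1},x^*_i)\,\Lambda(x^*_k,z);
$$
and second, that conditional on $(x^*(t_k),x(t_k))=(x^*_k,z_k)$ the future marginal $(x(t_{k+1}),\ldots,x(t_{k+l}))$ is a Markov chain with transition matrix $P$ started at $z_k$, independent of $x^*_k$. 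The joint distribution claimed in the corollary follows by combining these two assertions via the bivariate Markov property at time $t_k$.

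The first assertion I would prove by induction on $k$. The base case $k=0$ is obtained by applying \eqref{eq6.7} to the initial distribution $\mu^*(y^*)\Lambda(y^*,x)$ of $(x^*(0),x(0))$ evolved for time $t_0$ and then summing over the initial value $y^*$; this immediately yields $\nu_{t_0}(x^*_0)\Lambda(x^*_0,z)$. For the inductive step, I would condition on $(x^*(t_{k-1}),x(t_{k-1}))$ by the bivariate Markov property, which rewrites the joint probability as
$$
\sum_{z'}\Prob\{x^*(t_0)=x^*_0,\ldots,x^*(t_{k-1})=x^*_{k-1},\,x(t_{k-1})=z'\}\cdot P^{(2)}(t_k-t_{k-1};(x^*_{k-1},z'),(x^*_k,z)).
$$
By the inductive hypothesis the first factor equals a $(k-1)$-fold product of $P^*$-transitions times $\Lambda(x^*_{k-1},z')$; then \eqref{eq6.7}, applied with $t=t_k-t_{k-1}$, contracts the sum over $z'$ to $P^*(t_k-t_{k-1};x^*_{k-1},x^*_k)\Lambda(x^*_k,z)$, closing the induction.

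For the second assertion, by the bivariate Markov property at time $t_k$ the conditional probability in question equals
$$
\sum_{x^*_{k+1},\ldots,x^*_{k+l}}\prod_{i=1}^{l}P^{(2)}(t_{k+i}-t_{k+i-1};(x^*_{k+i-1},z_{k+i-1}),(x^*_{k+i},z_{k+i})),
$$
and one sums out the auxiliary variables in reverse order $j=l,l-1,\ldots,1$. At each step identity \eqref{eq6.6} converts the outermost surviving $P^{(2)}$-factor into $P(t_{k+i}-t_{k+i-1};z_{k+i-1},z_{k+i})$, which is free of the just-eliminated index $x^*_{k+i-1}$ and hence decouples from the remaining product. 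Iterating produces $\prod_{i=1}^l P(t_{k+i}-t_{k+i-1};z_{k+i-1},z_{k+i})$, independent of $x^*_k$, as required. There is no genuine obstacle here; the whole argument is careful bookkeeping on top of Proposition \ref{6.3}. The only point that needs a moment's care is the direction of marginalization in the second assertion: only when the auxiliary $x^*$-variables are integrated out starting from the latest time does the dependence on $x^*_k$ evaporate cleanly.
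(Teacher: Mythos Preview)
Your proof is correct and follows essentially the same approach as the paper's: both arguments write out the joint distribution of the bivariate chain and then use \eqref{eq6.7} to sum out the $x$-coordinates at times $\le t_k$ and \eqref{eq6.6} to sum out the $x^*$-coordinates at times $>t_k$. The paper's proof compresses this into a single sentence, whereas you have spelled out the iteration explicitly (induction for the first part, reverse-order summation for the second); the content is the same.
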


\begin{proof} In the joint distribution
\begin{multline}
\mu^*(y^*)\Lambda(y^*,y)
P^*\bigl(t_0;(y^*,y),(x^*_0,x_0)\bigr)P^*\bigl({t_1-t_0};(x^*_0,x_0),(x^*_1,x_1)\bigr)
\cdots\\ \cdots
P^*\bigl({t_{k+l}-t_{k+l-1}};(x^*_{k+l-1},x_{k+l-1}),(x^*_{k+l},x_{k+l})\bigr)
\end{multline}
one uses \eqref{eq6.7} to sum over $y,x_0,\dots,x_{k-1}$ and \eqref{eq6.6} to
sum over $x_{k+1}^*,\dots,x_{k+l}^*$.
\end{proof}

\subsection{Construction of multivariate Markov chains}

Let $\A_1,\dots,\A_N$ be countable sets, $Q_1,\dots,Q_N$ be matrices of
transition rates on these sets, and $\Lambda^2_1,\dots,\Lambda^N_{N-1}$ be
stochastic links:
$$
\Lambda_{k-1}^k:\A_k\times\A_{k-1}\to [0,1],\qquad
\sum_{y\in\A_{k-1}}\Lambda_{k-1}^k(x,y)=1,\quad x\in \A_k,\qquad k=2,\dots,N.
$$
It is also convenient to introduce a formal symbol $\Lambda^{1}_0$ with
$\Lambda^1_0(\,\cdot\,,\,\cdot\,)\equiv 1$. It can be viewed as a stochastic
link between $\A_1$ and a singleton $\A_0$.

We assume that for each of the matrices $Q_j$, $\Lambda^j_{j-1}$, each row
contains only finitely many nonzero entries, and that the following
commutativity relations are satisfied:
$$
\sum_{u\in\A_{k-1}}\Lambda^k_{k-1}(x,u)Q_{k-1}(u,y)
=\sum_{v\in\A_k}Q_k(x,v)\Lambda^k_{k-1}(v,y), \qquad k=2,\dots,N,
$$
or $\Lambda^k_{k-1}Q_{k-1}=Q_k\Lambda^k_{k-1}$ in matrix notation. If
$\Lambda^k_{k-1}(x,y)=0$, the terms with $u=y$ and $v=x$ give no contribution
to the sums and thus can be excluded. In that case we define ($x\in\A_{k}$,
$y\in\A_{k-1}$)
\begin{equation}\label{eq6.17}
\Delta^k_{k-1}(x,y):= \sum_{u:u\ne
y}\Lambda^k_{k-1}(x,u)Q_{k-1}(u,y)=\sum_{v:v\ne x}Q_k(x,v)\Lambda^k_{k-1}(v,y),
\end{equation}
and also
$$
\hat Q_k(x,v,y)=\begin{cases}
\dfrac{Q_k(x,v)\Lambda^k_{k-1}(v,y)}{\Delta^k_{k-1}(x,y)}\,,& \text{if  }\
\Delta^k_{k-1}(x,y)\ne 0,\\ 0,& \text{if  }\  \Delta^k_{k-1}(x,y)=0.
\end{cases}
$$
In case $\Delta^{k}_{k-1}(x,y)\ne 0$, $Q_k(x,v,y)$ is a probability
distribution in $v\in\A_k$ that depends on $x$ and $y$.

In the application of this formalism that we consider in the next section,
there is always exactly one $v$ that contributes nontrivially to the right-hand
side of \eqref{eq6.17}, which means that the distribution $\hat Q_k(x,v,y)$ is
supported by one point.

We define the state space $\A^{(N)}$ for the multivariate Markov chain by
\eqref{eq6.1} and then define the off-diagonal entries of the matrix $Q^{(N)}$
of transition rates on $\A^{(N)}$ as (we use the notation
$X_N=(x_1,\dots,x_N)$, $Y_N=(y_1,\dots,y_N)$)
$$
Q^{(N)}(X_N,Y_N) =\begin{cases}
Q_k(x_k,y_k)\,\dfrac{\Lambda^k_{k-1}(y_k,x_{k-1})}{\Lambda^k_{k-1}(x_k,x_{k-1})}\,,\\
Q_k(x_k,y_k)\,\dfrac{\Lambda^k_{k-1}(y_k,x_{k-1})}{\Lambda^k_{k-1}(x_k,x_{k-1})}\,
\hat Q_{k+1}(x_{k+1},y_{k+1},y_k)\cdots \hat Q_l(x_l,y_l,y_{l-1}),
\end{cases}
$$
where for the first line we must have $x_j=y_j$ for all $j\ne k$ and some
$k=1,\dots,N$, while for the second line we must have $x_j=y_j$ iff $j<k$ or
$j>l$ for some $1\le k<l\le N$, and $\Lambda(x_j,y_{j-1})=0$ for $k+1\le j\le
l$. If neither of the two sets of conditions is satisfied, we set
$Q^{(N)}(X_N,Y_N)$ to 0.

The diagonal entries $Q^{(N)}\bigl(X_N,X_N\bigr)$ are defined by
$$
Q^{(N)}\bigl(X_N,X_N\bigr)=-\sum_{Y_N\ne X_N} Q^{(N)}\bigl(X_N,Y_N\bigr).
$$

The definition of $Q^{(N)}$ can be interpreted as follows: Each of the
coordinates $x_k$, $k=1,\dots,N$, is attempting to jump to $y_k\in\A_k$ with
certain rates. Only $y_k$'s with $Q(x_k,y_k)\ne 0$ are eligible. Three
situations are possible:
\medskip

(1) The change of $x_k$ to $y_k$ does not move $X_N$ out of the state space,
that is $\Lambda^{k+1}_k(x_{k+1},y_k)\Lambda^k_{k-1}(y_k,x_{k-1})\ne0$. Such
jumps have rates
$Q_k(x_k,y_k)\,\frac{\Lambda^k_{k-1}(y_k,x_{k-1})}{\Lambda^k_{k-1}(x_k,x_{k-1})}$.
Note that for $k=1$ the last factor is always 1.
\medskip

(2) The change of $x_k$ to $y_k$ is in conflict with $x_{k-1}$, that is
$\Lambda^{k}_{k-1}(y_k,x_{k-1})=0$. Such jumps are blocked.
\medskip

(3) The change of $x_k$ to $y_k$ is in conflict with $x_{k+1}$, that is
$\Lambda^{k+1}_k(x_{k+1},y_k)=0$. Then $x_{k+1}$ has to be changed too, say to
$y_{k+1}$. We must have $\Lambda^{k+1}_k(y_{k+1},y_k)\ne 0$; relation
\eqref{eq6.17} guarantees the existence of at least one such $y_{k+1}$. If the
double jump $(x_k,x_{k+1})\to(y_k,y_{k+1})$ keeps $X_N$ in the state space, it
is allowed, and its rate is
$Q_k(x_k,y_k)\,\frac{\Lambda^k_{k-1}(y_k,x_{k-1})}{\Lambda^k_{k-1}(x_k,x_{k-1})}\,
\hat Q_{k+1}(x_{k+1},y_{k+1},y_k)$. Otherwise, $x_{k+2}$ has to be changed as
well, and so on.

\medskip

 To say it differently, unless $\Lambda^{k}_{k-1}(y_k,x_{k-1})=0$, the move
$x_k\to y_k$ always happens with rate
$Q_k(x_k,y_k)\,\frac{\Lambda^k_{k-1}(y_k,x_{k-1})}{\Lambda^k_{k-1}(x_k,x_{k-1})}$,
and it may cause a sequence of displacements of $x_{k+1},x_{k+2},\dots$, where
each next $x_j$ uses the distribution $\hat Q_{j}(x_{j},\,\cdot\,,y_{j-1})$ to
choose its new position. Displacements end once $X_N$ is back in $\A^{(N)}$.
This description implies the following formula for the diagonal entries of
$Q^{(N)}$:
\begin{equation}\label{eq6.18}
Q^{(N)}(X_N,X_N)=-\sum_{k=1}^N \sum_{y_k\in\A_k:y_k\ne x_k}
Q_k(x_k,y_k)\,\frac{\Lambda^k_{k-1}(y_k,x_{k-1})}{\Lambda^k_{k-1}(x_k,x_{k-1})}\,.
\end{equation}

The definition of $Q^{(N)}$ is explained by the following statement.

\begin{proposition}\label{6.5}
Consider the matrix $\Lambda$ with rows marked by elements of $\A_N$, columns
marked by $\A^{(N-1)}$, and entries given by
\begin{equation}\label{eq6.19}
\Lambda(x_N,(x_1,\dots,x_{N-1}))=\Lambda^N_{N-1}(x_N,x_{N-1})\cdots
\Lambda^2_1(x_2,x_1).
\end{equation}
Then the commutativity relation $\Lambda Q^{(N-1)}=Q_N\Lambda$ holds.
\end{proposition}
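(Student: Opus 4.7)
I plan to proceed by induction on $N$. The base case $N = 2$ is immediate since $Q^{(1)} = Q_1$, $\Lambda = \Lambda^2_1$, and the claim reduces to the assumed commutativity $\Lambda^2_1 Q_1 = Q_2 \Lambda^2_1$. For the inductive step, the central observation is a factorization of the composite link: writing $X_{N-1} = (X_{N-2}, x_{N-1})$ with $X_{N-2} = (x_1, \ldots, x_{N-2})$ and denoting by $\Lambda'$ the analogous level-$(N-1)$ composite link, one has $\Lambda(x_N, X_{N-1}) = \Lambda^N_{N-1}(x_N, x_{N-1}) \cdot \Lambda'(x_{N-1}, X_{N-2})$; the inductive hypothesis supplies $\Lambda' Q^{(N-2)} = Q_{N-1} \Lambda'$.

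The first nontrivial step is to identify the multivariate matrix $Q^{(N-1)}$ with the bivariate construction of Subsection \ref{Construction} applied to the data $(\A^{(N-2)}, \A_{N-1}, Q^{(N-2)}, Q_{N-1}, \Lambda')$. This is a direct case-by-case comparison: the three types of nonzero off-diagonal multivariate transitions (block pushes contained in $[1, N-2]$; single jumps at $N-1$ without cascade; block pushes extending into $N-1$) correspond respectively to the three bivariate cases (only the $\A^{(N-2)}$-coordinate moves; only the $\A_{N-1}$-coordinate moves; both move via a forced jump when $\Lambda'(x_{N-1}, Y_{N-2}) = 0$). The key algebraic identity in the last case is
\[
\frac{\Lambda'(y_{N-1}, Y_{N-2})}{\Delta(x_{N-1}, Y_{N-2})} = \frac{\Lambda^{N-1}_{N-2}(y_{N-1}, y_{N-2})}{\Delta^{N-1}_{N-2}(x_{N-1}, y_{N-2})},
\]
which follows by cancelling the common lower composite link factors. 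With this identification in hand, I apply the rate-level collapsibility
\[
\sum_{X_{N-2}} \Lambda'(x_{N-1}, X_{N-2})\, Q^{(N-1)}\!\bigl((X_{N-2}, x_{N-1}), (Y_{N-2}, y_{N-1})\bigr) = Q_{N-1}(x_{N-1}, y_{N-1})\, \Lambda'(y_{N-1}, Y_{N-2}),
\]
which is a rate-level analogue of \eqref{eq6.7} and can be checked directly from the bivariate definition using the inductive hypothesis $\Lambda' Q^{(N-2)} = Q_{N-1} \Lambda'$.

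Substituting the factorization of $\Lambda$ and this collapsibility into $(\Lambda Q^{(N-1)})(x_N, Y_{N-1})$ collapses the inner sum to yield $\Lambda'(y_{N-1}, Y_{N-2}) \cdot (\Lambda^N_{N-1} Q_{N-1})(x_N, y_{N-1})$, which by the top-level commutativity $\Lambda^N_{N-1} Q_{N-1} = Q_N \Lambda^N_{N-1}$ equals $\sum_{y_N} Q_N(x_N, y_N) \Lambda^N_{N-1}(y_N, y_{N-1}) \Lambda'(y_{N-1}, Y_{N-2}) = (Q_N \Lambda)(x_N, Y_{N-1})$, completing the induction. The main obstacle is the case-by-case identification of $Q^{(N-1)}$ with the bivariate iterate; once done, the remainder is the short calculation indicated above.
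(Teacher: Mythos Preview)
Your inductive approach is correct and genuinely different from the paper's direct computation. The paper expands $(\Lambda Q^{(N-1)})(x_N,Y_{N-1})$ by sorting the off-diagonal contributions according to the block-push interval $[k,l]$, introduces auxiliary sums $A(k,l)$ and $B(k,l)$, proves the telescoping relation $A(k,l)+B(k,l)=B(k+1,l)$, and then sums in $l$ using the single-step commutativity $\Lambda^{l+1}_l Q_l=Q_{l+1}\Lambda^{l+1}_l$ at each level. No induction and no bivariate identification are used.

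Your route instead isolates a clean lemma --- the rate-level analogue of \eqref{eq6.7}, namely
\[
\sum_{x}\Lambda(x^*,x)\,Q^{(2)}\bigl((x^*,x),(y^*,y)\bigr)=Q^*(x^*,y^*)\,\Lambda(y^*,y),
\]
which indeed follows from a short case check together with the commutativity hypothesis \eqref{eq6.3} (the diagonal case reduces precisely to $\Lambda Q=Q^*\Lambda$). Combined with the identification of $Q^{(N-1)}$ as the bivariate construction over $(\A^{(N-2)},\A_{N-1},\Lambda')$ --- which the paper states and uses one level higher in the proof of Proposition~\ref{6.6}, but does not exploit here --- this reduces the proposition to the top-level commutativity $\Lambda^N_{N-1}Q_{N-1}=Q_N\Lambda^N_{N-1}$ and the inductive hypothesis. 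Your argument is more modular and makes the recursive bivariate structure do the work; the paper's argument is more hands-on and exposes the telescoping combinatorics of the block pushes. Both are complete; yours also yields the infinitesimal collapsibility identity above as a byproduct, which the paper only establishes at the semigroup level in Proposition~\ref{6.3}.
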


\begin{proof}
We have
\begin{equation}\label{eq6.20}
\Lambda Q^{(N-1)}(x_N,Y_{N-1})=\sum_{X_{N-1}\in\A^{(N-1)}}
\Lambda(x_N,X_{N-1})Q^{(N-1)}(X_{N-1},Y_{N-1}).
\end{equation}
By \eqref{eq6.18}, the contribution of $X_{N-1}=Y_{N-1}$ to the right-hand side
has the form
\begin{equation}\label{eq6.21}
-\Lambda(x_n,Y_{N-1})\sum_{k=1}^{N-1} \sum_{z_k\in\A_k:z_k\ne y_k}
Q_k(y_k,z_k)\,\frac{\Lambda^k_{k-1}(z_k,y_{k-1})}{\Lambda^k_{k-1}(z_k,y_{k-1})}\,.
\end{equation}
For $X_{N-1}\ne Y_{N-1}$, the contribution of matrix elements of
$Q^{(N-1)}(X_{N-1},Y_{N-1})$ that correspond to jumps
$(x_k,x_{k+1},\dots,x_l)\to (y_k,y_{k+1},\dots,y_l)$, $1\le k\le l\le N$, with
all other $x_j=y_j$, has the form
\begin{multline}\label{eq6.22}
\sum
\Lambda^N_{N-1}(x_N,y_{N-1})\Lambda^{N-1}_{N-2}(y_{N-1},y_{N-2})\cdots
\Lambda^{l+2}_{l+1}(y_{l+2},y_{l+1})
\\
\times\Lambda^{l+1}_{l}(y_{l+1},x_{l})\Lambda^{l}_{l-1}(x_{l},x_{l-1})\cdots\Lambda^{k+1}_k(x_{k+1},x_k)
\\
\times
\Lambda^{k}_{k-1}(x_{k},y_{k-1})\Lambda^{k-1}_{k-2}(y_{k-1},y_{k-2})\cdots\Lambda^2_1(y_2,y_1)
\\ \times
Q_k(x_k,y_k)\,\frac{\Lambda^k_{k-1}(y_k,y_{k-1})}{\Lambda^k_{k-1}(x_k,y_{k-1})}\,
\hat Q_{k+1}(x_{k+1},y_{k+1},y_k)\cdots \hat Q_l(x_l,y_l,y_{l-1}),
\end{multline}
where the summation is over $x_k,\dots,x_l$ satisfying $x_i\ne y_i$ for all
$k\le i\le l$ and
\begin{equation}\label{eq6.23}
\Lambda^k_{k-1}(x_k,y_{k-1})\ne 0,\qquad \Lambda^i_{i-1}(x_i,y_{i-1})=0,\quad
k<i\le l.
\end{equation}
Denote this expression by $A(k,l)$.

Observe that in \eqref{eq6.22}, the factors $\Lambda^k_{k-1}(x_k,y_{k-1})$
cancel out. Let us denote by $B(k,l)$ the sum of same expressions
\eqref{eq6.22} with canceled $\Lambda^k_{k-1}(x_k,y_{k-1})$, and with
conditions \eqref{eq6.23} replaced by
$$
\Lambda^k_{k-1}(x_k,y_{k-1})= 0,\qquad \Lambda^i_{i-1}(x_i,y_{i-1})=0,\quad
k<i\le l.
$$
Thus, the sum $A(k,l)+B(k,l)$ has no restrictions on $x_k$ other that $x_k\ne
y_k$.

Using the definitions of $\Delta^{k+1}_k$ and $\hat Q_{k+1}$ we see that
\begin{multline}
\sum_{x_k:x_k\ne y_k} \Lambda^{k+1}_k(x_{k+1},x_k)Q_k(x_k,y_k)\hat
Q_{k+1}(x_{k+1},y_{k+1},y_k)\\=Q_{k+1}(x_{k+1},y_{k+1})\Lambda^{k+1}_k(y_{k+1},y_k).
\end{multline}
Hence, $A(k,l)+B(k,l)=B(k+1,l)$. Noting that $B(1,l)=0$, we obtain, for any
$l=1,\dots,N-1$,
\begin{multline}
A(1,l)+A(2,l)+\dots+A(l,l)=A(l,l)+B(l,l)\\
=\frac{\Lambda(x_N,Y_{N-1})}{\Lambda^{l+1}_l(y_{l+1},y_l)}\sum_{x_l:x_l\ne y_l}
\Lambda^{l+1}_{l}(y_{l+1},x_l)Q_l(x_l,y_l)\\=\frac{\Lambda(x_N,Y_{N-1})}
{\Lambda^{l+1}_l(y_{l+1},y_l)} \sum_{x_l\in\A_l}
\Lambda^{l+1}_{l}(y_{l+1},x_l)Q_l(x_l,y_l)-\Lambda(x_N,Y_{N-1})Q_l(y_l,y_l)\\
={\Lambda(x_N,Y_{N-1})} \left(\sum_{z_{l+1}\in\A_{l+1}}
Q_{l+1}(y_{l+1},z_{l+1})\frac{\Lambda^{l+1}_l(z_{l+1},y_l)}{\Lambda^{l+1}_l(y_{l+1},y_l)}-Q_l(y_l,y_l)\right)\\
={\Lambda(x_N,Y_{N-1})} \Biggl(\sum_{z_{l+1}\ne y_{l+1}}
Q_{l+1}(y_{l+1},z_{l+1})\frac{\Lambda^{l+1}_l(z_{l+1},y_l)}{\Lambda^{l+1}_l(y_{l+1},y_l)}\\
+Q_{l+1}(y_{l+1},y_{l+1})-Q_l(y_l,y_l)\Biggr),
\end{multline}
where we used the commutativity relation
$\Lambda^{l+1}_lQ_l=Q_{l+1}\Lambda^{l+1}_l$ along the way. Hence, using
\eqref{eq6.20} we obtain
\begin{multline}
\Lambda Q^{(N-1)}(x_N,Y_{N-1})=\sum_{1\le k\le l\le
N-1}A(k,l)+\Lambda(x_N,Y_{N-1})Q^{(N-1)}(Y_{N-1},Y_{N-1}) \\= \sum_{z_N\ne
y_N}Q_N(x_N,z_N)\Lambda(z_N,Y_{N-1})+Q_N(x_N,x_N)\Lambda(x_N,Y_{N-1})=
Q_N\Lambda(x_N,Y_{N-1}).\\
\end{multline}
\end{proof}

For any $N\ge 2$ and $x_{N-1}\in \A_{N-1}$ let us define a matrix $Q_{x_{N-1}}$
of transition rates on the fiber
$$
\A_{x_{N-1}}=\{x_N\in \A_N\mid \Lambda^{N}_{N-1}(x_N,x_{N-1})\ne 0\}
$$
via
\begin{equation}\label{eq6.23a}
\gathered Q_{x_{N-1}}(x_N,y_N)=
Q_N(x_N,y_N)\,\frac{\Lambda^N_{N-1}(y_N,x_{N-1})}{\Lambda^N_{N-1}(x_N,x_{N-1})},
\qquad y_N\ne x_N,\\
Q_{x_{N-1}}(x_N,x_N)=-\sum_{y_N\in \A_{X_{N-1}},\,y_N\ne
x_N}Q_{x_{N-1}}(x_N,y_N).
\endgathered
\end{equation}

The next statement is analogous to Proposition 2.5 in \cite{BF08+}.

\begin{proposition}\label{6.6}
Assume that the matrices of transition rates $Q_1,\dots,Q_N$ and $Q_{x_1},\dots
Q_{x_{N-1}}$ for any $x_j\in \A_j$, $j=1,\dots,N-1$ are regular. Then
$Q^{(2)},\dots, Q^{(N)}$ are also regular. Denote by $\{{P_{j}}(t)\}_{1\le j\le
N}$ and $P^{(N)}(t)$ the transition matrices for $\{{Q_{j}}(t)\}_{1\le j\le N}$
and $Q^{(N)}(t)$.

Let $\mu_N$ be a probability measure on $\A_N$, and for $t\ge0$, let
$(x_1(t),\dots,x_N(t))$ be a $\A^{(N)}$-valued random variable with
\begin{multline}
\Prob\bigl\{(x_1(t),\dots,x_N(t))=(x_1,\dots,x_N)\bigr\}\\=\sum_{Y_N\in
\A^{(N)}} \mu_N(y_N)\Lambda(y_N,Y_{N-1}) P^{(N)}\bigl(t;Y_N,X_N\bigr).
\end{multline}
Then for any sequence of time moments
\begin{multline}0\le t^N_{0}\le t^N_{1}\le\dots\le t^N_{k_N}=t^{N-1}_{0}\le
t^{N-1}_{1}\le\dots\le t^{N-1}_{k_{N-1}}=t^{N-2}_{0}\le\dots\\ \ldots\le
t^2_{k_2}=t^1_{0}\le t^1_{1}\le \dots\le t^1_{k_1}
\end{multline}
the joint distribution of $\{x_m(t^m_{k})\}$ ordered as the time moments
coincides with the stochastic evolution of $\mu_N$ under transition matrices
\begin{multline}
{P_{N}}({t^N_{0}}),{P_{N}}({t^N_{1}-t^N_{0}}),\dots,{P_{N}}({t^N_{k_N}-t^N_{k_{N}-1}}),
\Lambda^N_{N-1},
\\{P_{N-1}}({t^{N-1}_{1}-t^{N-1}_{0}}),\dots,{P_{N-1}}({t^{N-1}_{k_{N-1}}-t^{N-1}_{k_{N-1}-1}}),
\Lambda^{N-1}_{N-2},\dots\\\dots,
{P_{1}}({t^1_{1}-t^1_{0}}),\dots,{P_{1}}({t^1_{k_1}-t^1_{k_{1}-1}}).
\end{multline}
\end{proposition}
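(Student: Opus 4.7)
The plan is induction on $N$, realizing $Q^{(N)}$ as the output of the bivariate construction of Subsection \ref{Construction} applied at the top level and then invoking Corollary \ref{6.4}.

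First I would verify that $Q^{(N)}$ coincides with the matrix $\widetilde Q^{(2)}$ produced by the recipe of Subsection \ref{Construction} with $\A^*=\A_N$, $\A=\A^{(N-1)}$, $Q^*=Q_N$, $Q=Q^{(N-1)}$, and the composite link $\Lambda$ defined by \eqref{eq6.19}; the required commutativity $\Lambda Q^{(N-1)}=Q_N\Lambda$ is exactly Proposition \ref{6.5}. Matching the bivariate off-diagonal formula against the multivariate one case by case, the situation $x_N=y_N$, $X_{N-1}\ne Y_{N-1}$ gives rate $Q^{(N-1)}(X_{N-1},Y_{N-1})$ and absorbs all cascades staying below level $N$; the situation $X_{N-1}=Y_{N-1}$, $x_N\ne y_N$ yields $Q_N(x_N,y_N)\Lambda^N_{N-1}(y_N,x_{N-1})/\Lambda^N_{N-1}(x_N,x_{N-1})$ after the lower $\Lambda^k_{k-1}$ factors cancel in the ratio; and the remaining case $\Lambda^N_{N-1}(x_N,y_{N-1})=0$ produces $Q^{(N-1)}(X_{N-1},Y_{N-1})\cdot \hat Q_N(x_N,y_N,y_{N-1})$, since the common tail $\Lambda^{N-1}_{N-2}(y_{N-1},y_{N-2})\cdots\Lambda^2_1(y_2,y_1)$ cancels between $\Lambda(y_N,Y_{N-1})$ and $\Delta(x_N,Y_{N-1})$, turning the latter into $\Delta^N_{N-1}(x_N,y_{N-1})$. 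Regularity of $Q^{(N)}$ then follows from Proposition \ref{6.3} upon checking that the fiber $\A_x$ at $x=X_{N-1}$ coincides with $\A_{x_{N-1}}$ and the induced fiber rate matrix is $Q_{x_{N-1}}$ from \eqref{eq6.23a}, again by cancellation of the lower factors of $\Lambda$. Using the inductive hypothesis for $Q^{(N-1)}$, all hypotheses of Proposition \ref{6.3} are in force, so $Q^{(N)}$ is regular and the collapsibility \eqref{eq6.6}--\eqref{eq6.7} holds at the top level.

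For the joint distribution claim I would induct on $N$; the base $N=1$ is the Markov property of $Q_1$. For the step, apply Corollary \ref{6.4} to the bivariate setup above starting from $\mu^*=\mu_N$, sampling $x^*=x_N$ at the times $t^N_0\le\dots\le t^N_{k_N}$ and $x=X_{N-1}$ at all subsequent times in the nested list. This expresses the joint law of $(x_N(t^N_0),\dots,x_N(t^N_{k_N}),X_{N-1}(t^{N-1}_0),\dots,X_{N-1}(t^1_{k_1}))$ as the sequential stochastic evolution of $\mu_N$ through $P_N,\dots,P_N,\Lambda,P^{(N-1)},\dots,P^{(N-1)}$. The intermediate measure $\mu_N P_N(t^N_{k_N})\Lambda$ on $\A^{(N-1)}$ is central in the sense of Definition \ref{6.1}, hence factors as $\mu_{N-1}(y_{N-1})\Lambda^{N-1}_{N-2}(y_{N-1},y_{N-2})\cdots\Lambda^2_1(y_2,y_1)$ with $\mu_{N-1}:=\mu_N P_N(t^N_{k_N})\Lambda^N_{N-1}$. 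For $s\ge t^{N-1}_0$, the process $X_{N-1}(s)$ is therefore the $(N-1)$-dimensional multivariate chain started from $\mu_{N-1}$; applying the inductive hypothesis (with a time-shift by $t^{N-1}_0$) expresses the joint law of $\{x_m(t^m_k):1\le m\le N-1\}$ as the prescribed sequential evolution through $P_{N-1},\dots,\Lambda^{N-1}_{N-2},\dots,P_1$. Prepending the top-level $P_N$-evolution together with the single handoff $\Lambda^N_{N-1}$ (which is precisely what sends $\mu_N P_N(t^N_{k_N})$ to $\mu_{N-1}$) yields exactly the sequence of transition matrices in the statement.

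The main obstacle I expect is the verification in the first paragraph: specifically, the cancellation showing that the bivariate normalizer $\Delta(x_N,Y_{N-1})$, built from the composite $\Lambda$, reduces in the rate ratio to the single-level $\Delta^N_{N-1}(x_N,y_{N-1})$ appearing in $\hat Q_N$, so that the bivariate cascade rate matches the multivariate one on the nose. The subtler point in the inductive step is the replacement of the composite $\Lambda$ produced by Corollary \ref{6.4} by the single step $\Lambda^N_{N-1}$ demanded by the statement; this hinges on the centrality of the intermediate measure on $\A^{(N-1)}$, which lets the inductive hypothesis absorb the remaining links $\Lambda^k_{k-1}$, $k\le N-1$, one level at a time.
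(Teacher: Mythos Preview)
Your proposal is correct and follows precisely the route the paper takes: identify $Q^{(N)}$ with the output of the bivariate construction applied to $Q=Q^{(N-1)}$, $Q^*=Q_N$, and the composite $\Lambda$ of \eqref{eq6.19} (the paper calls this ``a straightforward computation''), invoke Proposition~\ref{6.3} for regularity and Corollary~\ref{6.4} for the joint law at the top level, then finish by induction on $N$. The two points you flag as obstacles---the tail cancellation reducing $\Delta(x_N,Y_{N-1})$ to $\Delta^N_{N-1}(x_N,y_{N-1})$, and the centrality of the intermediate measure $\mu_N P_N(t^N_{k_N})\Lambda$ that lets the inductive hypothesis absorb the remaining links---are exactly the content hidden behind the paper's one-line proof, and your treatment of them is accurate.
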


\begin{proof} It is a straightforward computation to see that the construction
of the bivariate Markov chain from the previous section applied to
$Q=Q^{(N-1)}$, $Q^*=Q_N$, and $\Lambda$ given by \eqref{eq6.19} (the needed
commutativity is proved in Proposition \ref{6.5}), yields exactly $Q^{(N)}$. We
apply Corollary \ref{6.4}, and induction on $N$ concludes the proof. .
\end{proof}

\begin{corollary}\label{6.7}
In the assumptions of Proposition \ref{6.6}, $(P^{(N)}(t))_{t\ge 0}$ is central
in the sense of Definition \ref{6.1}, and the induced semigroup on $\A_N$ is
exactly $(P_N(t))_{t\ge 0}$. Furthermore, compatibility relations of
Proposition \ref{6.2} also hold.
\end{corollary}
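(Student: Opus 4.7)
The plan is to derive both statements from the bivariate formalism of Subsection \ref{Construction}. By the proof of Proposition \ref{6.6}, the matrix $Q^{(N)}$ coincides with the bivariate construction applied to $Q^*=Q_N$, $Q=Q^{(N-1)}$, and the cascade $\Lambda:\A_N\to\A^{(N-1)}$ defined in \eqref{eq6.19}. Consequently, identities \eqref{eq6.6} and \eqref{eq6.7} of Proposition \ref{6.3} hold with $P^{(2)}$ replaced by $P^{(N)}$ and with $P^*$, $P$, $\Lambda$ as just described. I will use these as the backbone.

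For centrality and identification of the induced semigroup, I would fix $\mu_N\in\mathcal M_p(\A_N)$ and form the central measure $\mu^{(N)}$ on $\A^{(N)}$ determined by $\mu_N$ via \eqref{eq6.2}; in matrix terms, $\mu^{(N)}(X_{N-1},x_N)=\mu_N(x_N)\Lambda(x_N,X_{N-1})$. Multiplying both sides of \eqref{eq6.7} by $\mu_N(x_N)$ and summing over $x_N\in\A_N$ yields
$$
(\mu^{(N)}P^{(N)}(t))(Y_{N-1},y_N) = (\mu_N P_N(t))(y_N)\,\Lambda(y_N,Y_{N-1}),
$$
so $\mu^{(N)}P^{(N)}(t)$ is the central measure with top marginal $\mu_N P_N(t)$. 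This simultaneously proves that $(P^{(N)}(t))_{t\ge 0}$ is central in the sense of Definition \ref{6.1} and that the induced semigroup on $\A_N$ is precisely $(P_N(t))_{t\ge 0}$.

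For the compatibility relations of Proposition \ref{6.2}, I would apply the same bivariate construction one level higher, obtaining $Q^{(N+1)}$ from $(Q_{N+1},Q^{(N)},\tilde\Lambda)$ with $\tilde\Lambda:\A_{N+1}\to\A^{(N)}$ the corresponding cascade. Identity \eqref{eq6.6} at this level reads
$$
\sum_{y_{N+1}}P^{(N+1)}(t;(X_N,x_{N+1}),(Y_N,y_{N+1})) = P^{(N)}(t;X_N,Y_N),
$$
which is the operator identity $P^{(N+1)}(t)\circ\Pi^{N+1}_N=\Pi^{N+1}_N\circ P^{(N)}(t)$, i.e. exactly the hypothesis of Proposition \ref{6.2}. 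Combined with the centrality established above, Proposition \ref{6.2} then yields the link compatibility $P_{N+1}(t)\Lambda^{N+1}_N=\Lambda^{N+1}_N P_N(t)$, which is \eqref{eq1.3}. (Alternatively, one may sum \eqref{eq6.7} at level $N+1$ over $y_{N+1}$, use \eqref{eq6.6} on the left to obtain $\tilde\Lambda P^{(N)}(t)=P_{N+1}(t)\tilde\Lambda$, and then project to the top coordinate using centrality together with $\tilde\Lambda\,\Pi_{\text{top}}=\Lambda^{N+1}_N$.)

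The substantive work has already been carried out in Propositions \ref{6.3} and \ref{6.6}; the present corollary is a routine bookkeeping consequence of \eqref{eq6.6}, \eqref{eq6.7}, and the recursive bivariate structure of the $Q^{(N)}$, so I do not anticipate any genuine obstacle beyond careful tracking of indices and variables.
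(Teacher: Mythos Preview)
Your proposal is correct and follows essentially the same route as the paper. The paper's own proof is terser: for centrality and the identification of the induced semigroup it invokes Proposition \ref{6.6} with $k_N=1$, $k_{N-1}=\dots=k_1=0$ (which is just a repackaging of \eqref{eq6.7} via Corollary \ref{6.4}), and for the projection compatibility it cites the bivariate identity with $Q=Q^{(N-1)}$, $Q^*=Q_N$, $Q^{(2)}=Q^{(N)}$; your direct appeal to \eqref{eq6.7} for the first part and to \eqref{eq6.6} (at level $N+1$) for the $\Pi^{N+1}_N$ relation is exactly the underlying mechanism, spelled out more explicitly.
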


\begin{proof} The first two statements follow from Proposition \ref{6.6} with
$$
k_N=1,\qquad k_{N-1}=k_{N-2}=\dots=k_1=0.
$$
The third statement is \eqref{eq6.7} with $Q=Q^{(N-1)}$, $Q^*=Q_N$,
$Q^{(2)}=Q^{(N)}$, and $\Lambda$ given by \eqref{eq6.19}.
\end{proof}

\section{Stochastic dynamics on paths. Gelfand-Tsetlin graph}\label{Stochastic
dynamics2}

\subsection{Central measures on paths and the boundary}\label{Central measures}

 Let us
return to our concrete setup, cf. Section \ref{Specialization}. We have
$E_N=\gt_N$, the space of signatures of length $N$, and $E^{(N)}$ of
\eqref{eq6.1} is the set of Gelfand-Tsetlin schemes of length $N$; we denote it
by $\gt^{(N)}$.

Due to \eqref{eq2.0}, the notion of centrality for $\mu^{(N)}\in\mathcal
M_p(\gt^{(N)})$ means the following, cf. \eqref{eq6.2}: For any
$\underline\la=(\la^{(1)}\prec\la^{(2)}\prec\dots\prec\la^{(N)})\in\gt^{(N)}$,
$\mu^{(N)}(\underline\la)$ depends only on $\la^{(N)}$. For branching graphs,
the notion of central measures was introduced in \cite{VK81}, see also
\cite{Ker03}.

In Subsection \ref{Overview} we explained that central measures on the space
$E^{(\infty)}=:\gt^{(\infty)}$ of infinite Gelfand-Tsetlin schemes are in
bijection, thanks to Theorem \ref{2.1}, with $\mathcal M_p(\Omega)$. Let us
make this bijection more explicit.

Given a signature $\la\in\gt_N$, denote by $\la^+$ and $\la^-$ its positive and
negative parts. These are two partitions (or Young diagrams) with
$\ell(\la^+)+\ell(\la^-)\le N$, where $\ell(\,\cdot\,)$ is the number of
nonzero rows of a Young diagram. In other words,
$$
\la=(\la_1^+,\dots,\la_k^+,0,\dots,0,-\la_l^-,\dots,-\la_1^-),\qquad
k=\ell(\la^+),\quad l=\ell(\la^-).
$$

Given a Young diagram $\nu$, denote by $d(\nu)$ the number of diagonal boxes in
$\nu$. Introduce {\it Frobenius coordinates\/} of $\nu$ via
$$
p_i(\nu)=\nu_i-i,\quad q_i(\nu)=\nu\,'_i-i,\qquad i=1\dots,d(\nu),
$$
where $\nu\,'$ stands for the transposed diagram. We also set
$$
p_i(\nu)=q_i(\nu)=0, \qquad i> d(\nu).
$$

An element $\underline\la=(\la^{(1)}\prec\la^{(2)}\prec\dots)
\in\gt^{(\infty)}$, which can be viewed as an infinite increasing path in the
Gelfand-Tsetlin graph $\gt$, is called {\it regular\/} if there exist limits
$$
\alpha_i^\pm=\lim_{N\to\infty}\frac{p_i(\la^{(N)})}{N},\quad
\beta_i^\pm=\lim_{N\to\infty}\frac{q_i(\la^{(N)})}{N},\quad i=1,2,\dots,\quad
\delta^\pm=\lim_{N\to\infty} \frac{|\la^\pm|}{N}.
$$
The corresponding point $\omega=(\alpha^\pm,\beta^\pm,\delta^\pm)\in\Omega$ is
called the {\it end\/} of this path.

\begin{theorem}[\cite{Ols03}]\label{7.1}
Any central measure on $\gt^{(\infty)}$ is supported by the Borel set of
regular paths. Pushforward of such measures under the map that takes a regular
path to its end, establishes an isomorphism between the space of central
measures on $\gt^{(\infty)}$ and $\mathcal M_p(\Omega)$.
\end{theorem}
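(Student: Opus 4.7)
The plan is to prove the two halves of the theorem by combining the abstract Choquet-type decomposition already invoked in Theorem \ref{2.1} with an explicit ``law of large numbers'' for extreme central measures. Concretely, I would argue first that it suffices to establish the claim for extreme central measures, and then verify for such measures that the Frobenius coordinates of a sample path, properly normalized, converge almost surely to the parameters of the corresponding point of $\Om$.

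First I would recall from the proof of Theorem \ref{2.1} that every central $\mu^{(\infty)}\in\mathcal M_p(\gt^{(\infty)})$ admits a unique barycentric decomposition $\mu^{(\infty)}=\int_{\Ex\Delta}\mu^{(\infty)}_\tau\,dM(\tau)$, and that Theorem 1.3 of \cite{Ols03} identifies $\Ex\Delta$ with $\Om$, turning $M$ into an element of $\mathcal M_p(\Om)$ — this is precisely the measure produced by Theorem \ref{2.1}. Thus, once I show that for each $\om\in\Om$ the extreme central measure $\mu^{(\infty)}_\om$ is supported on regular paths whose common end equals $\om$, the pushforward assertion follows by integration: the ``end'' map sends $\mu^{(\infty)}_\om$ to $\delta_\om$, hence sends $\mu^{(\infty)}$ to $M$.

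The core analytic step is therefore to prove, for each extreme $\mu^{(\infty)}_\om$, the almost sure convergences
\begin{equation*}
\frac{p_i(\la^{(N)})}N\to\al_i^\pm(\om),\qquad
\frac{q_i(\la^{(N)})}N\to\be_i^\pm(\om),\qquad
\frac{|\la^{(N),\pm}|}N\to\de^\pm(\om).
\end{equation*}
For this I would invoke the Vershik--Kerov ergodic method for branching graphs (see \cite{VK82}, \cite{OO98}, \cite{Ols03}): for an extreme central measure the ``relative dimension'' cocycle $\La^\infty_N(\om,\la^{(N)})/\di_N(\la^{(N)})$ converges $\mu^{(\infty)}_\om$-almost surely to the value of the extreme character of $U(\infty)$ attached to $\om$. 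Feeding this into the determinantal expression \eqref{eq2.1}--\eqref{eq2.2} for $\La^\infty_N$ — equivalently, into the asymptotic behavior of normalized rational Schur functions $s_{\la^{(N)}}(u_1,\dots,u_N,1,\dots,1)$ as $N\to\infty$ — yields the stated convergences via the Voiculescu--Edrei factorization of the limiting generating function, whose zeros and poles are the Frobenius-type coordinates of $\om$. Measurability of the set of regular paths is automatic as it is defined by countably many limits of continuous functions, and measurability of the end map is inherited from its coordinate-wise definition as a pointwise limit.

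The main obstacle is this last almost-sure identification: it fuses the purely abstract statement (limits exist a.s.\ under extreme measures) with the concrete representation-theoretic content (the limits must equal the parameters of $\om$). The abstract half follows cleanly from the martingale/ergodic machinery for branching graphs, but matching the limits to the coordinates of $\om$ requires the fine asymptotic analysis of characters carried out in \cite{Voi76}, \cite{VK82}, \cite{OO98}, \cite{Ols03}, where the Edrei-type factorization of the limiting function $\Phi_\om(u)$ is identified with the limiting character. Once these ingredients are imported, the rest of the argument is essentially formal bookkeeping between the decomposition of Theorem \ref{2.1} and the ``end-of-path'' map.
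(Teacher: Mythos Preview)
The paper does not supply a proof of this theorem: it is stated as a result of \cite{Ols03} and the only accompanying text is ``We refer the reader to Section 10 of \cite{Ols03} for details.'' Your sketch is therefore not competing with a proof in the paper but with the argument in \cite{Ols03}, and it correctly identifies the two-step structure used there: reduce via the Choquet-type decomposition (Theorem \ref{2.1}) to extreme central measures, then apply the Vershik--Kerov ergodic/law-of-large-numbers argument to show that under each extreme measure $\mu^{(\infty)}_\om$ almost every path is regular with end $\om$.

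One point of imprecision: your formulation of the martingale step is slightly garbled. The object that converges almost surely under $\mu^{(\infty)}_\om$ is not the ratio $\La^\infty_N(\om,\la^{(N)})/\di_N\la^{(N)}$ (this already depends on $\om$, so its ``convergence to the extreme character at $\om$'' is circular). Rather, the ergodic method shows that for \emph{any} extreme central measure, the normalized Frobenius/row-column coordinates of $\la^{(N)}$ converge almost surely; the identification of the limit with the coordinates of $\om$ then comes from the uniqueness part of the Edrei--Voiculescu classification combined with the asymptotic formula for normalized characters (this is the content of \cite{VK82}, \cite{OO98}, and Section 10 of \cite{Ols03}). With that correction, your outline matches the intended argument.
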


We refer the reader to Section 10 of \cite{Ols03} for details.

\subsection{Matrices of transition rates on $\gt^{(N)}$}

With $E_N=\gt_N$, $E^{(N)}=\gt^{(N)}$ and $Q_N=\D^{(N)}$, let us write out the
specialization of the matrix $Q^{(N)}$ from Subsection \ref{Construction}. We
will use the notation $\mathbf D^{(N)}$ for the resulting matrix of transition
rates on $\gt^{(N)}$. As for the parameters, we will use \eqref{eq4.1} and
\eqref{eq4.2} as before.

To any $\underline\la\in\gt^{(N)}$ we associate an array $\{l_i^j\mid 1\le i\le
j,\; 1\le j\le N \}$ using $l_i^j=\la^{(j)}_i+j-i$. In these coordinates, the
interlacing conditions $\la^{(j)}\prec\la^{(j+1)}$ take the form
$$
l^{j+1}_i> l_i^j\ge l_{i+1}^{j+1}
$$
for all meaningful values of $i$ and $j$.\footnote{One could make the
interlacing condition more symmetric (both inequalities being strict) by
considering the coordinates $\wt l_i^j=\la_i^j+(j+1)/2-i$ instead. This would imply
however that $\wt l^j_i\in \Z+1/2$ for odd $j$ while $\wt l_i^j\in \Z$ for even
$j$.}

Similarly, assign $\gt^{(N)}\ni\underline\nu
\longleftrightarrow\{n_i^j=\nu^{(j)}_i+j-i\}_{1\le i\le j, 1\le j\le N}$.
Gathering all the definitions together, we obtain that the off-diagonal entries
of $\mathbf D^{(N)}$ have the form
$$
\mathbf D^{(N)}(\underline\la,\underline\nu)=\begin{cases} (l_i^k-z-k+1)(l_i^k-z'-k+1),\\
(l_i^k+w)(l_i^k+w'),
\end{cases}
$$
where for the first line we must have $i$, $k$ and $l$, $1\le i\le k\le l\le
N$, such that
$$
l_{i+k-j}^j=l_{i}^k+k-j,\quad n_{i+k-j}^j=l_{i+k-j}^j+1 \quad\text{for
all}\quad k\le j\le l,
$$
and all other coordinates of $\underline \la$ and $\underline\nu$ are equal,
while for the second line we must have $i,k,l$ with $1\le i\le k\le l\le N$
such that
$$
l_i^j=l_i^k,\quad n_i^j=l_i^j-1\quad\text{for all}\quad k\le j\le l,
$$
and all other coordinates of $\underline \la$ and $\underline\nu$ are equal.

The Markov chain generated by $\mathbf D^{(N)}$ can be described as follows:
\smallskip

\noindent (1)\quad Each coordinate $l_i^k$ tries to jump to the right by 1 with
rate $(l_i^k-z-k+1)(l_i^k-z'-k+1)$ and to the left by 1 with rate
$(l_i^k+w)(l_i^k+w')$, independently of other coordinates.
\smallskip

\noindent (2)\quad If the $l^k_i$-clock of the right jump rings but
$l_i^k=l^{k-1}_{i-1}$, the jump is blocked. If its left clock rings but
$l_i^k=l^{k-1}_{i}+1$, the jump is also blocked. (If any of the two jumps were
allowed then the resulting set of coordinates would not have corresponded to an
element of $\gt^{(N)}$ as the interlacing conditions would have been violated.)
\smallskip

\noindent (3)\quad If the right $l^k_i$-clock rings and there is no blocking,
we find the greatest number $l\ge k$ such that $l_{i}^j=l_i^k+k-j$ for
$j=k,k+1,\dots,l$, and move all the coordinates $\{l_{i}^j\}_{j=k}^l$ to the
right by one. Given the change $l^k_i\mapsto l^k_i+1$, this is the minimal
modification of the set of coordinates that preserves interlacing.
\smallskip

\noindent (4)\quad If the left $l^k_i$-clock rings and there is no blocking, we
find the greatest number $l\ge k$ such that $l_{i+j-k}^j=l_i^k$ for
$j=k,k+1,\dots,l$, and move all the coordinates $\{l_{i+j-k}^j\}_{j=k}^l$ to
the left by one. Again, given the change $l^k_i\mapsto l^k_i-1$, this is the
minimal modification of the set of coordinates that preserves interlacing.

\smallskip

Certain Markov chain on interlacing arrays with a similar block-push mechanism
have been studied in \cite{BF08+}, see also \cite{BK10}. In those examples the
jump rates are constant though.

\subsection{Regularity}

 In order to claim the benefits of
Proposition \ref{6.6} and Corollary \ref{6.7}, we need to verify the regularity
of the fiber matrices of transition rates \eqref{eq6.23a}. In our concrete
realization, they take the following form.

For any $N\ge 2$ and any $\kappa\in E_{N-1}=\gt_{N-1}$, the fiber
$E_{\kappa}=:\gt_\kappa\subset \gt_N$ takes the form
$$
\gt_\kappa=\{\la\in\gt_N\mid \kappa\prec\la\}.
$$
Using the coordinates $\{l_i=N+\la_i-i\}_{i=1}^{N}$ for $\la\in\gt_N$ and
$\{n_i=N+\nu_i-i\}_{i=1}^N$ for $\nu\in\gt_N$, the off-diagonal part of the
matrix of transition rates $\D_\kappa:=Q_\kappa$ on the fiber $\gt_\kappa$ has
the form
$$
\D_\kappa(\la,\nu)=\begin{cases} (l_i-z-N+1)(l_i-z'-N+1),\\
(l_i+w)(l_i+w'),
\end{cases}
$$
where for the first line we must have $i$, $1\le i\le N$, such that
$$
n_{i}=l_{i}+1, \qquad n_j=l_j\quad \text{for}\quad j\ne i,
$$
and for the second line we must have
$$
n_{i}=l_{i}-1, \qquad n_j=l_j\quad \text{for}\quad j\ne i.
$$

\begin{proposition}\label{7.2}
For any $N\ge 2$ and any $\kappa\in\gt_{N-1}$, the
matrix of transition rates $\D_\kappa$ on $\gt_\kappa$ is regular.
\end{proposition}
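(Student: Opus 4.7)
The plan is to verify the hypothesis of Proposition \ref{3.3} by means of a Lyapunov function tailored to the fiber's structure.

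First I would observe that the interlacing condition $\kappa\prec\la$ confines the middle coordinates of $\la\in\gt_\kappa$ to the bounded intervals $\la_i\in[\kappa_i,\kappa_{i-1}]$ for $2\le i\le N-1$, while only $\la_1\in[\kappa_1,+\infty)$ and $\la_N\in(-\infty,\kappa_{N-1}]$ are unbounded. Consequently, $\la\to\infty$ in $\gt_\kappa$ is equivalent to $l_1\to+\infty$ or $l_N\to-\infty$.

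Second, I would note that the single-coordinate rate $\D_\kappa(\la,\nu)$ for $\nu=\la\pm e_i$ simplifies on the fiber to the bare one-dimensional rate. Indeed, combining the defining formula \eqref{eq6.23a} for $\D_\kappa$ with the identity \eqref{eq2.0} for $\Lambda^N_{N-1}$ and the explicit form of $\D^{(N)}$ from \eqref{eq3.11}, the $\di_N$-ratios cancel exactly and one is left with $\D(l_i,l_i\pm 1)$ as in \eqref{eq3.4} for the parameters $(u,u',v,v')=(z+N-1,z'+N-1,w,w')$. Thus on $\gt_\kappa$ each coordinate $l_i$ evolves as an independent copy of the bilateral birth-and-death process of Subsection \ref{Case N=1}, with jumps suppressed only when they would carry $\la$ outside $\gt_\kappa$.

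Third, I would take as Lyapunov function $V(\la):=l_1-l_N$, which is bounded below by $N-1$ and tends to $+\infty$ precisely as $\la\to\infty$ in $\gt_\kappa$. Jumps of the middle coordinates leave $V$ invariant, so only the transitions of $l_1$ and $l_N$ contribute to $\D_\kappa V$. A short case analysis --- separating the strips $\{\la_1=\kappa_1\}$ and $\{\la_N=\kappa_{N-1}\}$, where the left jump of $l_1$ or the right jump of $l_N$ respectively is blocked, from their complement --- gives, in the unblocked region,
\begin{equation*}
(\D_\kappa V)(\la)=-s\,V(\la)+O(1),\qquad s:=z+z'+w+w'+2(N-1),
\end{equation*}
while in each blocked strip one of the two contributions reduces to a constant and the remaining one still supplies a negative quadratic leading term (in $l_N$ or $l_1$, respectively). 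Since assumption \eqref{eq4.1} together with $N\ge 2$ forces $s>1$, one obtains a uniform bound $\D_\kappa V\le C$ on $\gt_\kappa$ for some constant $C=C(\kappa,z,z',w,w',N)$.

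Finally, a standard stopping argument using Dynkin's formula applied to the finite set $X_A=\{V\le A\}$ yields $A\cdot\Prob\{T_{\la,X_A}\le t\}\le V(\la)+Ct$, which is smaller than $A\epsi$ once $A$ is chosen large enough for given $\la$, $t$ and $\epsi$. This verifies the hypothesis of Proposition \ref{3.3}, and hence $\D_\kappa$ is regular. The substantive step is the second one: once the $h$-transform cancellation exposes the bare one-dimensional rates on the fiber, the Lyapunov estimate and the non-explosion argument are entirely routine.
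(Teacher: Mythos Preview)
Your argument is correct in substance, though one phrase in Step~3 is inaccurate: on each blocked strip the surviving extreme-coordinate contribution to $\D_\kappa V$ is \emph{linear} (namely $-s\,l_1+\text{const}$ on $\{\la_N=\kappa_{N-1}\}$ and $s\,l_N+\text{const}$ on $\{\la_1=\kappa_1\}$), not quadratic; since $s>0$ this still tends to $-\infty$ in the relevant limit, so the uniform bound $\D_\kappa V\le C$ holds and the rest of the argument goes through.

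The paper takes a shorter, more structural route. Having already recorded (immediately before the statement) that the fiber rates are the bare one-dimensional rates $\D(l_i,l_i\pm1)$, it simply observes that the chain on $\gt_\kappa$ factors as $N$ independent birth-and-death processes, each confined by interlacing to its own interval: the middle coordinates to finite intervals (hence trivially regular), and $l_1$, $l_N$ to half-lines, whose regularity is precisely the one-sided case handled inside the proof of Theorem~\ref{3.4}. Regularity of the product follows at once. Your Lyapunov approach trades that backward reference to Feller's estimates for an explicit drift computation; it is more self-contained and would survive even if Theorem~\ref{3.4} had not been proved, but the paper's decomposition lets the independence structure do all the work and avoids the boundary case analysis entirely.
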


\begin{proof} The interlacing condition in the definition of $\gt_\kappa$
implies that $\D_\kappa$ is the matrix of transition rates for $N$ independent
birth and death processes conditioned to stay within $N$ non-overlapping
intervals inside $\Z$; one interval per process. The results of Section 3.2
show that any such birth and death process is regular as such a process either
lives on a finite set or it is a one-sided birth and death process of the type
considered in the proof of Theorem \ref{3.4}.
\end{proof}

\begin{corollary}\label{7.3}
For any $N\ge 1$, the matrix $\mathbf D^{(N)}$ of transition rates on
$\gt^{(N)}$ is regular, and the corresponding semigroup $(P^{(N)}(t))_{t\ge 0}$
is central. The induced Markov semigroup on $\gt_N$ coincides with that of\/
Section \ref{Semigroups}.
\end{corollary}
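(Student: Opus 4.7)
The plan is to apply the general formalism developed in Subsection \ref{Construction} (and its multivariate extension) to the concrete setting $\A_j=\gt_j$, $Q_j=\D^{(j)}$, and links $\Lambda^j_{j-1}$ of Section \ref{Specialization}. With this dictionary, the matrix $Q^{(N)}$ constructed in Subsection \ref{Construction} is (by inspection of the explicit recipe) exactly the matrix $\mathbf D^{(N)}$ written out above, so Proposition \ref{6.6} and Corollary \ref{6.7} give the three conclusions at once --- provided their hypotheses are verified.

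First I would check the structural hypotheses. Each row of $\Lambda^{j}_{j-1}$ is nonzero only on the finite set of signatures interlacing with a given one, and each row of $\D^{(j)}$ is nonzero only for jumps by $\pm 1$ in one coordinate, hence both have finitely many nonzero entries per row. The commutativity relation $\Lambda^{N+1}_N \D^{(N)} = \D^{(N+1)} \Lambda^{N+1}_N$ required to match the setup of Subsection \ref{Construction} is exactly Proposition \ref{4.2}, once one invokes the parameterization \eqref{eq4.2}: the shift $u_{N+1}=u_N+1$, $u'_{N+1}=u'_N+1$ is precisely the shift $(u,u')\mapsto(\tilde u,\tilde u')$ appearing there.

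Next I would verify the two regularity hypotheses needed by Proposition \ref{6.6}. Regularity of $Q_j=\D^{(j)}$ for every $j\ge 1$ is the content of Theorem \ref{3.7}, which even yields a determinantal formula for the semigroup. Regularity of the fiber matrices $\D_\kappa=Q_\kappa$ for every $\kappa\in\gt_{N-1}$ is precisely Proposition \ref{7.2}: each such fiber is the state space of $N$ independent bilateral birth--death processes confined to $N$ disjoint intervals of $\Z$ (possibly semi-infinite at the two ends), and Theorem \ref{3.4} together with its obvious restriction to a finite or one-sided interval covers all cases.

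With all hypotheses in place, Proposition \ref{6.6} yields regularity of $\mathbf D^{(N)}$ and produces the semigroup $(P^{(N)}(t))_{t\ge 0}$. Corollary \ref{6.7} then gives both remaining assertions: the semigroup is central in the sense of Definition \ref{6.1}, and the induced semigroup on $\gt_N$ (obtained by projecting a central measure via \eqref{eq6.2} and reading off the top marginal) coincides with $(P_N(t))_{t\ge 0}$ of Section \ref{Semigroups}. The only step that might look routine but is worth double-checking is the identification of the explicit matrix $\mathbf D^{(N)}$ written in this section with the abstract matrix $Q^{(N)}$ produced by the recipe of Subsection \ref{Construction}: one has to match the ``push block'' mechanism of items (1)--(4) above with the jumps $(x_k,\dots,x_l)\to(y_k,\dots,y_l)$ defined there using $\hat Q_j$. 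Since in our setting the conflict condition $\Lambda^j_{j-1}(y_j,x_{j-1})=0$ forces a unique choice of $y_j$ (the push is deterministic), the distributions $\hat Q_j(x_j,\cdot,y_{j-1})$ are delta-measures and the two descriptions agree on the nose.
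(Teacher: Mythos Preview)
Your proposal is correct and follows exactly the paper's approach: the paper's proof is the single sentence ``Follows from Proposition \ref{6.6} and Corollary \ref{6.7},'' and you have simply unpacked why these apply by checking their hypotheses (finite rows, commutativity via Proposition \ref{4.2}, regularity of $\D^{(j)}$ via Theorem \ref{3.7}, and regularity of the fiber chains via Proposition \ref{7.2}). Your additional remark that the $\hat Q_j$ distributions are delta-measures in this setting --- so the abstract $Q^{(N)}$ matches the explicit $\mathbf D^{(N)}$ --- is a useful sanity check the paper leaves implicit.
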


\begin{proof} Follows from Proposition \ref{6.6} and Corollary \ref{6.7}.
\end{proof}

\subsection{Exclusion process}

 Observe that the projection of the Markov chain
generated by $\mathbf D^{(N)}$ to the coordinate $l_1^1$ is a bilateral birth
and death process. Furthermore, the jumps of $l_1^2$ are only influenced by
$l_1^1$, the jumps of $l_1^3$ are only influenced by $l_1^1$ and $l_1^2$, and
so on. On the other side, the jumps of $l_k^k$ are only influenced by $\{l_1^1,
l_2^2,\dots l_{k-1}^{k-1}\}$ for any $k\ge 2$.

Hence, the projection of the Markov chain defined by $\mathbf D^{(N)}$ to the
coordinates $(l_N^N\le l_{N-1}^{N-1}\le\dots\le l_1^1<l_1^2<\dots<l_1^N)$ is
also a  Markov chain\footnote{Once again, all the inequalities would be strict
if we considered coordinates $\wt l_i^j=\la_i^j+(j+1)/2-i$.}. The fibers of
this projection are finite, hence, according to Proposition \ref{3.31}, our
Markov chain on $\gt^{(N)}$ collapses to the smaller one, whose matrix of
transition rates is also regular.

Let us project even further to $(l_1^1<l_1^2<\dots<l_1^N)$. Killing extra
coordinates one by one and using the results of Section \ref{Case N=1} to verify the
regularity for the fiber chains, we see that the collapsibility of Proposition
\ref{3.31} holds. Let us give an independent description of the resulting
Markov chain on $\{l_1^j\}_{j\ge 1}$.

Set
\begin{gather*}
\Y_N=\{y_1<y_2<\dots<y_N\mid y_j\in\Z, \, 1\le j\le N\},\\
\Y_\infty=\{y_1<y_2<\dots\mid y_j\in\Z, \, j\ge 1\}.
\end{gather*}
Define the matrix $\mathbf D^{(N)}_{top}$ of transition rates on $\Y_N$ by
$$
\mathbf D^{(N)}(Y',Y'')=\begin{cases} (y_k-z-k+1)(y_k-z'-k+1),\\
(y_k+w)(y_k+w'),
\end{cases}
$$
where for the first line we must have $k$ and $l$, $1\le k\le l\le N$, such
that
$$
y_j'=y_k'+k-j,\quad y_j''=y_j'+1 \quad\text{for all}\quad k\le j\le l,
$$
and all other coordinates of $Y'$ and $Y''$ are equal, while for the second
line we must have
$$
y_k''=y_k'-1,\qquad y_m''=y_m', \text{ for }m\ne k.
$$

In other words, each coordinate $y_k$ tries to jump to the right by 1 with rate
$(y_k-z-j+1)(y_k-z'-j+1)$, and it tries to jump to the left by 1 with rate
$(y_k+w)(y_k+w')$, independently of other coordinates. If the left $y_k$ clock
rings but $y_k=y_{k-1}+1$ then the jump is blocked. If the right $y_k$-clock
rings we find the greatest number $l\ge k$ such that $y_j=y_k+k-j$ for
$j=k,k+1,\dots,l$, and move all the coordinates $\{y_k,\dots,y_l\}$ to the
right by one. One could think of $y_k$ ``pushing'' $y_{k+1},\dots,y_l$.
Alternatively, if one forgets about the labeling one could think of $y_k$
jumping to the first available site on its right.

Clearly, these Markov chains are compatible with projections $\Y_{N+1}\to\Y_N$
that remove the last coordinate. Thus, we obtain a Markov semigroup on
$\varprojlim \Y_N=\Y_\infty$.

This semigroup is a sort of an exclusion process --- it is a one-dimensional
interacting particle system with each site occupied by no more than one
particle (exclusion constraint). A similar system, but with constant jump
rates, was considered in \cite{BF08} and called PushASEP. A system with
one-sided jumps and blocking mechanism as above is usually referred to as {\it
Totally Asymmetric Simple Exclusion Process\/} (TASEP), while a system with
one-sided jumps and pushing mechanism as above is sometimes called {\it long
range\/} TASEP. See \cite{Spi70}, \cite{Lig99} for more information on
exclusion processes.

\begin{proposition}\label{7.4}
The exclusion process defined above has a unique
invariant probability measure. With probability 1 with respect to this measure
there exists a limit $r=\lim_{N\to\infty} y_N/N$, which is a random
variable with values $\ge 1$. Under certain additional restrictions on
parameters $(z,z',w,w')$, see below, the function
$$
\sigma(s)=s(s-1)\frac d{ds}\Prob\{r\le s\}-a_1^2s+\tfrac12(a_3a_4+a_1^2)
$$
is the unique solution of the\/ {\rm(2}nd order nonlinear\/{\rm)} differential
equation
\begin{multline}
-\sigma'\bigl(s(s-1)\sigma'' \bigr)=\bigl(2((s-\tfrac
12)\sigma'-\sigma)\sigma' -a_1a_2a_3a_4\bigr)^2\\
-(\sigma'+a_1^2)(\sigma'+a_2^2)(\sigma'+a_3)^2(\sigma'+a_4^2)
\end{multline}
with boundary condition
$$
\sigma(s)=-a_1^2 s+\tfrac 12(a_3a_4+a_1^2)+\frac{\sin\pi z\sin\pi
z'}{\pi^2}\,s^{-2a_1}+o(s^{-2a_1}), \qquad s\to+\infty,
$$
where the constants $a_1,a_2,a_3,a_4$ are given by
$$
a_1=a_2=\frac{z+z'+w+w'}2,\quad a_3=\frac{z-z'+w-w'}2,\quad
a_4=\frac{z-z'-w+w'}2\,.
$$
\end{proposition}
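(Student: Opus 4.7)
The plan is to deduce all three parts of Proposition \ref{7.4} from the correspondence built up in the previous sections between the exclusion dynamics on $\Y_\infty$ and the boundary semigroup $(P(t))_{t\ge 0}$ on $\Omega$, combined with the determinantal description of the spectral $zw$-measure $M_{z,z',w,w'}$ and the Painlev\'e VI identification of its gap probabilities due to Borodin--Deift \cite{BD02}.

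First I would verify that the exclusion process on $\Y_\infty$ is obtained as an iterated collapse, in the sense of Proposition \ref{3.31}, of the chains generated by $\mathbf D^{(N)}$ on $\gt^{(N)}$, followed by a projective limit. The regularity of the fiber matrices needed at each collapse step is supplied by Proposition \ref{7.2} together with Theorem \ref{3.4} for the associated one-sided birth-and-death process. By Corollary \ref{7.3} the chains $\mathbf D^{(N)}$ induce, via the correspondence of Theorem \ref{7.1}, the boundary semigroup $(P(t))_{t\ge 0}$, whose unique invariant probability measure, by Theorem \ref{5.2}, is $M_{z,z',w,w'}$. The exclusion process therefore inherits a unique invariant probability measure, namely the pushforward of $M_{z,z',w,w'}$ under the composition of these collapses.

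For the existence of $r$, I would invoke Theorem \ref{7.1}: any central probability measure on $\gt^{(\infty)}$ is supported on regular paths. Since $y_N=\la_1^{(N)}+N-1$, regularity forces $y_N/N\to 1+\alpha_1^+$ almost surely under the invariant measure, and the inequality $\alpha_1^+\ge 0$ yields $r\ge 1$. Combined with the description of the coordinates of $\omega$ as a determinantal point process on $\R\setminus\{\pm\tfrac12\}$ from \cite{BO05a}, in which $\tfrac12+\alpha_1^+$ is the rightmost particle of the process, one obtains
$$
\Prob\{r\le s\}=\Prob\bigl\{\text{the determinantal process has no particles in }(s-\tfrac12,+\infty)\bigr\}.
$$

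To prove the Painlev\'e VI identity I would then appeal to \cite{BD02}, which expresses gap probabilities for the Gauss hypergeometric determinantal kernel as isomonodromic tau functions of the sixth Painlev\'e equation. The equation stated in the proposition is the classical Jimbo--Miwa $\sigma$-form, and the constants $a_1,a_2,a_3,a_4$ are the local exponents of the underlying Fuchsian system at its four singular points, which can be read off from the parameter structure of the hypergeometric kernel. The stated boundary condition at $s\to+\infty$ follows from a first-order Fredholm expansion: the no-particle probability on $(s-\tfrac12,+\infty)$ behaves as $1-\int_{s-1/2}^{\infty} K(x,x)\,dx+O(\cdot)$, and the diagonal asymptotic of the hypergeometric correlation kernel as $x\to+\infty$ contributes the prefactor $\sin\pi z\sin\pi z'/\pi^2$. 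The main obstacle is the precise identification step with \cite{BD02}: matching the constants $a_j$ to $(z,z',w,w')$ and aligning the normalizations of the tau function with our gap probability requires careful bookkeeping of the Riemann--Hilbert data, and it is here that the ``additional restrictions on parameters'' alluded to in the statement must be imposed, to rule out degenerate integer or half-integer combinations at which the hypergeometric kernel develops singularities or reduces to a simpler one.
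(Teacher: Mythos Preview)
Your plan is close to the paper's own argument---existence of the invariant measure via the central measure on $\gt^{(\infty)}$ attached to $M_{z,z',w,w'}$, the almost sure existence of $r$ via Theorem~\ref{7.1}, and the Painlev\'e~VI identification via \cite{BD02}---and those three parts are fine.

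There is, however, a genuine gap in your uniqueness argument. You write that because $(P(t))_{t\ge0}$ on $\Omega$ has a unique invariant measure (Theorem~\ref{5.2}), ``the exclusion process therefore inherits a unique invariant probability measure.'' This implication does not follow. Collapsing or projecting a Markov process with a unique invariant measure does not in general yield a process with a unique invariant measure: an invariant measure for the exclusion chain on $\Y_N$ (or on $\Y_\infty$) has no reason to lift to a \emph{central} measure on $\gt^{(N)}$ (or $\gt^{(\infty)}$), so you cannot feed it back into the boundary correspondence and invoke Theorem~\ref{5.2}. The logical direction in Subsection~\ref{Invariant measures} runs from uniqueness on each $E_N$ to uniqueness on the boundary, not the other way.

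The paper instead argues uniqueness directly at each finite level: the exclusion chain on $\Y_N$ is an irreducible continuous-time Markov chain on a countable set (the jump rates are everywhere positive under \eqref{eq4.1}), and such a chain admits at most one invariant probability measure, cf.\ Theorem~1.6 of \cite{And91}. Since any invariant measure on $\Y_\infty$ projects to an invariant measure on each $\Y_N$, uniqueness on every $\Y_N$ forces uniqueness on $\Y_\infty$. You should replace your appeal to Theorem~\ref{5.2} by this irreducibility argument.
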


{\it Remarks\/} 1. The quantity $\lim_{N\to\infty}y_N/N$ can be viewed as the
asymptotic density of the system of particles $(y_j)$ at infinity. Proposition
\ref{7.4} claims that for the invariant measure, this quantity is well-defined
and random.

2. The restrictions on parameters come from Theorem 7.1 of \cite{BD02}. They
can be relaxed, see Remark 7.2 in \cite{BD02} and the end of \S3 in
\cite{Lis09+}.

3. The differential equation above is the so-called $\sigma$-form of the
Painlev\'e VI equation first appeared in \cite{JM81}.
\medskip

\begin{proof}[Proof of Proposition \ref{7.4}]
The invariant measure is simply the projection to
$$
y_1=\la^{(1)},\quad y_2=\lambda_1^{(2)}+1,\quad y_3=\lambda^{(3)}_1+2,\quad
\dots
$$
of the central measure on $\gt^{(\infty)}$ corresponding to the spectral
$zw$-measure. The uniqueness follows from the uniqueness of invariant measure
on countable sets $\Y_N$, cf. Theorem 1.6 of \cite{And91} (a similar argument
was used in the proof of Theorem \ref{5.2}). The existence of
$\lim_{N\to\infty}y_N/N$ follows from Theorem \ref{7.1}. Finally, the
characterization of the distribution of this limit in terms of the Painlev\'e
VI equation was proved in Theorem 7.1 of \cite{BD02}, see \cite{Lis09+} for
another proof.
\end{proof}

\section{Appendix}\label{Appendix}

\subsection{Truncated Gelfand-Tsetlin graph}\label{Jacobi}

Fix two  numbers $k,l=0,1,2,\dots$ not equal to 0 simultaneously. Denote by
$\gt_N(k,l)$ the subset of $\gt_N$ formed by the signatures $\la$ subjected to
the restrictions
$$
k\ge\la_1\ge\dots\ge\la_N\ge-l.
$$
Obviously, this subset is finite and nonempty, and if $\la\in\gt_N(k,l)$ and
$\nu\prec\la$ then $\nu\in\gt_{N-1}(k,l)$. Thus, the union of the sets
$\gt_N(k,l)$ for $N=1,2,\dots$ forms a subgraph of the Gelfand-Tsetlin graph.
Let us denote this truncated graph as $\gt(k,l)$. The definition of the links
$\La^{N+1}_N(\la,\nu)$ in the truncated graph remains the same, the only
difference is that we assume $\la$ and $\nu$ to be vertices of $\gt(k,l)$.

The boundary of $\gt(k,l)$ is the subset $\Omega(k,l)\subset\Omega$ determined
by the restrictions
$$
\al^\pm\equiv0, \quad \ga^\pm=0, \quad \be^+_i=0 \quad (i>k), \quad \be^-_j=0
\quad (j>l),
$$
so that only $k+l$ parameters are nontrivial:
$$
\be^+_1\ge\dots\ge\be^+_k\ge0, \quad \be^-_1\ge\dots\ge\be^-_l\ge0,
$$
where, as before, $\be^+_1+\be^-_1\le1$. The definition of the links
$\La^\infty_N(\om,\la)$ remains the same, only $\om$ is assumed to belong to
$\Om(k,l)$. Viewing $\Om(k,l)$ as a subset of $\R^{k+l}$ one sees that it is a
closed simplex of full dimension.

The boundary is Feller. Since $\Om(k,l)$ is compact, this simply means that the
links $\La^\infty(\om,\la)$ are continuous in $\om\in\Om(k,l)$.

Fix parameters
\begin{equation}\label{eqA.1}
z=k, \quad z'=k+a, \quad w=l, \quad w'=l+b, \quad \text{ where $a,b>-1$}.
\end{equation}
Setting
$$
u=z, \quad u'=z', \quad v=w, \quad v'=w'
$$
in \eqref{3.4} we obtain a truncated birth and death process on
$\{-l,\dots,k\}\subset\Z$; the rates of jumps $k\to k+1$ and $-l\to-l-1$ being
equal to $0$. Note that the nonnegativity of the jump rates on
$\{-l,\dots,k\}\subset\Z$ is ensured by the inequalities $a,b>-1$.

More generally, for any $N\ge 1$ the same expressions as before correctly
determine a matrix of transition rates on $\gt_N(k,l)$. Due to finiteness of
the state space, the existence of the corresponding Markov semigroup $(P_N(t))_{t\ge 0}$
becomes obvious.

The semigroups $(P_N(t))_{t\ge 0}$ with varying $N=1,2,\dots$ are consistent with the
links and thus determine a Feller Markov semigroup $(P(t))_{t\ge 0}$ on the boundary
$\Om(k,l)$.

The expression for $M_{z,z',w,w'\mid N}(\la)$ given in \S5.1 vanishes unless
$\la$ belongs to the subset $\gt_N(k,l)$, and it is strictly positive on this
subset. Thus, the same definition gives us a probability measure on
$\gt_N(k,l)$. The invariance property and the compatibility with the links
$\La^{N+1}_N$ remain valid. The limit measure lives on the boundary $\Om(k,l)$
and it is invariant with respect to the semigroup $(P(t))_{t\ge 0}$.

The whole picture sketched above is consistent with the automorphism of $\gt$
described in Remark \ref{2.0}. More precisely, the shift of all coordinates of
signatures by $1$ amounts to the transformation $k\to k+1$, $l\to l-1$ of the
main parameters (the other two parameters $a$ and $b$ do not change). The use
of this shift automorphism allows one to reduce the case of general parameters
$(k,l)$ to the special case $(n,0)$ with $n=k+l$, which simplifies some
formulas and computations.

Hence, let us now assume that $l=0$. Then the coordinates $\be^-_j$ disappear
and we are left with $n$ coordinates $y_i:=\be_i^+$ subjected to
$$
1\ge y_1\ge\dots\ge y_n\ge0.
$$

The signatures $\la\in\gt_N(n,0)$ may be identified with Young diagrams
contained in the rectangular shape $N\times n$; that is, $\la$ has at most $N$
rows and $n$ columns. Under this identification, one has a simple expression
for the link:
\begin{equation}\label{eqA.2}
\La^\infty_N(\om,\la)
=\di_N\la\cdot\prod_{i=1}^N(1-y_i)^Ns_{\la'}\left(\frac{y_1}{1-y_1},\dots,
\frac{y_n}{1-y_n}\right)
\end{equation}
where $s_{\la'}$ is the Schur polynomial indexed by the transposed diagram
$\la'$.

Further, the invariant measure on the boundary takes the form
\begin{equation}\label{eqA.3}
\const\cdot\prod_{1\le i<j\le n}(y_i-y_j)^2 \cdot\prod_{i=1}^n(1-y_i)^ay_i^b\,
dy_i
\end{equation}
with an appropriate constant prefactor that turns the measure into a
probability distribution. The random $n$-tuple $(y_1,\dots,y_n)\subset[0,1]$
with this distribution is known under the name of the {\it $n$-particle Jacobi
orthogonal polynomial ensemble\/}.

The fact that the integral of \eqref{eqA.2} against the distribution
\eqref{eqA.3} reproduces the measure $M_{n,n+a,0,b\mid N}$ on $\gt_N(n,0)$ can
be verified directly; this is a version of the Selberg integral.

For more detail, see \cite{Ker03} and \cite{BO05a}.

Since the boundary has finite dimension, there is a possibility to describe the
Markov process defined by $(P(t))_{t\ge 0}$ more directly; this is done in the theorem
below.

Consider the ordinary differential operator associated with the Jacobi
orthogonal polynomials with weight $(1-y)^ay^b$,
$$
D^{(a,b)}=y(1-y)\frac{d^2}{dy^2}+[b+1-(a+b+2)y]\frac{d}{dy}.
$$
More generally, abbreviate
$$
V_n=V_n(y_1,\dots,y_n)=\prod_{1\le i<j\le n}(y_i-y_j)
$$
and consider the partial differential operator in variables $y_1,\dots,y_n$
given by
\begin{gather*}
D^{(a,b)}_n :=\frac1{V_n}\circ\left(\sum_{i=1}^n
\left(y_i(1-y_i)\frac{\partial^2}{\partial
y_i^2}+[b+1-(a+b+2)y_i]\frac{\partial}{\partial y_i}\right)\right)\circ
V_n+(\cdots)\\
=\sum_{i=1}^n \left(y_i(1-y_i)\frac{\partial^2}{\partial
y_i^2}+\left[b+1-(a+b+2)y_i+\sum_{j:\, j\ne
i}\frac{2y_i(1-y_i)}{y_i-y_j}\right]\frac{\partial}{\partial y_i}\right),
\end{gather*}
where $(\cdots)$ stands for the constant annihilating the constant term arising
from the conjugation by the Vandermonde determinant:
$$
(\cdots)=\sum_{m=0}^{n-1}m(m+a+b+1).
$$

Although the coefficients in front of the first order derivatives have
singularities on the hyperplanes $y_i=y_j$, the operator is well defined on
smooth {\it symmetric\/} functions in variables $y_1,\dots,y_n$, and it
preserves this space. It also preserves the space of symmetric polynomials.

\begin{theorem}\label{A.1}
Let $z=n$, $z'=n+a$, $w=0$, $w'=b$, where $a,b>-1$. Let
$y_1=\be_1^+, \dots, y_n=\be_n^+$ be the coordinates on the boundary $\Om(n,0)$
of the truncated graph\/ $\gt(n,0)$, and recall that\/ $1\ge y_1\ge\dots\ge
y_n\ge0$.

The Markov process on the simplex\/ $\Om(n,0)$ determined by the Markov
semigroup $(P(t))_{t\ge 0}$ is a diffusion whose infinitesimal generator is the
differential operator $D^{(a,b)}_n$ with an appropriate domain containing the
space of all symmetric polynomials in variables $y_1,\dots,y_n$.
\end{theorem}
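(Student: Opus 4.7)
The plan is to identify the infinitesimal generator $A$ of $(P(t))_{t\ge 0}$, restricted to symmetric polynomials, with $D^{(a,b)}_n$. Since $\Om(n,0)$ is compact, $C_0(\Om(n,0))=C(\Om(n,0))$, and by Stone--Weierstrass the algebra of symmetric polynomials in $y_1,\dots,y_n$ is dense. Proposition~\ref{3.3b}, whose proof (as noted there) applies verbatim to a compact state space, then reduces the statement to verifying, for every symmetric polynomial $f$, the pointwise identity
\[
\lim_{t\downarrow 0}\frac{P(t)f(\om)-f(\om)}{t}=(D^{(a,b)}_nf)(\om),\qquad\om\in\Om(n,0),
\]
together with the automatic fact that $D^{(a,b)}_nf$ is continuous on $\Om(n,0)$.

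Next, I would reduce the pointwise identity to a finite-level intertwining. Formula \eqref{eqA.2} shows that $\La^\infty_N(\,\cdot\,,\la)$ is a symmetric polynomial in $y_1,\dots,y_n$; a dimension count gives $|\gt_N(n,0)|=\binom{N+n}{n}=\dim\{\text{sym.\ polys supported by partitions inside the $N\times n$ rectangle}\}$, and as $N\to\infty$ the images $\La^\infty_N(\R^{\gt_N(n,0)})$ exhaust all symmetric polynomials. Writing $f=\La^\infty_N g$, the intertwining $P(t)\La^\infty_N=\La^\infty_N P_N(t)$ of Proposition~\ref{1.4} combined with differentiability at $t=0$ of the finite-dimensional semigroup $P_N(t)=\exp(t\D^{(N)})$ reduces the problem to the operator identity
\[
D^{(a,b)}_n\circ\La^\infty_N=\La^\infty_N\circ\D^{(N)}\qquad\text{on }\R^{\gt_N(n,0)}.
\]

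To verify this finite-dimensional intertwining, I would diagonalize both sides simultaneously. On the $\gt_N$ side, reversibility of $\D^{(N)}$ with respect to $M_{z,z',w,w'\mid N}$ (noted in the proof of Theorem~\ref{5.2}) together with the eigenrelation \eqref{eq5.2} for the Askey--Lesky polynomials $\{p_j\}$ supplies an eigenbasis of Schur-type determinants $f^{(N)}_\mu(\la):=\det\bigl[p_{\mu_j+N-j}(l_i)\bigr]/V_N(l)$ indexed by partitions $\mu$ with $\ell(\mu)\le n$, with explicit eigenvalues obtained by the Leibniz expansion of the determinant. On the $\Om(n,0)$ side, an analogous Vandermonde conjugation identifies the multivariate Jacobi polynomials $J_\mu:=V_n(y)^{-1}\det\bigl[p^{(a,b)}_{\mu_j+n-j}(y_i)\bigr]$ as eigenfunctions of $D^{(a,b)}_n$. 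The principal obstacle is then to identify $\La^\infty_N f^{(N)}_\mu$ with a nonzero constant multiple of $J_\mu$ and to check that the two eigenvalues agree under the specialization \eqref{eqA.1}. Under this specialization, the generating function \eqref{eq2.2} collapses to $\Phi_\om(u)=\prod_{i=1}^n(1+y_i(u-1))$, so the link \eqref{eq2.1} becomes a tractable finite determinantal expression in $y_1,\dots,y_n$; the desired identification then reduces to a generating-function comparison between Karlin--McGregor-type sums of discrete Askey--Lesky polynomials and Schur-type determinants of continuous Jacobi polynomials on $[0,1]$, after which a direct if tedious computation absorbs all $N$-dependence into the constant $d_N$ and yields the eigenvalue match.
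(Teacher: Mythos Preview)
The paper states explicitly in its organizational overview that Section~\ref{Appendix} is ``an appendix without proofs''; Theorem~\ref{A.1} is announced there but not proved. So there is no argument in the paper to compare against, and your sketch must stand on its own.

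Your reduction is correct: via $P(t)\La^\infty_N=\La^\infty_N P_N(t)$, finiteness of $\gt_N(n,0)$, and Proposition~\ref{3.3b}, everything comes down to the operator identity $D^{(a,b)}_n\circ\La^\infty_N=\La^\infty_N\circ\D^{(N)}$. But the step you flag as ``the principal obstacle'' is in fact the entire content of the theorem, and you do not carry it out. Under the degenerate parameters \eqref{eqA.1} the Askey--Lesky polynomials become Hahn polynomials on $\{0,\dots,N+n-1\}$; what is needed is an exact intertwining between the Hahn and Jacobi hierarchies through the kernel \eqref{eqA.2}, not merely an asymptotic one. There is also an index mismatch you have not addressed: your $f^{(N)}_\mu$ are indexed by partitions with $\mu_1\le n$ and $\ell(\mu)\le N$, whereas the $J_\mu$ in $n$ variables require $\ell(\mu)\le n$; the appearance of $s_{\la'}$ in \eqref{eqA.2} signals that the correspondence should involve a transposition $\mu\mapsto\mu'$, and the eigenvalue match (tracking $d_N$, the shift in $(u,u')$, and the Jacobi eigenvalues) must be verified under that transposition. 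Calling all of this ``a direct if tedious computation'' leaves the proof essentially unfinished. A cleaner alternative is to verify the intertwining directly on a generating set, exploiting the factored form $\Phi_\om(u)=\prod_{i=1}^n(1+y_i(u-1))$; the paper's own remark~3 after Theorem~\ref{B.2} indicates yet another route, via the formal generator $\DD$, though that theorem is likewise stated without proof.
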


As shown in \cite{Gor09}, the same diffusion process also arises in a scaling
limit transition from some {\it discrete time\/} Markov chains on the sets
$\gt_N(n,0)$, as $N\to\infty$.

The above description of the infinitesimal generator remains true in the
general case \eqref{eqA.1} of the degenerate series parameters. The only change
concerns the correspondence between the $\be^\pm$-coordinates  and the
$y$-coordinates; now it takes the form
$$
(y_1,\dots,y_n)=(1-\be^-_l,\dots,1-\be^-_1, \be^+_1,\dots,\be^+_k), \qquad
n:=k+l.
$$

\subsection{The formal generator}\label{Formal generator}

A natural question is how to extend the explicit description of the
infinitesimal generator obtained in Theorem \ref{A.1} to the case of general
(admissible) values of parameters $(z,z',w,w')$. There are some indications
that a direct generalization is impossible, in the sense that the generator
cannot be expressed as a second order differential operator in the natural
coordinates $(\al^\pm,\be^\pm,\de^\pm)$ on $\Om$. \footnote{A possible
explanation is that these coordinate functions are not in the domain of the
generator.} Instead of this, we present below an explicit expression for the
generator in a different system of coordinates.

Introduce the functions $\varphi_n=\varphi_n(\om)$ on $\Om$, $n\in\Z$, as the
coefficients in the Laurent expansion of $\Phi_\om(u)$, see \eqref{eq2.2} and
\eqref{eq2.3},
$$
\Phi_\om(u)=\sum_{n\in\Z}\varphi_n(\om)u^n.
$$
The functions $\varphi_n$ are continuous and nonnegative on $\Om$, and they
satisfy the relation
$$
\sum_{n\in\Z}\varphi_n(\om)\equiv1, \qquad \om\in\Om,
$$
which is an immediate consequence of the fact that $\Phi_\om(u)$ takes value
$1$ at $u=1$ for all $\om\in\Om$.

Let us extend this definition by setting, for any $N=1,2,\dots$ and any
signature $\nu\in\gt_N$,
$$
\varphi_\nu(\om)=\det\bigl[\varphi_{\nu_i-i+j}(\om)\bigr]_{i,j=1}^N, \qquad
\om\in\Om.
$$
For instance, for $N=2$ we have $\nu=(\nu_1,\nu_2)$ and
$$
\varphi_{(\nu_1,\nu_2)}=\vmatrix \varphi_{\nu_1} & \varphi_{\nu_1+1}\\
\varphi_{\nu_2-1} & \varphi_{\nu_2}\endvmatrix =
\varphi_{\nu_1}\varphi_{\nu_2}-\varphi_{\nu_1+1}\varphi_{\nu_2-1}.
$$

The functions $\varphi_\nu$ appear in the expansion
$$
\prod_{i=1}^N\Phi_\om(u_i)=\sum_{\nu\in\gt_N}\varphi_\nu(\om)s_\nu(u_1,\dots,u_N).
$$
Their fundamental role is explained by \eqref{eq2.1}.

By definition, the functions $\varphi_\nu$ are contained in the algebra of
functions generated by the functions $\varphi_n$, $n\in\Z$. Conversely, any
monomial in $\varphi_n$'s of degree $N$ can be expanded into a series on the
functions $\varphi_\nu$, $\nu\in\gt_N$. Namely, for arbitrary integers
$n_1,\dots,n_N$, one has
$$
\varphi_{n_1}\dots\varphi_{n_N}=\sum_{\nu\in\gt_N}K(\nu\mid
n_1,\dots,n_N)\varphi_\nu,
$$
where the numbers $K(\dots)$ are defined as the ``rational'' analogs of the
Kostka numbers, that is, these are the coefficients in the Laurent expansion of
the rational Schur functions,
$$
s_\nu(u_1,\dots,u_N)=\sum_{n_1,\dots,n_N\in\Z}K(\nu\mid n_1,\dots,n_N)
u_1^{n_1}\dots u_N^{n_N}.
$$
For instance, for $N=2$ and $\nu=(\nu_1,\nu_2)$ we have
$$
K(\nu_1,\nu_2\mid n_1,n_2)=\begin{cases} 1, & \text{if $n_1+n_2=\nu_1+\nu_2$
and $|n_1-n_2|\le\nu_1-\nu_2$,}\\ 0, & \textrm{otherwise},
\end{cases}
$$
which implies
$$
\varphi_{n_1}\varphi_{n_2}=\sum_{p=0}^\infty\varphi_{(n_1+p,n_2-p)}.
$$

\begin{definition}\label{B.1}
Fix an arbitrary quadruple $(z,z',w,w')$ of complex
parameters and introduce the following formal differential operator in
countably many variables $\{\varphi_n:n\in\Z\}$
$$
\DD=\sum_{n\in\Z}A_{nn}\frac{\pd^2}{\pd\varphi_n^2}+2\sum_{\substack{n_1,n_2\in\Z\\
n_1>n_2}} A_{n_1 n_2}\frac{\pd^2}{\pd\varphi_{n_1}\pd\varphi_{n_2}}
+\sum_{n\in\Z}B_n\frac{\pd}{\pd\varphi_n}, \
$$
where, for any indices $n_1\ge n_2$,
$$
\gathered A_{n_1
n_2}=\sum_{p=0}^\infty(n_1-n_2+2p+1)(\varphi_{n_1+p+1}\varphi_{n_2-p}
+\varphi_{n_1+p}\varphi_{n_2-p-1})\\
-(n_1-n_2)\varphi_{n_1}\varphi_{n_2}
-2\sum_{p=1}^\infty(n_1-n_2+2p)\varphi_{n_1+p}\varphi_{n_2-p}
\endgathered
$$
and, for any $n\in\Z$,
$$
\gathered B_n=(n+w+1)(n+w'+1)\varphi_{n+1}+(n-z-1)(n-z'-1)\varphi_{n-1}\\
-\bigl((n-z)(n-z')+(n+w)(n+w')\bigr)\varphi_n.
\endgathered
$$
\end{definition}

\medskip

Note that only coefficients $B_n$ depend on the parameters $(z,z',w,w')$.

Assume now that the quadruple $(z,z',w,w')$ satisfies the condition
\eqref{4.1}. As above, let $P_N(t)$ ($N=1,2,\dots$) and $P(t)$ be the
corresponding Markov semigroups and let $A_N$ and $A$ stand for their
infinitesimal generators. These are densely defined operators in the Banach
spaces $C_0(\gt_N)$ and $C_0(\Om)$, respectively. We know that all finitely
supported functions on $\gt_N$ belong to the domain of $A_N$. This implies, cf.
\eqref{eq2.1}, that all the functions $\varphi_\nu$ lie in the domain of $A$.

\begin{theorem}\label{B.2}
Let $A$ be the infinitesimal generator of the Markov semigroup $(P(t))_{t\ge 0}$
with
parameters $(z,z',w,w')$ satisfying condition \eqref{eq4.1}, and $\DD$ be the formal
differential operator introduced in Definition \ref{B.1}. Regard $\DD$ as an
operator from the space $\C[\varphi_n: n\in\Z]$ of polynomials in countably
many variables $\varphi_n$, $n\in\Z$, to the larger space $\C[[\varphi_n:
n\in\Z]]$ of formal series in the same variables.

Then for any $N=1,2,\dots$ and any $\nu\in\gt_N$ one has
$$
\DD\varphi_\nu=A\varphi_\nu.
$$

Moreover, $\DD$ is the only formal second order differential operator in
variables $\varphi_n$, $n\in\Z$,  with such a property.
\end{theorem}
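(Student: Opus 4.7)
The plan is to split the statement into existence of the identity $\DD\varphi_\nu=A\varphi_\nu$ and its uniqueness.

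For existence, my starting point is the infinitesimal commutativity $A\circ\La_N^\infty=\La_N^\infty\circ\D^{(N)}$, which follows from Proposition~\ref{1.4} combined with Corollary~\ref{3.3c} (finitely supported $f\in C_0(\gt_N)$ lie in $D(A_N)$ and $A_Nf=\D^{(N)}f$ there) by exactly the argument used in the proof of Theorem~\ref{4.1}. Applying both sides to $\delta_\nu\in C_0(\gt_N)$ and using $\La_N^\infty\delta_\nu=\di_N\nu\cdot\varphi_\nu$ yields
\[
A\varphi_\nu\;=\;\frac{1}{\di_N\nu}\sum_{\mu\in\gt_N}\di_N\mu\cdot\D^{(N)}(\mu,\nu)\,\varphi_\mu,
\]
a finite linear combination of $\varphi_\mu$'s where $\mu$ ranges over $\nu$ itself and its one-coordinate neighbors $\nu\pm e_i$, with explicit coefficients read off from \eqref{eq3.11}.

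The heart of the proof is then to compute $\DD\varphi_\nu$ directly from Definition~\ref{B.1} and see that the same combination appears. The cleanest route is to use the generating identity $\prod_{i=1}^N\Phi_\omega(u_i)=\sum_{\nu\in\gt_N}\varphi_\nu(\omega)\,s_\nu(u_1,\dots,u_N)$ and apply $\DD$ termwise. A direct calculation shows $\sum_nB_nu^n=\mathcal{L}(u)\Phi_\omega(u)$ with
\[
\mathcal{L}(u)=(u^{-1}-1)(u\pd_u+w)(u\pd_u+w')+(u-1)(u\pd_u-z)(u\pd_u-z'),
\]
so the first-order piece of $\DD$ applied to the product becomes $\sum_{i=1}^N\mathcal{L}(u_i)\prod_j\Phi(u_j)$; the operator $\mathcal{L}$ is precisely the birth-and-death generator \eqref{eq3.4} translated through the map $f\mapsto\sum_nf(n)u^n$. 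The second-order piece contributes $\sum_{i\ne j}\mathcal{A}(u_j,u_i)\prod_{k\ne i,j}\Phi(u_k)$ with $\mathcal{A}(u,v)=\sum_{n_1,n_2}A_{n_1n_2}u^{n_1}v^{n_2}$. Extracting the Schur coefficient at $s_\nu(u)$ on each side and invoking the Jacobi--Trudi/Weyl determinantal formula reduces the identity to a local algebraic check around the signature $\nu$.

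The main obstacle will be the closed-form resummation of $\mathcal{A}(u,v)$ and the matching with the Gelfand--Tsetlin rates: the infinite $p$-series in the definition of $A_{n_1n_2}$ must, after antisymmetrization in $(u_1,u_2)$ and multiplication by the Vandermonde $\prod_{i<j}(u_i-u_j)$, contribute to the $s_\nu$-coefficient only through the neighbors $\mu=\nu\pm e_i$, with the weights $\di_N\mu\cdot\D^{(N)}(\mu,\nu)/\di_N\nu$ demanded by the first display. The parameter $p$ has a transparent combinatorial meaning --- it records the length of a coincidence block in a Gelfand--Tsetlin scheme --- so this step is essentially an algebraic translation of the block-push description of the stochastic dynamics in Section~9 into the Schur-expansion side. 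Once the identity is verified for $N=2$, the case of general $N$ is automatic, because the generating-function computation is uniform in $N$ and the second-order part only pairs two factors $\Phi(u_i),\Phi(u_j)$ at a time.

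For uniqueness, the $N=1$ identity $\DD\varphi_n=B_n$ fixes the first-order coefficients. The $N=2$ identity $\varphi_{(n_1,n_2)}=\varphi_{n_1}\varphi_{n_2}-\varphi_{n_1+1}\varphi_{n_2-1}$ yields $\DD\varphi_{(n_1,n_2)}=2(A_{n_1n_2}-A_{n_1+1,n_2-1})+(\text{known first-order terms})$, so any competing operator $\widetilde{\DD}$ has second-order coefficients $\widetilde{A}$ with the same anti-diagonal increments as $A$. One then invokes the graded structure visible in Definition~\ref{B.1}: every monomial $\varphi_a\varphi_b$ appearing in $A_{n_1n_2}$ satisfies $a+b\in\{n_1+n_2,n_1+n_2+1\}$ with $a\ge n_1$ and $b\le n_2$ up to a bounded shift, hence its coefficient in $A_{n_1+k,n_2-k}$ vanishes for all $k$ larger than a bound depending on the monomial. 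The telescoping identity $A_{n_1n_2}=\sum_{k\ge 0}(A_{n_1+k,n_2-k}-A_{n_1+k+1,n_2-k-1})$ therefore makes sense in $\C[[\varphi_n:n\in\Z]]$ and recovers each $A_{n_1n_2}$ uniquely, completing the proof.
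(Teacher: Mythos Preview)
The paper explicitly states that Section~\ref{Appendix} is ``an appendix without proofs,'' so there is no proof in the paper to compare against. I will therefore evaluate your sketch on its own merits.

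\medskip

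\textbf{Existence.} Your strategy is sound. The intertwining $P(t)\La^\infty_N=\La^\infty_N P_N(t)$ from Proposition~\ref{1.4} does yield $A\varphi_\nu=\frac{1}{\di_N\nu}\sum_\mu\di_N\mu\cdot\D^{(N)}(\mu,\nu)\varphi_\mu$, and your generating-function approach to $\DD\varphi_\nu$ is the natural one. The first-order computation $\sum_n B_n u^n=\mathcal{L}(u)\Phi_\omega(u)$ checks out. You correctly identify the closed-form resummation of $\mathcal A(u,v)$ and its matching with the $\D^{(N)}$-rates as the main obstacle; this step is only sketched, but the reduction to $N=2$ is valid since a second-order operator hits at most two factors of $\prod_i\Phi(u_i)$ at a time.

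\medskip

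\textbf{Uniqueness: there is a genuine gap.} Your telescoping identity
\[
A_{n_1n_2}=\sum_{k\ge 0}\bigl(A_{n_1+k,n_2-k}-A_{n_1+k+1,n_2-k-1}\bigr)
\]
converges coefficientwise because the \emph{specific} coefficients $A_{n_1n_2}$ of Definition~\ref{B.1} have the graded structure you describe. But for a competing $\widetilde A$ you know only that the differences $\widetilde A_{n_1,n_2}-\widetilde A_{n_1+1,n_2-1}$ agree with those of $A$; you do not know that $\widetilde A_{n_1+K,n_2-K}\to 0$ coefficientwise, so the telescoping sum need not recover $\widetilde A_{n_1n_2}$. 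All your argument actually establishes is that $\widetilde A_{n_1n_2}-A_{n_1n_2}=c_{n_1+n_2}$ depends only on $n_1+n_2$.

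Worse, the constraints from $N\ge 3$ do not kill this ambiguity. The perturbation $\sum_{a,b\in\Z}c_{a+b}\,\partial_{\varphi_a}\partial_{\varphi_b}$ (for arbitrary $c_m\in\C[[\varphi_n]]$) annihilates every $\varphi_\nu$: applied to $\det[\varphi_{m_i+j}]_{i,j=1}^N$ it gives
\[
\sum_{\sigma}\operatorname{sgn}(\sigma)\sum_{i\ne j}c_{m_i+m_j+\sigma(i)+\sigma(j)}\prod_{k\ne i,j}\varphi_{m_k+\sigma(k)},
\]
and for fixed $(i,j)$ and fixed $\sigma|_{\{1,\dots,N\}\setminus\{i,j\}}$ the two choices of $(\sigma(i),\sigma(j))$ have the same $\sigma(i)+\sigma(j)$ but opposite sign, so the sum vanishes. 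Hence $\DD+\sum_{a,b}c_{a+b}\partial_{\varphi_a}\partial_{\varphi_b}$ is another second-order operator from $\C[\varphi_n]$ to $\C[[\varphi_n]]$ satisfying the same identity on every $\varphi_\nu$. Either the uniqueness assertion carries an implicit extra hypothesis (e.g.\ a growth or support condition on the coefficients $A_{n_1n_2}$ as $n_1-n_2\to\infty$, which the formulas of Definition~\ref{B.1} do satisfy), or your argument needs to invoke such a condition explicitly to rule out these perturbations.
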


Note some properties of $\DD$:

1. Formal application of  $\DD$ to the infinite series $\sum_{n\in\Z}\varphi_n$
gives $0$. This agrees with the fact that the sum of this series on $\Om$
equals 1 and the fact that $A1=0$.

2. For any fixed integer $m$, $\DD$ is invariant under the change of variables
$\varphi_n\to \varphi_{n+m}$ ($n\in\Z$) combined with the shift of parameters
$$
z\to z+m, \quad z'\to z'+m, \quad w\to w-m, \quad w'\to w-m,
$$
cf. Remark 3.7 in \cite{BO05a}.

3. Set $z=k$ and $w=l$, where $(k,l)$ is a couple of nonnegative integers not
equal to $(0,0)$. Then $\DD$ respects the relations
$$
\dots=\varphi_{-l-2}=\varphi_{-l-1}=0=\varphi_{k+1}=\varphi_{k+2}=\dots
$$
and thus can be reduced to an operator in the polynomial algebra
$\C[\varphi_{-l},\dots,\varphi_k]$. More precisely, this means the following
assertion: When $\DD$ is applied to a monomial containing at least one variable
$\varphi_n$ with index $n$ outside $[-l,k]$ then all monomials entering the
resulting series with nonzero coefficients have the same property. Indeed, this
follows from the structure of the coefficients $A_{n_1n_2}$ and $B_n$.

Moreover, the resulting operator in the algebra
$\C[\varphi_{-l},\dots,\varphi_k]$ can be further reduced modulo the
relation
$$
\sum_{-l\le n\le k}\varphi_n=1,
$$
and then it coincides with the differential operator $D_{k+l}^{(a,b)}$ from Theorem
\ref{A.1}, where $a=z'-k$, $b=w'-l$. Here we use the fact that quotient algebra
$$
\C[\varphi_{-l},\dots,\varphi_k]/(\varphi_{-l}+\dots+\varphi_k=1)
$$
can be identified with the algebra of polynomial functions on the simplex
$\Om(k,l)$.

\end{document}